\newtheorem{theorem}{Theorem}
\theoremstyle{plain}
\newtheorem{corollary}{Corollary}
\newtheorem{example}{Example}
\newtheorem{lemma}{Lemma}
\newtheorem{proposition}{Proposition}
\newtheorem{remark}{Remark}
\numberwithin{equation}{section}
\def\CC{\mathbb C}
\begin{document}
\title[Semigroup Operator Algebras and Quantum Semigroups]{Semigroup Operator Algebras and Quantum Semigroups}
\author{Aukhadiev Marat}
\email[Aukhadiev Marat]{m.aukhadiev@gmail.com}
\author{Grigoryan Suren }
\email[Grigoryan Suren]{gsuren@inbox.ru}%
\author{Lipacheva Ekaterina}
\email[Lipacheva Ekaterina]{elipacheva@gmail.com}%

\address[Aukhadiev M.A., Grigoryan S.A. and Lipacheva E.A.]
{420066, Russia, Kazan,  \newline%
\indent Krasnoselskaya str., 51\newline%
\indent Kazan State Power Engineering University}%
\date{March, 2013}
\subjclass{Primary 46L05, 16W30; Secondary 46L87, 46L65} %
\keywords{$C^\ast$-algebra, compact quantum semigroup, Haar functional, Toeplitz algebra, isometric representation, inverse representation, Banach algebra, weak Hopf algebra}%

\begin{abstract}
A detailed study of the semigroup $C^\ast$-algebra is presented. This $C^\ast$-algebra appears as a ``deformation'' of the continuous functions algebra on a compact abelian group. Considering semigroup $C^\ast$-algebras in this framework we construct a compact quantum semigroups category. Then the initial group is a compact  subgroup of the new compact quantum semigroup, the natural action of this group is described. The dual space of such $C^\ast$-algebra is endowed with Banach *-algebra structure, which contains the algebra of measures on a compact group. The dense weak Hopf *-algebra is given. It is shown that the constructed category of compact quantum semigroups can be embedded to the category of abelian semigroups.
\end{abstract}
\maketitle

\tableofcontents

\section{Introduction}
  
  There exist two different approaches to quantization -- algebraic and topological. The first one is based on deformation of universal enveloping algebras. This approach takes its roots in the early works by V.G. Drinfeld \cite{Drinfeld}.   The theory of compact quantum groups and semigroups is a part of the second approach, which started with S.L. Woronowicz's works \cite{Woronowicz2} and L.L. Vaksman and Ya.S. Soibel'man. It was shown later in \cite{RTF} that these two approaches are equivalent. Let us explain in short the process of quantization in the $C^\ast$-algebras framework. 
  
   Let $P$ be a compact semigroup, i.e. a Hausdorff compact space with continuous associative operation $(x,y)\to xy$. Denote by $C(P)$ the algebra of continuous functions on $P$. Then $C(P)$ is a commutative unital $C^\ast$-algebra, which encodes all the topological information about the space $P$. We identify $C(P\times P)$ with $C(P)\otimes C(P)$. In what follows symbol $\otimes$ denotes minimal $C^\ast$-tensor product. Define a map $\Delta\colon C(P)\to C(P)\otimes C(P)$:
  $$\Delta(f)(x,y)=f(xy).$$
   Obviously, $\Delta$ is a continuous unital *-homomorphism. The associativity of multiplication in $P$ is encoded in the following property of $\Delta$, called \emph{coassociativity}:
 $$ (\Delta\otimes\mathrm{id})\Delta=(\mathrm{id}\otimes\Delta)\Delta$$
  Thus, all the information about the compact group $P$ is encoded in a pair $(C(P),\Delta)$. 
   
   Now, let $\mathcal{A}$ be a commutative unital $C^\ast$-algebra. Then, by Gelfand-Naimark theorem, $\mathcal{A}$ is isomorphic to the algebra of all continuous functions $C(P)$ on some compact Hausdorff space $P$. If $\mathcal{A}$ is endowed with a *-homomorphism $\Delta\colon \mathcal{A}\to \mathcal{A}\otimes \mathcal{A}$, satisfying the coassociativity condition, then equation $f(xy)=\Delta(f)(x,y)$ endowes a semigroup structure on $P$.
   
    Quantization means a passage from the commutative algebra $C(P)$ to a noncommutative unital $C^\ast$-algebra $\mathcal{A}$. Due to noncommutativity this algebra does not corresppond to any semigroup. Nevertheless, algebra $\mathcal{A}$ can be regarded as an algebra of continuous functions on some virtual compact geometrical object. This object is called a \emph{quantum space}. In other words, quantum space is an object of a category, dual to the category of $C^\ast$-algebras (see \cite{Woronowicz4} for details). Usually quantum space correspoding to $C^\ast$-algebra $\mathcal{A}$ is denoted by $\mathcal{QS(A)}$ \cite{Soltan}.
    
     A unital *-homomorphism $\Delta\colon \mathcal{A}\to \mathcal{A}\otimes \mathcal{A}$ satisfying coassociativity condition is called \emph{comultiplication}. Analogously to the classical case, $\Delta$ turns quantum space $\mathcal{QS(A)}$ into a quantum semigroup. Then the algebra $\mathcal{A}$ with its comultiplication represents \emph{an algebra of functions on a quantum semigroup} \cite{Tuset}. The pair $(\mathcal{A},\Delta)$ is called \emph{a compact quantum semigroup} \cite{Vandaele3}.
      
    Compact quantum semigroup $(\mathcal{A},\Delta)$ is called \emph{commutative}, if $\sigma\Delta=\Delta$, where $\sigma\colon \mathcal{A}\otimes \mathcal{A}\to \mathcal{A}\otimes \mathcal{A}$ is a flip: $\sigma(a\otimes b)=b\otimes a$.
      
   By definition of S.L. Woronowicz \cite{Woronowicz}, the compact quantum semigroup $(\mathcal{A},\Delta)$ is a compact quantum group if each of the sets
   $$ (1\otimes \mathcal{A})\Delta(\mathcal{A}),\ (\mathcal{A}\otimes 1)\Delta(\mathcal{A})$$
    is linear dense in $\mathcal{A}\otimes \mathcal{A}$. In the classical case (when $\mathcal{A}$ is commutative) $(\mathcal{A},\Delta)$ corresponds to an ordinary group, the density conditions of the above-mentioned sets corresponds to  existense of inverse elements in $P$.
    
  At the moment the quantum groups theory is widely represented (see \cite{Woronowicz1}, \cite{Vandaele2} \cite{Vandaele3}, \cite{Podles}). Most of non-trivial examples are constructed by deformation of Hopf *-algebras generated by classical groups, and then taking a closure. Futher in each case it should be proved that the comultiplication on the Hopf algebra extends to $C^\ast$-algebra.
  
 On the other hand,  problem of constructing a compact quantum semigroup on a concrete $C^\ast$-algebra remains open. This problem is unsolved even for a well-known Toeplitz algebra. 
 
  There are not so many works devoted to the theory of quantum semigroups. The problem of giving a nontrivial structure of a compact quantum semigroups on a concrete $C^\ast$-algebra has been actively studied only the last years. For example, in paper \cite{Kawamura} the comultiplication is defined an a direct sum of Cuntz algebras.
  
  This work is devoted to the construction of compact quantum semigroups on semigroup $C^\ast$-algebras, which are generated by the ``deformed'' representations of commutative $C^\ast$-algebra of continuous functions on a compact abelian group $G$ by multiplicators on $L^2(G)$. Such algebras form a class of noncommutative $C^\ast$-algebras, close in their characteristics to $C(G)$.
   
   The paper consists of two sections. The first is devoted to properties of a $C^\ast$-algebra $C^*_{red}(S)$. Some basic facts of this section can be found in \cite{Sal}. The second section is devoted to the cunstruction and investigation of properties of compact quantum semigroup on the $C^\ast$-algebra $C^*_{red}(S)$.
   
   \section{A $C^\ast$-algebra $C^*_{red}(S)$}

 \subsection{Preliminaries}

   Let $\Gamma$ be an additive discrete torsion-free group. In what follows $S$ denotes a semigroup, which generates group $\Gamma$, i.e.  $$\Gamma=\{a-b|\ a,b\in S \}.$$
   Unless otherwise stated, we suppose that $S$ contains 0 -- a neutral element of the group $\Gamma$.  We endow $S$ with a quasi-order: $a \prec b$, if $b=a+c$, where $c\in S$. This quasi-order is called \emph{natural}. 
   
   Quasi-order is called \emph{an order}, if $a\prec b$ and $b\prec a$ imply $a=b$. Quasi-oder is an order if and only if $S$ does not contain any non-trivial group.
   
   The order on $S$ is called \emph{total}, if for any elements $a$ and $b$, $a\prec b$ or $b\prec a$. In case the natural order on $S$ is total, $\Gamma=S\bigcup (-S)$, where $-S$ is a set of oppposite elements to the elements of $S$, and $S\bigcap(-S)=\{0\}$.

   Let $Is(H)$ be a semigroup of all isometric operators on a Hilbert space $H$. 
   
   A semigroup homomorphism  $\pi\colon S\to Is(H_\pi)$ is called  \emph{an isometric representation} of semigroup $S$, where $H_\pi$ is a Hilbert space. In what follows the image if an element $a\in S$ in $Is(H_\pi)$ is denoted by $T_\pi(a)$. Let $C_\pi(S)$ be a closed $C^\ast$-subalgebra in $B(H_\pi)$, generated by operators  $T_\pi(a)$ and $T_\pi(a)^*$. 
   
   Representation $\pi\colon S\to Is(H_\pi)$ is called \emph{irreducible}, if algebra $C_\pi(S)$ is irreducible on $H_\pi$. Two representations $\pi_1\colon S\to Is(H_{\pi_1})$ and $\pi_2\colon S\to Is(H_{\pi_2})$ generate \emph{canonically isomorphic } $C^\ast$-algebras if there exists an *-isomorphism $\tau\colon C_{\pi_1}(S)\to C_{\pi_2}(S)$ such that the following diagram is commutative.
  \begin{equation}
     \begin{array}{ccccccccc}
      S& \stackrel{\pi_1}{\longrightarrow}& C_{\pi_1}(S) \\
     \downarrow\lefteqn{\pi_2}&\swarrow\lefteqn{\tau}\\
     C_{\pi_2}(S)
     \end{array} \end{equation}
      
   Two representations $\pi_1\colon S\to Is(H_{\pi_1})$ and $\pi_2\colon S\to Is(H_{\pi_2})$ are \emph{unitarily equivalent}, if there exists a unitary operator $U\colon H_{\pi_1}\to H_{\pi_2}$ such that $\pi_1=U\pi_2U^*$.
   
    A representation of semigroup $S$ on $l^2(S)$ by a shift operator $a\to T_a$ is called \emph{a regular representation}. A $C^\ast$-algebra generated by this representation is denoted $C^*_{red}(S)$.
    
   Coburn \cite{Coburn} proved that all isometric non-unitary representations of semigroup $\mathbb{Z}_+$ generate canonically isomorphic $C^\ast$-algebras. Later Douglas \cite{Douglas} and Murphy \cite{Murphy1} generalized this result for totally ordered semigroups. Then the following question seems natural: \emph{which conditions on the semigroup $S$ are equivalent to the property that any isometric representations generate canonically isomorphic $C^\ast$-algebras?} 
   
   \begin{theorem} \cite{AT}
   \label{usl} The following conditions are equivalent.
   \begin{enumerate}
   \item all isometric non-unitary representations of $S$ generate canonically isomorphic $C^\ast$-algebras,
   \item the natural order is total on $S$.
   \end{enumerate}
   \end{theorem}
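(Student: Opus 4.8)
The plan is to prove the two implications separately, with the direction $(2)\Rightarrow(1)$ being essentially the Coburn--Douglas--Murphy circle of ideas applied in this generality, and $(1)\Rightarrow(2)$ proceeding by contraposition. For $(2)\Rightarrow(1)$, assume the natural order on $S$ is total, so that $\Gamma = S\cup(-S)$ with $S\cap(-S)=\{0\}$. Given an isometric non-unitary representation $\pi\colon S\to Is(H_\pi)$, the first step is to show that the regular representation $a\mapsto T_a$ on $\ell^2(S)$ factors appropriately, i.e. that there is a well-defined surjective $*$-homomorphism $\tau\colon C^*_{red}(S)\to C_\pi(S)$ sending $T_a\mapsto T_\pi(a)$. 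One direction of this (existence of a map $C_\pi(S)\to C^*_{red}(S)$, or a universal $C^*(S)$ surjecting onto both) follows from general nonsense about isometric representations; the substance is that for totally ordered $S$ the map goes the other way too, so it is an isomorphism. Concretely I would exhibit, for each non-unitary $\pi$, a copy of the rank-one projection $1 - T_\pi(a)T_\pi(a)^*$ being nonzero for $a\neq 0$, use totality of the order to get a Wick-ordering / normal form $T_\pi(a)^*T_\pi(b)$ for arbitrary products (this is where $\Gamma = S\cup(-S)$ is used: any $b-a\in\Gamma$ lies in $S$ or in $-S$, so every monomial reduces to the form $T_\pi(x)T_\pi(y)^*$ or $T_\pi(x)^*T_\pi(y)$), and then run a faithfulness argument: the commutator ideal, i.e. the closed ideal generated by the $1-T_\pi(a)T_\pi(a)^*$, is mapped faithfully because those projections generate a copy of the compacts (on the GNS-type space built from a "vacuum" vector), and the quotient is $C(\hat\Gamma)=C^*(\Gamma)$ on both sides via the unitarization, where the representation is determined by Fourier analysis on the group. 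Commutativity of the triangle diagram in the definition is then immediate from $\tau(T_a)=T_\pi(a)$.

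For $(1)\Rightarrow(2)$ I would argue contrapositively: suppose the natural order on $S$ is not total, and produce two isometric non-unitary representations generating non-isomorphic $C^*$-algebras. Non-totality means there exist $a,b\in S$ that are $\prec$-incomparable, equivalently $a-b\notin S$ and $b-a\notin S$. The idea is that the regular representation $C^*_{red}(S)$ then "sees" this incomparability --- for instance $T_a^*T_b = 0$ or at least $T_a^*T_b$ is not a shift-type operator, reflecting that the cones $a+S$ and $b+S$ are "independent" --- while one can build another non-unitary isometric representation in which the analogous product is a nonzero isometry or behaves differently (e.g. by inducing from a representation of a totally ordered subsemigroup containing both $a$ and $b$ in a common larger positive cone, or by using a representation associated to a different order on $\Gamma$ compatible with $S$). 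The invariant distinguishing them can be taken to be a $K$-theoretic one, or more elementarily the structure of the lattice of projections generated by the $T_\pi(x)T_\pi(x)^*$, or the dimension of the "defect space" $\bigcap_{a\in S,a\neq 0}\ker T_\pi(a)^*$ relative to how the generators act on it.

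The main obstacle is the faithfulness half of $(2)\Rightarrow(1)$: showing that the canonical surjection $\tau\colon C^*_{red}(S)\to C_\pi(S)$ is injective for an \emph{arbitrary} isometric non-unitary $\pi$, not just the regular one. The key technical device is an analogue of Coburn's exact sequence $0\to\mathcal K\to C^*_{red}(\mathbb Z_+)\to C(\mathbb T)\to 0$: one needs that for totally ordered $S$ the commutator ideal of $C^*_{red}(S)$ is "essentially simple" or at least that $\tau$ restricted to it is injective, which typically follows by showing that ideal acts irreducibly (or is a continuous-trace / elementary algebra) on a distinguished cyclic subspace, plus the fact that $C_\pi(S)/(\text{commutator ideal}) \cong C^*(\Gamma)$ is faithful because $\Gamma$ is abelian (amenable), so no cancellation can occur on the quotient level. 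Assembling this --- getting a five-lemma-style diagram chase between the two exact sequences for $C^*_{red}(S)$ and $C_\pi(S)$ with the group $C^*$-algebra on the right and the ideal on the left --- is the crux; once the exact sequences are in place and both side-maps ($\mathcal{I}\to\mathcal{I}_\pi$ on ideals, $C^*(\Gamma)\to C^*(\Gamma)$ the identity on quotients) are shown to be isomorphisms, the middle map $\tau$ is forced to be an isomorphism and the triangle commutes by construction.
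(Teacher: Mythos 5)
The paper does not actually prove this theorem: it is quoted verbatim from the reference \cite{AT}, so there is no in-paper argument to compare yours against; I can only judge the proposal on its own terms. Your overall architecture is the right one (the Coburn--Douglas--Murphy two-exact-sequences strategy for $(2)\Rightarrow(1)$, contraposition for $(1)\Rightarrow(2)$), but the crux of $(2)\Rightarrow(1)$ rests on a claim that is false in the stated generality. The defect projection $1-T_\pi(a)T_\pi(a)^*$ is \emph{not} rank one and the defect projections do \emph{not} generate a copy of the compacts for a general totally ordered $S$: already for $S=\mathbb{Q}_+$ (or for $\mathbb{Z}_+^2$ with the lexicographic order) the set $S\setminus(a+S)$ is infinite, the commutator ideal of $C^*_{red}(S)$ is not an elementary algebra, and $K(\ell^2(S))$ is not even contained in $C^*_{red}(S)$. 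What is actually needed --- and what constitutes the entire difficulty of the Douglas/Murphy theorems --- is that the commutator ideal of $C^*_{red}(S)$ is simple (equivalently, that $\tau$ is injective on it whenever all defect projections of $\pi$ are nonzero); your sketch replaces this by the phrase ``typically follows by showing that ideal acts irreducibly,'' which is not a proof and, as stated via the compacts, would prove something false. A related unaddressed point: you must also show that for totally ordered $S$ a non-unitary isometric representation has \emph{all} defect projections nonzero (otherwise the left-hand vertical map in your five-lemma diagram need not be injective), and this requires an argument, not just the hypothesis that $\pi$ is non-unitary.

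In $(1)\Rightarrow(2)$ the idea is correct but the second representation is never actually produced. The clean construction is: since the natural quasi-order fails to be total, pick incomparable $a,b$, extend the partial order determined by $S$ to a total order on the torsion-free abelian group $\Gamma$ with positive cone $\Gamma_+\supseteq S$ (Levi's theorem), and let $\pi$ act by translations on $\ell^2(\Gamma_+)$. Then, with $b-a\in\Gamma_+$ say, $T_\pi(a)^*T_\pi(b)$ is an isometry, whereas in the regular representation $T_a^*T_b$ kills $e_0$ (because $b-a\notin S$), so $(T_a^*T_b)^*(T_a^*T_b)\neq I$; a canonical $*$-isomorphism would force $(T_a^*T_b)^*(T_a^*T_b)=I$, a contradiction. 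This elementary relation-counting replaces your proposed $K$-theoretic or projection-lattice invariants, which are both overkill and unverified. (One must also dispose separately of the case where $S$ contains a nontrivial subgroup, since then no total order extension with positive cone containing $S$ exists.) As it stands, the proposal is a plausible roadmap but not a proof: the simplicity of the commutator ideal and the explicit second representation are the two missing loads-bearing pieces.
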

     
    We call operators  of type $T_\pi(a)$ and $T_\pi(a)^*$ \emph{elementary monomials}, and finite product of elementary monomials is called \emph{a monomial}. A set of all monomials forms a semigroup, which we denote by $S_\pi^*$. A representation $\pi\colon S\to Is(H_\pi)$ is called \emph{an inverse representation} of the semigroup $S$, if $S_\pi^*$ is an inverse semigroup with respect to *-operation. Recall that a semigroup is called \emph{inverse} (or \emph{a generalized group}), if it contains with each element $x$ a unique $x^*$, called \emph{inverse} to $x$, such that  $$xx^*x=x,\ x^*xx^*=x^*.$$

   \begin{theorem}
   \label{is}  The following conditions are equivalent.
   \begin{enumerate}
   \item The natural order is total on $S$,
   \item For any non-unitary representations $\pi_1\colon S\to Is(H_{\pi_1})$ and $\pi_2\colon S\to Is(H_{\pi_2})$ there exist *-isomorphisms between semigroups $S_{\pi_1}^*$ and $S_{\pi_2}^*$.
   \end{enumerate}
   \end{theorem}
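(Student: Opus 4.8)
The plan is to derive the equivalence from Theorem \ref{usl} by showing that the two "isomorphism" notions are essentially the same thing at the level of the generators and their adjoints. Assume first that the natural order is total on $S$. By Theorem \ref{usl}, for any two non-unitary isometric representations $\pi_1$ and $\pi_2$ the $C^\ast$-algebras $C_{\pi_1}(S)$ and $C_{\pi_2}(S)$ are canonically isomorphic via a $*$-isomorphism $\tau$ making the diagram commute, i.e. $\tau\bigl(T_{\pi_1}(a)\bigr)=T_{\pi_2}(a)$ for all $a\in S$. Since $\tau$ is a $*$-homomorphism it also satisfies $\tau\bigl(T_{\pi_1}(a)^*\bigr)=T_{\pi_2}(a)^*$, hence $\tau$ carries every elementary monomial of $\pi_1$ to the corresponding elementary monomial of $\pi_2$, and being multiplicative it carries every monomial to a monomial. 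Therefore $\tau$ restricts to a map $S_{\pi_1}^*\to S_{\pi_2}^*$; it is surjective because the generating monomials of $S_{\pi_2}^*$ are in the image, and injective because $\tau$ is injective on $C_{\pi_1}(S)$. Compatibility with the $*$-operation is immediate from the fact that $\tau$ is a $*$-homomorphism. This gives the required $*$-isomorphism $S_{\pi_1}^*\cong S_{\pi_2}^*$, which is direction $(1)\Rightarrow(2)$.

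For the converse $(2)\Rightarrow(1)$ I would argue by contraposition, mirroring the structure of the proof of Theorem \ref{usl}. Suppose the natural order on $S$ is not total. Then there exist $a,b\in S$ that are $\prec$-incomparable, and one exhibits two non-unitary representations for which the semigroups of monomials cannot be $*$-isomorphically identified in a way consistent with the labelling by $S$. The natural candidates are the regular representation $\pi_1$ on $\ell^2(S)$ and a second representation $\pi_2$ built so that the relations among $T_{\pi_2}(a),T_{\pi_2}(b)$ and their adjoints differ from those in $S_{\pi_1}^*$ — for instance, a representation in which the Wiener–Hopf-type relation $T(a)^*T(b)=T(b)T(a)^*$ (valid when $a,b$ are comparable) fails, or one for which a product of monomials that is nonzero in $S_{\pi_1}^*$ becomes zero. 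Concretely, incomparability of $a$ and $b$ means $T_{\pi_1}(a)^*T_{\pi_1}(b)$ acts on $\ell^2(S)$ in a way governed by the set $(a+S)\cap(b+S)$, and one chooses $\pi_2$ (e.g. a one-dimensional or finite-dimensional isometric-by-unitary perturbation, or a representation on a subshift) so that the analogous element has a different idempotent/zero structure; since a $*$-isomorphism $S_{\pi_1}^*\to S_{\pi_2}^*$ respecting the generators $a\mapsto T_{\pi_2}(a)$ would have to preserve all monomial identities, no such isomorphism exists.

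The main obstacle is this second direction: one must produce, from mere non-totality, an explicit pair of representations whose monomial semigroups are provably non-isomorphic, and verify that the distinguishing feature (a specific relation or zero-divisor among monomials) genuinely survives in one representation and fails in the other. This is exactly the type of semigroup-combinatorial computation underlying the hard direction of Theorem \ref{usl}, and I expect the cleanest route is to reuse that theorem's construction verbatim: the representations witnessing the failure of canonical $C^\ast$-isomorphism there are isometric, so their monomial semigroups $S_{\pi_i}^*$ are already in hand, and one checks that a $*$-isomorphism between them compatible with the $S$-labelling would extend by linearity and continuity to a canonical $C^\ast$-isomorphism of $C_{\pi_1}(S)$ and $C_{\pi_2}(S)$ — contradicting Theorem \ref{usl}. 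Thus $(2)\Rightarrow(1)$ reduces to the observation that a $*$-isomorphism of the monomial semigroups, being linear-extendable onto the dense $*$-subalgebras of $C_{\pi_1}(S)$ and $C_{\pi_2}(S)$, upgrades to a canonical $C^\ast$-isomorphism, which closes the loop.
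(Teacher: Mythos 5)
The paper itself states Theorem \ref{is} without proof (it sits alongside Theorem \ref{usl}, which is imported from \cite{AT}), so there is no in-paper argument to compare against; your proposal has to stand on its own. Your direction $(1)\Rightarrow(2)$ does: totality gives, via Theorem \ref{usl}, a canonical $*$-isomorphism $\tau$ with $\tau(T_{\pi_1}(a))=T_{\pi_2}(a)$, and since $\tau$ is multiplicative and $*$-preserving it sends monomials to monomials bijectively, restricting to a $*$-isomorphism $S_{\pi_1}^*\to S_{\pi_2}^*$. That part is complete.

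The direction $(2)\Rightarrow(1)$ has genuine gaps. First, your closing ``upgrade'' step --- that a $*$-isomorphism $S_{\pi_1}^*\to S_{\pi_2}^*$ extends ``by linearity and continuity'' to a canonical isomorphism $C_{\pi_1}(S)\to C_{\pi_2}(S)$, contradicting Theorem \ref{usl} --- is exactly the hard point and is asserted, not proved. Monomials are not linearly independent, and a linear relation among monomials valid in one representation need not hold in the other, so the linear extension may fail to be well defined; and even if it is a well-defined $*$-homomorphism on the dense span of monomials, that span is not a $C^\ast$-algebra, so boundedness (hence extendability to the closures) is not automatic. Second, condition $(2)$ as stated only asserts the existence of \emph{some} $*$-isomorphism of the semigroups, not one respecting the labelling $T_{\pi_1}(a)\mapsto T_{\pi_2}(a)$; your contrapositive only excludes label-respecting isomorphisms, so either you must argue that any abstract $*$-isomorphism can be corrected to a canonical one, or you must read $(2)$ as implicitly canonical (plausible given the analogy with Theorem \ref{usl}, but it should be said). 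Third, you never actually produce the witnessing pair of representations: from non-totality one should exhibit concretely, say for incomparable $a,b$, the regular representation (where $T_a^*T_b$ is a proper partial isometry and $T_aT_a^*,\,T_bT_b^*$ are non-comparable projections) against a representation in which the corresponding monomial degenerates (e.g.\ $T_{\pi_2}(a)^*T_{\pi_2}(b)=I$), and then verify that the resulting semigroups are not $*$-isomorphic --- for instance by comparing their idempotent structure. As written, the second direction is a plan rather than a proof.
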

   
  In \cite{AT} it was shown that each regular representation is inverse. Therefore, \emph{there exists at least one inverse representation of $S$}. 
   
    \subsection{Algebra $C^*_{red}(S)$}\label{algebra}
   
   Recall that $$l^2(S)=\{f\colon S\to\CC| \sum\limits_{a\in S} \mid f(a)\mid^2<\infty \}$$
   is a Hilbert space with respect to scalar product
   $$(f,g)=\sum\limits_{a\in S}f(a)\overline{g(a)}.$$
   A set of functions $\{e_a \}_{a\in S}$, $e_a(b)=\delta_{a,b}$ forms an orthonormal basis in $l^2(S)$. We define a representation of $S$ by a map $a\to T_a$, where $T_ae_b=e_{a+b}$ for any $b\in S$. $C^\ast$-algebra, generated by operators $T_a$ and $T_b^*$ for all $a,b\in S$ is denoted by $C^*_{red}(S)$ and called \emph{a reduced semigroup $C^\ast$-algebra of the semigroup $S$}. We denote by $S_{red}^*$ a semigroup of monomials, generated by this representation, and by $P(S)$ an algebra of finite linear combinations of such monomials.
   
     Semigroup $S$ is a net with respect to the natural order defined in previous section: for any $a$ and $b$ in $S$ there exists $c$ such that $a\prec c$ or $b\prec c$. 
     
      \begin{lemma}
      \label{mon} Let $V$ be a monomial in $S_\pi^*$, where $\pi\colon S\to Is(H_\pi)$ is a representation. Then there exist $a,b\in S$ such that
      $$\lim\limits_{c\in S} T_\pi(c)^*V T_\pi(c)=T_\pi(a)^*T_\pi(b). $$
      
      \end{lemma}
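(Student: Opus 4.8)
The plan is to show that $T_\pi(c)^*VT_\pi(c)$ becomes \emph{constant} once $c$ is large in the net $(S,\prec)$, and that its eventual value automatically has the stated form $T_\pi(a)^*T_\pi(b)$. Write $V=X_1X_2\cdots X_n$, where each $X_i$ is an elementary monomial, $X_i=T_\pi(d_i)$ or $X_i=T_\pi(d_i)^*$ with $d_i\in S$; set $\varepsilon_i=+1$ in the first case and $\varepsilon_i=-1$ in the second, and put $\deg V:=\sum_i\varepsilon_id_i\in\Gamma$. Two elementary identities drive everything. First, if $d\prec c$, say $c=d+c'$ with $c'\in S$, then $T_\pi(c)=T_\pi(d)T_\pi(c')$, hence $T_\pi(c)^*T_\pi(d)=T_\pi(c')^*T_\pi(d)^*T_\pi(d)=T_\pi(c-d)^*$, using $T_\pi(d)^*T_\pi(d)=1$. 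Second, for any $d\in S$, $T_\pi(c)^*T_\pi(d)^*=(T_\pi(d)T_\pi(c))^*=T_\pi(c+d)^*$.

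I would then ``push'' the outer factor $T_\pi(c)^*$ rightwards through $X_1,\dots,X_n$. By induction on $i$, using the two identities, $T_\pi(c)^*X_1\cdots X_i=T_\pi(\gamma_i)^*$ where $\gamma_0=c$ and $\gamma_i=\gamma_{i-1}-\varepsilon_id_i$; the $i$-th step is legitimate provided $\gamma_{i-1}\succ d_i$ when $\varepsilon_i=+1$ (no condition is needed when $\varepsilon_i=-1$, and then $\gamma_i=\gamma_{i-1}+d_i$ lies in $S$ automatically). Each condition is of the form $c\succ g$ for a fixed $g=d_i+(\varepsilon_1d_1+\dots+\varepsilon_{i-1}d_{i-1})\in\Gamma$. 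Here the hypothesis $\Gamma=\{a-b\mid a,b\in S\}$ is used: writing each such $g$ as $s-t$ with $s,t\in S$, for any target $y\in S$ the element $c=(\text{sum of all the }s)+y$ lies in $S$, satisfies $c\succ y$, and satisfies every condition $c\succ g$; moreover the set of good $c$ is upward closed by transitivity of $\prec$. Hence the good $c$'s form a cofinal, upward-closed subset $C_V\subseteq S$, and for $c\in C_V$,
\[
T_\pi(c)^*VT_\pi(c)=T_\pi(\gamma_n)^*T_\pi(c),\qquad \gamma_n=c-\deg V\in S,
\]
which already has the required form with $a=c-\deg V$, $b=c$.

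Finally I would check that this operator is independent of $c\in C_V$, so that the net actually converges. For $e\in S$, using $T_\pi(x+e)=T_\pi(x)T_\pi(e)=T_\pi(e)T_\pi(x)$ together with $T_\pi(e)^*T_\pi(e)=1$,
\[
T_\pi\!\big((c+e)-\deg V\big)^{*}T_\pi(c+e)=T_\pi(c-\deg V)^{*}T_\pi(e)^{*}T_\pi(e)T_\pi(c)=T_\pi(c-\deg V)^{*}T_\pi(c).
\]
Since any two elements of $C_V$ have a common upper bound in $S$, which again lies in $C_V$, the assignment $c\mapsto T_\pi(c)^*VT_\pi(c)$ is constant on $C_V$, equal to some $L=T_\pi(a)^*T_\pi(b)$; and because $C_V$ is cofinal and upward closed, fixing any $c_0\in C_V$ we get $c\in C_V$ for every $c\succ c_0$, so the net is eventually equal to $L$ and converges to $L$ in norm (a fortiori in the strong and weak operator topologies). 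The case $V=1$, and the induction's base $i=0$, are immediate.

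The main obstacle is organisational rather than conceptual: correctly recording, as $c$ ranges over $S$, the finitely many admissibility conditions $\gamma_{i-1}\succ d_i$ generated by the pushing procedure, and verifying that together they carve out a subset of $S$ that is \emph{both} cofinal \emph{and} upward closed. It is the upward closedness that promotes ``eventually constant along a cofinal subset'' to genuine convergence of the net, and it is precisely the identity $\Gamma=\{a-b\mid a,b\in S\}$ that makes $\{c\in S\mid c\succ g\}$ cofinal for an arbitrary $g\in\Gamma$. No finer information about $S$ (such as totality of the natural order) is required.
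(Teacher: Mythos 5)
Your proof is correct and follows essentially the same route as the paper: conjugate by a sufficiently large $c$ so that the relations $T_\pi(d)^*T_\pi(d)=I$ and $T_\pi(d)T_\pi(k)=T_\pi(d+k)$ collapse $V$ to the normal form $T_\pi(a)^*T_\pi(b)$, and then observe that further conjugation by any $T_\pi(k)$ leaves this operator fixed. Your version is merely more meticulous than the paper's about exactly which $c$ are admissible (the upward-closed cofinal set $C_V$) and why eventual constancy there yields convergence of the net.
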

      \begin{proof}
      Suppose $V$ is a multiple of $ \{T_\pi(a_i)^*\}_{i=1}^n$ and $\{T_\pi(b_j)\}_{j=1}^m$. Take $c=a+b$, where $a=\sum\limits_{i=1}^{n}a_i$, $b=\sum\limits_{j=1}^{m}b_j$. 
      Then relations $T_\pi(d)^*T_\pi(d)=I$ and $T_\pi(d)T_\pi(k)=T_\pi(d+k)$ imply
      $$T_\pi(c)^*VT_\pi(c)=T_\pi(a)^*T_\pi(b).$$
      Now let $c\prec d$, i.e. $d=c+k$ for some $k\in S$. Then
      $$T_\pi(d)VT_\pi(d)=T_\pi(k)^*(T_\pi(c)^*VT_\pi(c))T_\pi(k)=$$
      $$T_\pi(k)^*T_\pi(a)^*T_\pi(b)T_\pi(k)=T_\pi(a)^*T_\pi(b).$$
      And we have the required relation.
      \end{proof}
      
      Using this result we introduce the following notion. In conditions of Lemma \ref{mon}, we call $b-a\in\Gamma$ \emph{an index of monomial} $V$, denoted by $\mathrm{ind}V=b-a$.
      Note that this definition of index coincides with the same notion for a regular representation introduced in \cite{Sal}. The following statement follows from Lemma \ref{mon}.
      \begin{lemma}
      $\mathrm{ind}(V_1\cdot V_2)=\mathrm{ind} V_1+\mathrm{ind}V_2$.
      \end{lemma}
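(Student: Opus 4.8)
The plan is to extract an explicit formula for $\mathrm{ind}$ from the proof of Lemma \ref{mon} and then observe that this formula is visibly additive under concatenation of monomials. First I would record the following sharpening of Lemma \ref{mon}: if a monomial $V\in S_\pi^*$ is written as a product of elementary monomials involving the factors $T_\pi(a_1)^*,\dots,T_\pi(a_n)^*$ and $T_\pi(b_1),\dots,T_\pi(b_m)$ in some order, then, taking $c=\sum_i a_i+\sum_j b_j$ exactly as in that proof, one has $T_\pi(c)^*VT_\pi(c)=T_\pi(a)^*T_\pi(b)$ with $a=\sum_i a_i$ and $b=\sum_j b_j$, and (again as in that proof) this value is unchanged when $c$ is enlarged. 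Hence
$$\mathrm{ind}V=b-a=\sum_{j=1}^m b_j-\sum_{i=1}^n a_i,$$
that is, the index of a monomial is the sum of the arguments of its ``creation'' factors $T_\pi(\cdot)$ minus the sum of the arguments of its ``annihilation'' factors $T_\pi(\cdot)^*$, computed from any presentation of $V$ as a product of elementary monomials.

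Next I would fix presentations of $V_1$ and $V_2$ as products of elementary monomials, with ``creation/annihilation'' argument sums $(b^{(1)},a^{(1)})$ and $(b^{(2)},a^{(2)})$ respectively, so that $\mathrm{ind}V_1=b^{(1)}-a^{(1)}$ and $\mathrm{ind}V_2=b^{(2)}-a^{(2)}$ by the previous step. The product $V_1V_2$ is then presented by the concatenation of these two words, whose list of elementary factors is simply the concatenation of the two lists; so its creation argument sum is $b^{(1)}+b^{(2)}$ and its annihilation argument sum is $a^{(1)}+a^{(2)}$. Applying the formula above to $V_1V_2$ yields
$$\mathrm{ind}(V_1V_2)=(b^{(1)}+b^{(2)})-(a^{(1)}+a^{(2)})=\mathrm{ind}V_1+\mathrm{ind}V_2 .$$

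There is essentially no obstacle here; the one point that warrants a sentence of justification is that the quantities $\mathrm{ind}V_1$ and $\mathrm{ind}V_2$ are genuine invariants of the monomials and not artifacts of the chosen presentations. This is already implicit in Lemma \ref{mon}: the operator $T_\pi(a)^*T_\pi(b)=\lim_{c\in S}T_\pi(c)^*VT_\pi(c)$ is attached to $V$ intrinsically, so the only question is whether $b-a$ can be recovered from it; this is exactly the agreement with the index of \cite{Sal} noted after Lemma \ref{mon} and, for the regular representation, is seen by evaluating $T_a^*T_b$ on the basis vectors $e_d$ with $d$ sufficiently large. Granting this, the two displayed computations constitute the whole proof.
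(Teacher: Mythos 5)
Your proposal is correct and follows exactly the route the paper intends: the paper gives no proof beyond the remark that the statement ``follows from Lemma \ref{mon}'', and your argument is precisely the natural elaboration, reading off $\mathrm{ind}V=\sum_j b_j-\sum_i a_i$ from the computation in that lemma's proof and noting that concatenating presentations adds these sums. Your extra paragraph on well-definedness of the index across presentations addresses a point the paper itself glosses over, and does so adequately.
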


   \begin{lemma}\label{l441}
   \label{ind}Let $\mathrm{ind} V=a-b$ for a monomial $V$ in $S_{red}^*$. Then either $Ve_d=e_{d+(a-b)}$ or $Ve_d=0$.
   \end{lemma}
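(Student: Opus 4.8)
The plan is to reduce the statement to the explicit action of the two kinds of elementary monomials on the orthonormal basis $\{e_d\}_{d\in S}$ of $l^2(S)$. First I would record the relevant formulas: for $c\in S$ and $d\in S$ one has $T_c e_d=e_{c+d}$ by definition of the regular representation, while evaluating the scalar product gives $T_c^\ast e_d=e_{d-c}$ when $d-c\in S$ and $T_c^\ast e_d=0$ otherwise. In particular, each elementary monomial sends every basis vector either to a basis vector or to $0$.

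Next, write $V=E_k E_{k-1}\cdots E_1$ as a finite product of elementary monomials, where each $E_i$ equals $T_{c_i}$ or $T_{c_i}^\ast$ for some $c_i\in S$. Applying $V$ to $e_d$ and evaluating the factors from right to left, an immediate induction on $i$ using the previous step shows that every partial product $E_i\cdots E_1 e_d$ is either a basis vector $e_{d_i}$ with $d_i\in S$ or the zero vector, and that once $0$ occurs it propagates to the end. Hence $Ve_d$ is either $0$ or a basis vector $e_{d'}$ for some $d'\in S$, which already gives the dichotomy in the statement.

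It remains to identify $d'$ in the non-vanishing case. Suppose $Ve_d=e_{d'}\neq 0$. Then at every stage the intermediate vector was a genuine basis vector, so each factor $T_{c_i}$ shifted the subscript by $+c_i$ and each factor $T_{c_i}^\ast$ shifted it by $-c_i$ (the latter being legitimate precisely because the result stayed in $S$). Adding up these shifts in the abelian group $\Gamma$ — where the order of the factors is irrelevant — we get $d'=d+\bigl(\sum_{E_i=T_{c_i}}c_i-\sum_{E_i=T_{c_i}^\ast}c_i\bigr)$. By Lemma \ref{mon} and the definition of the index, the element in brackets is exactly $\mathrm{ind}V=a-b$, whence $d'=d+(a-b)$, as claimed.

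The computation is entirely routine; the only point that needs a little care is the bookkeeping in the last step, namely checking that the net subscript shift produced by a non-vanishing application of $V$ coincides with the index defined through the limit in Lemma \ref{mon}, independently of how the creation and annihilation factors $T_{c_i}$, $T_{c_i}^\ast$ are interleaved inside $V$. This is where I expect the only (minor) friction to lie.
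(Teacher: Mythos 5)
Your proof is correct and follows essentially the same route as the paper: both reduce the claim to the explicit action of $T_c$ and $T_c^\ast$ on the basis $\{e_d\}$ and then use Lemma \ref{mon} to identify the net subscript shift with $\mathrm{ind}\,V$. You merely spell out the factor-by-factor induction that the paper compresses into the observation that $T_c^\ast V T_c e_d = V e_d$ whenever $Ve_d\neq 0$.
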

   \begin{proof}
   Since $T_ae_b=e_{a+b}$, and $T_a^*e_b=e_{b-a}$ if $a\prec b$ or $T_a^*e_b=0$ in other case, we have
   $$\mbox{ if } Ve_d\neq 0, \mbox{ then } T_c^*VT_c e_d=Ve_d.$$
   Passing to the limit in $S$ we obtain $T_a^*T_b e_d=V e_d$. Therefore, $Ve_d=e_{d+a-b}$.
    \end{proof}
   \begin{remark}
   One can easily verify that due to Lemma \ref{ind}, monomials with zero index are projections, with eigenvectors in a basis $e_a, a\in S$. Therefore $S_{red}^*$ is an inverse semigroup, where passing to inverse element is realized by involution.
   \end{remark}
   
   Denote by $\mathfrak{A}_c$ a closed linear space in $C_{red}^*(S)$, generated by linear combinations of monomials with index $c$. 
   \begin{lemma}
   \label{acb} \begin{enumerate}
   \item $\mathfrak{A}_c\cdot\mathfrak{A}_b\subseteq\mathfrak{A}_{c+b}$,
   \item $\mathfrak{A}_0$ is a commutative $C^\ast$-algebra
   \end{enumerate}
   \end{lemma}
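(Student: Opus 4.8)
The plan is to reduce both assertions to the behaviour of individual monomials and then transport the conclusions to the norm-closures using joint continuity of multiplication.

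For part (1) I would argue as follows. If $V_1\in S_{red}^*$ has index $c$ and $V_2\in S_{red}^*$ has index $b$, then $V_1V_2$ is again a monomial (the set of monomials is closed under products), and by the preceding lemma $\mathrm{ind}(V_1V_2)=\mathrm{ind}V_1+\mathrm{ind}V_2=c+b$, so $V_1V_2\in\mathfrak{A}_{c+b}$. Consequently the product of a finite linear combination of index-$c$ monomials with a finite linear combination of index-$b$ monomials is a finite linear combination of index-$(c+b)$ monomials, hence lies in $\mathfrak{A}_{c+b}$. A general $x\in\mathfrak{A}_c$ and $y\in\mathfrak{A}_b$ are norm-limits $x=\lim x_n$, $y=\lim y_n$ of such finite combinations; since multiplication in $C^*_{red}(S)$ is jointly norm-continuous, $x_ny_n\to xy$, and since $\mathfrak{A}_{c+b}$ is norm-closed by definition we conclude $xy\in\mathfrak{A}_{c+b}$.

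For part (2) I would first check that $\mathfrak{A}_0$ is closed under the involution: if $V$ is a monomial of index $0$, then $V^*$ is again a monomial, and applying $\lim_{c}T_\pi(c)^*(\cdot)T_\pi(c)$ to $V^*$ gives $\mathrm{ind}V^*=-\mathrm{ind}V=0$, so $*$ maps the generating set of $\mathfrak{A}_0$ into itself and hence $\mathfrak{A}_0^*=\mathfrak{A}_0$. Combining this with part (1) in the case $c=b=0$ (so $\mathfrak{A}_0\cdot\mathfrak{A}_0\subseteq\mathfrak{A}_0$) and with the fact that $\mathfrak{A}_0$ is norm-closed, we get that $\mathfrak{A}_0$ is a $C^*$-subalgebra of $C^*_{red}(S)$. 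For commutativity I would invoke the Remark following Lemma \ref{ind}: a monomial $V$ of index $0$ satisfies $Ve_d\in\{e_d,0\}$ for every basis vector $e_d$, so it is the orthogonal projection onto the closed linear span of $\{e_d:Ve_d=e_d\}$, i.e. $V$ is diagonal in the orthonormal basis $\{e_a\}_{a\in S}$. Operators diagonal in a fixed orthonormal basis commute pairwise, so the $*$-algebra generated by the zero-index monomials is commutative, and commutativity passes to the norm-closure; thus $\mathfrak{A}_0$ is a commutative $C^*$-algebra.

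I do not expect a genuine obstacle here: all the substance is already contained in the index calculus (additivity of the index) and in Lemma \ref{ind} together with its Remark identifying zero-index monomials with diagonal projections. The only points demanding a little care are that products and adjoints of monomials are again monomials, so that the index calculus applies termwise, and that the passage from monomials to their closed linear spans is legitimate, which is exactly where joint norm-continuity of multiplication and continuity of the involution are used.
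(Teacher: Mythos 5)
Your proposal is correct and follows essentially the same route as the paper: part (1) is exactly the index-additivity argument for monomials extended by linearity and norm-continuity, and part (2) (which the paper dismisses as obvious) is filled in using precisely the paper's own Remark that zero-index monomials are projections diagonal in the basis $\{e_a\}_{a\in S}$, hence pairwise commuting. No gaps.
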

   \begin{proof}
   1) Take monomials $V$ and $W$ with corresponding indices $c$ and $b$. Then $\mathrm{ind} (V\cdot W)=c+b$. Consequently, $V\cdot W \in \mathfrak{A}_{c+b}$.
   
   Statement 2) is obvious.  \end{proof}
   
   Note that $\mathfrak{A}_c=T_b^* \mathfrak{A}_0T_a $, if $c=a-b$. Indeed, for $V$ in $\mathfrak{A}_c$ we have
   $$V=T_b^*T_bVT_a^*T_a. $$
   Therefore, $$c=\mathrm{ind}V= \mathrm{ind}T_b^*+\mathrm{ind}(T_bVT_a^*)+\mathrm{ind}T_a=$$
   $$=a-b+\mathrm{ind}T_bVT_a^*.$$ Consequently, $T_bVT_a^*\in \mathfrak{A}_0$.
   
   Denote by  $P_a\colon l^2(S)\to l^2(S)$ an orthogonal projection on space $\CC e_a$, $P_af=(f,e_a)e_a$. Define \emph{a conditional expectation} $Q\colon B(l^2(S))\to B(l^2(S))$, $Q(A)=\bigoplus\limits_{a\in S} P_aAP_a$, where $A\in B(l^2(S))$.
   
   \begin{lemma}
   \label{uslov} A restriction of $Q$ to $C_{red}^*(S)$ is a conditional expectation from $C_{red}^*(S)$ to $\mathfrak{A}_0$.
    \end{lemma}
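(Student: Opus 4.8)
The plan is to verify directly that the restriction $Q|_{C^*_{red}(S)}$ satisfies the three defining properties of a conditional expectation onto $\mathfrak{A}_0$: (i) it maps $C^*_{red}(S)$ into $\mathfrak{A}_0$; (ii) it restricts to the identity on $\mathfrak{A}_0$; and (iii) it is a positive, contractive $\mathfrak{A}_0$-bimodule map (equivalently, by Tomiyama's theorem, it is a norm-one idempotent projection onto the subalgebra). Since $Q$ on all of $B(l^2(S))$ is patently completely positive and contractive (it is a diagonal compression, a direct sum of the unital completely positive maps $A \mapsto P_a A P_a$ composed with the trivial corner inclusions), properties (i) and (ii) carry the real content, and (iii) will then follow from Tomiyama's characterization once (i) and (ii) are in place.

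First I would establish (i) on the dense $*$-subalgebra $P(S)$ of finite linear combinations of monomials, then extend by continuity. The key computation is that for a single monomial $V \in S^*_{red}$, the operator $Q(V) = \bigoplus_a P_a V P_a$ is determined by the diagonal entries $(Ve_a, e_a)$. By Lemma \ref{ind}, if $\mathrm{ind}\,V = c \neq 0$ then $V e_a$ is either $0$ or a basis vector $e_{a+c}$ orthogonal to $e_a$, so every diagonal entry vanishes and $Q(V) = 0 \in \mathfrak{A}_0$. If $\mathrm{ind}\,V = 0$, then by the Remark following Lemma \ref{ind}, $V$ is already a diagonal projection with respect to the basis $\{e_a\}$, hence $Q(V) = V \in \mathfrak{A}_0$; this simultaneously gives property (ii) on monomials of index zero. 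Extending linearly, $Q$ maps $P(S)$ into $\mathfrak{A}_0$ and fixes $P(S) \cap \mathfrak{A}_0$ pointwise. Since $Q$ is norm-continuous on $B(l^2(S))$ and $P(S)$ is dense in $C^*_{red}(S)$ with $P(S)\cap\mathfrak{A}_0$ dense in $\mathfrak{A}_0$, both (i) and (ii) pass to the closures, so $Q(C^*_{red}(S)) \subseteq \mathfrak{A}_0$ and $Q|_{\mathfrak{A}_0} = \mathrm{id}$.

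With (i) and (ii) secured, I would invoke Tomiyama's theorem: a norm-one projection from a $C^*$-algebra onto a $C^*$-subalgebra is automatically a positive $\mathfrak{A}_0$-bimodule map, hence a conditional expectation. The norm-one property is immediate since $\|Q(A)\| = \sup_a \|P_a A P_a\| \le \|A\|$ and $Q$ fixes the unit, and idempotency $Q^2 = Q$ follows from $Q|_{\mathfrak{A}_0} = \mathrm{id}$ together with $Q(C^*_{red}(S)) \subseteq \mathfrak{A}_0$. Alternatively, if one prefers to avoid citing Tomiyama, the bimodule property can be checked by hand: for $x \in \mathfrak{A}_0$ (a diagonal operator) and $A \in B(l^2(S))$ one computes $P_a(xA)P_a = (xe_a,e_a)P_a A P_a$ directly from diagonality of $x$, giving $Q(xA) = xQ(A)$, and similarly on the right; positivity is then inherited from positivity of each compression $A \mapsto P_a A P_a$.

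The main obstacle is essentially bookkeeping rather than depth: one must be careful that the infinite direct sum $\bigoplus_a P_a A P_a$ genuinely defines a bounded operator on $l^2(S)$ (it does, with norm $\le \|A\|$, since the ranges $\CC e_a$ are mutually orthogonal and reducing for the summands) and that the density arguments are applied to the right dense sets — in particular that $\mathfrak{A}_0$ is the closure of the diagonal monomials and not something larger, which is exactly the content of the definition of $\mathfrak{A}_0$ combined with the Remark. I expect no serious difficulty; the substance of the lemma is the index computation from Lemma \ref{ind}, which has already been done.
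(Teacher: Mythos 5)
Your proposal is correct and follows essentially the same route as the paper: the heart of both arguments is the index computation from Lemma \ref{ind}, showing that $Q$ fixes index-zero monomials (which are diagonal) and annihilates the diagonal of monomials of nonzero index, then extending by linearity and density. You are in fact more complete than the paper, which stops after establishing $Q(C^*_{red}(S))=\mathfrak{A}_0$ and leaves the remaining conditional-expectation axioms (norm one, idempotence, the $\mathfrak{A}_0$-bimodule property via Tomiyama) implicit.
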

    \begin{proof}
    Since $e_a, a\in S$ are eigenvectors for any monomial with zero index, we have
    $$Q(V)=V,\mbox{ if } \mathrm{ind}V=0.$$
    Let $\mathrm{ind}V=c$. Then, by Lemma \ref{ind}  $\mathrm{ind}P_aVP_a=0$ for any $a\in S$. Therefore, $\mathrm{ind}Q(V)=0$ and finally, $Q(C_{red}^*(S))=\mathfrak{A}_0$. \end{proof}
    
    Fix an element $c$ in group $\Gamma$, and define a linear map $Q_c\colon B(l^2(S))\to B(l^2(S))$ by the following relation.
     $$ Q_c(A)=\sum\limits_{a\in S}P_{a+c}A P_a.$$
    Here $P_{a+c}=0$ if $a+c$ is not contained in $S$.
    
    \begin{lemma}
    \label{qc} Operator $Q_c$ maps the algebra $C_{red}^*(S)$ on space $\mathfrak{A}_c$.
    \end{lemma}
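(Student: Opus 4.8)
The plan is to show two things: first that $Q_c(A) \in \mathfrak{A}_c$ for every $A \in C^*_{red}(S)$, and second that $Q_c$ is surjective onto $\mathfrak{A}_c$. The natural strategy is to reduce to the two previous lemmas on the conditional expectation $Q = Q_0$ (Lemma \ref{uslov}) and the shift identity $\mathfrak{A}_c = T_b^* \mathfrak{A}_0 T_a$ for $c = a - b$ established just before Lemma \ref{qc}.

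For the inclusion $Q_c(C^*_{red}(S)) \subseteq \mathfrak{A}_c$, I would first verify the key identity
$$ Q_c(A) = T_b^* \, Q\!\left(T_b A T_a^*\right) T_a, \qquad c = a-b, $$
by a direct computation on basis vectors: since $P_{a'}$ projects onto $\CC e_{a'}$, conjugating the defining sum $\sum_{a'} P_{a'+c} A P_{a'}$ by the isometries $T_a, T_b$ has the effect of shifting the indices of the projections, and the relations $T_a^* e_d = e_{d-a}$ (or $0$) together with $T_a e_d = e_{a+d}$ make the rearrangement go through. Once this identity is in hand, Lemma \ref{uslov} gives $Q(T_b A T_a^*) \in \mathfrak{A}_0$, and then $T_b^* \mathfrak{A}_0 T_a \subseteq \mathfrak{A}_c$ is exactly the displayed fact preceding the lemma, so $Q_c(A) \in \mathfrak{A}_c$. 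An alternative, perhaps cleaner, route is to check directly on monomials: if $\mathrm{ind} V = c$ then by Lemma \ref{ind} $V e_d = e_{d+c}$ or $0$, so $P_{a+c} V P_a = V P_a$ for each $a$, whence $Q_c(V) = \sum_a V P_a = V$; and if $\mathrm{ind} V = c' \neq c$ then $P_{a+c} V P_a = 0$ for all $a$, so $Q_c(V) = 0$. Since every element of $C^*_{red}(S)$ is a norm limit of finite linear combinations of monomials (the linear span $P(S)$ is dense), and since $Q_c$ is contractive — being a sum of compressions by mutually orthogonal projections on the source and on the target — continuity extends the conclusion to all of $C^*_{red}(S)$, giving $Q_c(A) \in \mathfrak{A}_c$.

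Surjectivity is then nearly immediate from the monomial computation: for a monomial $V$ with $\mathrm{ind} V = c$ we have just seen $Q_c(V) = V$, so $Q_c$ is the identity on the dense subspace of $\mathfrak{A}_c$ spanned by such monomials, and by continuity $Q_c$ restricted to $\mathfrak{A}_c$ is the identity; in particular its image is all of $\mathfrak{A}_c$. Combined with the inclusion, $Q_c(C^*_{red}(S)) = \mathfrak{A}_c$.

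The main technical obstacle is the boundedness (contractivity) of $Q_c$: unlike $Q_0$, the map $Q_c$ compresses by the family $\{P_{a+c}\}$ on the left and $\{P_a\}$ on the right, and although each family is mutually orthogonal, one must justify that the strong-operator sum $\sum_a P_{a+c} A P_a$ converges and defines a bounded operator of norm at most $\|A\|$. This follows from the standard fact that for mutually orthogonal projection families $\{E_i\}$, $\{F_i\}$ the map $A \mapsto \sum_i E_i A F_i$ is a contraction, but making the convergence of the net precise on $l^2(S)$ — and checking that the limit indeed lies in $B(l^2(S))$ rather than merely being a formal sum — is the step that needs the most care. Everything else is a routine index-bookkeeping argument built on Lemmas \ref{ind}, \ref{acb}, and \ref{uslov}.
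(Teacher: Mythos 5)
Your ``alternative, cleaner route'' is exactly the paper's proof: for a monomial $V$ with $\mathrm{ind}\,V=c$ one has $P_{a+c}VP_a=VP_a$, hence $Q_c(V)=V$, while $Q_c(V)=0$ when the index differs, and the statement follows by density of $P(S)$. Your additional verification that $Q_c$ is a contraction (via the mutually orthogonal families $\{P_{a+c}\}$ and $\{P_a\}$) is correct and fills in a step the paper leaves implicit.
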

    \begin{proof}
   Take monomial $V$ with $\mathrm{ind}V=c$. We have $VP_a=P_{a+c}V$ for any $a\in S$. Hence, $Q_c(V)=V$. If $\mathrm{ind}V\neq c$, then $Q_c(V)=0$. Therefore, $Q_c(C_{red}^*(S))=\mathfrak{A}_c$. 
    \end{proof}
   
   For any $A$ in $C_{red}^*(S)$ we call $A_c=Q_c(A)$ a $c$\emph{-coefficient} of $A$.
   
   \begin{lemma}
   \label{uq} Following conditions are equivalent:
   \begin{enumerate}
   \item $A=0$,
   \item $Q_c(A)=0$ for all $c\in\Gamma$.
   \end{enumerate}
   \end{lemma}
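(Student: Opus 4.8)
The implication $(1)\Rightarrow(2)$ is immediate, since each $Q_c$ is linear, so the plan is to establish $(2)\Rightarrow(1)$. The underlying idea is that an operator $A\in C_{red}^*(S)\subseteq B(l^2(S))$ is completely determined by its matrix entries $(Ae_a,e_b)$, $a,b\in S$, with respect to the orthonormal basis $\{e_a\}_{a\in S}$, and that each such entry is read off from a single coefficient $Q_c(A)$.

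First I would compute, for fixed $c\in\Gamma$ and $a,b\in S$, the entry $(Q_c(A)e_a,e_b)$. Since $P_{a'}e_a=\delta_{a',a}e_a$, only the summand with $a'=a$ survives in $Q_c(A)=\sum_{a'\in S}P_{a'+c}AP_{a'}$, giving $Q_c(A)e_a=P_{a+c}Ae_a$, with the convention $P_{a+c}=0$ when $a+c\notin S$. Hence $(Q_c(A)e_a,e_b)=(Ae_a,P_{a+c}e_b)$, which equals $(Ae_a,e_b)$ if $b=a+c$ and $0$ otherwise.

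Next, given arbitrary $a,b\in S$, I would apply this identity with $c:=b-a\in\Gamma$ (legitimate because $S$ generates $\Gamma$, and then $a+c=b\in S$). Condition $(2)$ yields $Q_{b-a}(A)=0$, so $(Ae_a,e_b)=(Q_{b-a}(A)e_a,e_b)=0$. As this holds for every pair $a,b\in S$ and $\{e_a\}_{a\in S}$ is an orthonormal basis of $l^2(S)$, we conclude $A=0$.

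There is no real obstacle here; the only points requiring a little care are the convention $P_{a+c}=0$ for $a+c\notin S$, and the observation that one need not discuss convergence of the series defining $Q_c(A)$, since each matrix entry picks out a single nonzero summand. Equivalently, one may argue that the operators $P_{a+c}AP_a$ have pairwise orthogonal source spaces $\CC e_a$ and pairwise orthogonal range spaces, so $Q_c(A)=0$ forces every summand to vanish, whence $P_bAP_a=0$ for all $a,b\in S$ and thus $A=0$.
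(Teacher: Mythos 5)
Your argument is correct and follows essentially the same route as the paper: both reduce to the observation that $Q_c(A)=0$ forces all matrix entries $(Ae_a,e_b)$ with $b-a=c$ to vanish, so vanishing of all coefficients kills every entry with respect to the orthonormal basis $\{e_a\}_{a\in S}$. Your write-up merely makes the computation of $(Q_c(A)e_a,e_b)$ more explicit than the paper's one-line version.
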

   \begin{proof}
   Condition $A_c=0$ implies that $P_aAP_b=0$ for any $a,b\in S$. Equivalently, $(Ae_b,e_a)=0$ for any $a$ and $b$ in $S$. Since $\{e_a\}_{a\in S}$ forms an orthonormal basis in $l^2(S)$, we get $A=0$. The reverse follows immediately from Lemma \ref{qc}.
   \end{proof}
   The following statement is an obvious consequence. 
   
   \begin{theorem}
   \label{gr} $C^\ast$-algebra $C_{red}^*(S)$ is a graded $C^\ast$-algebra and can be formally represented in the following way
   $$C_{red}^*(S)=\bigoplus\limits_{c\in\Gamma}\mathfrak{A}_c,$$
   
   $$\mbox{ and } \mathfrak{A}_c=T_a^*\mathfrak{A}_0T_b,\mbox{ if } c=b-a,\ b,a\in S.$$
   \end{theorem}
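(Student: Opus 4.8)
The plan is to read Theorem~\ref{gr} as the assertion that the family $\{\mathfrak{A}_c\}_{c\in\Gamma}$ is a \emph{topological $\Gamma$-grading} of $C_{red}^*(S)$, so that the symbol $\bigoplus$ is given precise meaning by the projections $Q_c$ together with the faithfulness of Lemma~\ref{uq}. Concretely, I would verify four things: (i) each $\mathfrak{A}_c$ is a closed subspace and $\mathfrak{A}_c^{\,*}=\mathfrak{A}_{-c}$; (ii) $\mathfrak{A}_b\cdot\mathfrak{A}_c\subseteq\mathfrak{A}_{b+c}$; (iii) the algebraic sum $\sum_{c\in\Gamma}\mathfrak{A}_c$ is norm-dense in $C_{red}^*(S)$; (iv) $Q_c$ restricts to the identity on $\mathfrak{A}_c$ and to zero on $\mathfrak{A}_b$ for $b\neq c$, while $A=0$ whenever $Q_c(A)=0$ for all $c$. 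Item (ii) is exactly Lemma~\ref{acb}(1), and the second half of (iv) is Lemma~\ref{uq}.

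For (i), closedness is built into the definition of $\mathfrak{A}_c$. For the involution, I would note that the adjoint of a monomial $V$ (a finite product of elementary monomials $T_\pi(a_i)$, $T_\pi(b_j)^*$) is again a monomial, namely the product of the adjoint elementary monomials in reverse order, and that the additivity of the index, together with $\mathrm{ind}\,T_\pi(a)=a$ and $\mathrm{ind}\,T_\pi(a)^*=-a$, gives $\mathrm{ind}\,V^{*}=-\mathrm{ind}\,V$. Hence $V^{*}\in\mathfrak{A}_{-c}$ whenever $\mathrm{ind}\,V=c$; passing to closed linear spans yields $\mathfrak{A}_c^{\,*}\subseteq\mathfrak{A}_{-c}$, and applying this to $-c$ gives equality. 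For (iii), $C_{red}^*(S)$ is by definition generated by the $T_a$ and $T_b^{*}$, so the $*$-algebra $P(S)$ of finite linear combinations of monomials is norm-dense; since each monomial $V$ lies in $\mathfrak{A}_{\mathrm{ind}\,V}$, we get $P(S)\subseteq\sum_{c}\mathfrak{A}_c$, whence density.

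For (iv): from the proof of Lemma~\ref{qc}, a monomial $V$ satisfies $Q_c(V)=V$ if $\mathrm{ind}\,V=c$ and $Q_c(V)=0$ otherwise; by linearity and norm-continuity of $Q_c$ this extends to $Q_c|_{\mathfrak{A}_c}=\mathrm{id}$ and $Q_c|_{\mathfrak{A}_b}=0$ for $b\neq c$. In particular $\mathfrak{A}_b\cap\mathfrak{A}_c=\{0\}$ for $b\neq c$, so the algebraic sum $\sum_c\mathfrak{A}_c$ is genuinely direct, and Lemma~\ref{uq} says that an element annihilated by every $Q_c$ vanishes. Together these are precisely the statement that $A\mapsto(Q_c(A))_{c\in\Gamma}$ realizes $C_{red}^*(S)$ as the topological direct sum of the $\mathfrak{A}_c$ with coordinate projections $Q_c$. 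The identification $\mathfrak{A}_c=T_a^{*}\mathfrak{A}_0 T_b$ for $c=b-a$ was already recorded in the paragraph following Lemma~\ref{acb} (there stated as $\mathfrak{A}_c=T_b^{*}\mathfrak{A}_0 T_a$ for $c=a-b$), up to the relabeling $a\leftrightarrow b$; I would simply restate it.

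I do not expect a genuine obstacle: as the authors note, the theorem is an essentially formal consequence of Lemmas~\ref{acb}, \ref{qc} and \ref{uq}. The only point deserving care is conceptual rather than technical — the symbol $\bigoplus$ must not be read as an internal Banach-space direct sum (the sum $\sum_c\mathfrak{A}_c$ is in general \emph{not} norm-closed), but as a topological grading whose coordinates are the bounded maps $Q_c$; the content of the statement is the injectivity of $A\mapsto(Q_c(A))_{c}$. I would make this interpretation explicit before beginning the verification, so that ``graded $C^{*}$-algebra'' is unambiguous.
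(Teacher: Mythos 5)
Your proposal is correct and follows exactly the route the paper intends: the authors give no proof beyond calling the theorem ``an obvious consequence'' of Lemmas~\ref{acb}, \ref{qc} and \ref{uq} together with the remark $\mathfrak{A}_c=T_b^*\mathfrak{A}_0T_a$, and your items (i)--(iv) are precisely the details being elided, including the correct reading of $\bigoplus$ as a formal/topological grading via the bounded coordinate maps $Q_c$ rather than a norm-closed internal direct sum. No gaps.
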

   
   \subsection{Dynamical system, generated by the regular representation}\label{dinam}
   
   A space $C(G,C_{red}^*(S))$ of all continuous functions on a group $G$ taking values in $C_{red}^*(S)$, is a $C^\ast$-algebra with pointwise multiplication, pointwise involution and uniform norm:
   $$f^*(\alpha)=(f(\alpha))^*,\ \|f\|_\infty=\sup\limits_{\alpha\in G}\|f(\alpha)\|,\ f\in C(G,C_{red}^*(S)). $$
   
   \emph{A generalized polynomial} in $C(G,C_{red}^*(S))$ is a function of the following type
   $$P(\alpha)=\sum\limits_{i=1}^{n}\chi^{a_i}(\alpha)A_i, $$
   where $A_i\in C_{red}^*(S)$, $a_i\in\Gamma$. A set of generalized polynomials $P(G,C_{red}^*(S))$ forms a dense involutive subalgebra in the algebra $C(G,C_{red}^*(S))$. Obviously,
   $$A_i=\int\limits_{G}\chi^{-a_i}(\alpha)P(\alpha)d\mu(\alpha), $$
   where $\mu$ is a normed Haar measure of group $G$.
   
   If $\{P_n(\alpha) \}_{n=1}^\infty$ is a Cauchy sequence, which converges to $A(\alpha)$ in $C(G,C_{red}^*(S))$, then the limit
   $$\lim\limits_{n\to\infty}\int\limits_{G}P_n(\alpha)\chi^{-a}(\alpha)d\mu(\alpha)$$
   exists and equals  $a$-coefficient of $A(\alpha)$:
   $$A_a=\int\limits_{G}\chi^{-a}(\alpha)A(\alpha)d\mu(\alpha).$$
   Thus, each function $A(\alpha)$ in $C(G,C_{red}^*(S))$ is uniquely represented in a formal Fourier series
   $$A(\alpha)\simeq \sum\limits_{a\in\Gamma}\chi^{a}(\alpha)A_a $$
   with Fourier coefficients $$A_a=\int\limits_{G}\chi^{-a}(\alpha)A(\alpha)d\mu(\alpha). $$
   
   Denote by $C^*(G,S)$ a $C^\ast$-subalgebra of $C(G,C_{red}^*(S))$, generated bby elements $$\widehat{T_a}(\alpha)=\chi^a(\alpha)T_a\mbox{ and }\widehat{T_a}^*(\alpha)=\chi^{-a}(\alpha) T_a,$$
   for all $a$ in $S$.
   
   \begin{lemma}
   \label{fr} Each function $A(\alpha)\in C^*(G,S)$ is formally represented by
   $$A(\alpha)=\bigoplus\limits_{a\in\Gamma} \chi^a A_a, $$
   where $A_a$ lies in space $\mathfrak{A}_a$.  
   \end{lemma}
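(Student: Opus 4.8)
The plan is to reduce the statement to $*$-polynomials in the generating operators $\widehat{T_a}$, $\widehat{T_a}^{\,*}$ and then pass to the norm closure. First I would describe a single monomial in the generators. A finite product $W(\alpha)=\widehat{T_{a_1}}^{\,\varepsilon_1}(\alpha)\cdots\widehat{T_{a_n}}^{\,\varepsilon_n}(\alpha)$, $\varepsilon_i\in\{1,*\}$, factors as $W(\alpha)=\chi^{c}(\alpha)\,V$, where $V=T_{a_1}^{\varepsilon_1}\cdots T_{a_n}^{\varepsilon_n}\in S_{red}^*$ is the underlying operator monomial and $c\in\Gamma$ is the sum of the character exponents, the scalar factors $\chi^{\pm a_i}(\alpha)$ commuting through the operators. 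The point to check is that $c=\mathrm{ind}\,V$: the exponent attached to $\widehat{T_a}$ is $a=\mathrm{ind}\,T_a$ and the one attached to $\widehat{T_a}^{\,*}$ is $-a=\mathrm{ind}\,T_a^{*}$, while under multiplication the exponent adds via $\chi^{a}\chi^{b}=\chi^{a+b}$ exactly as the index adds via $\mathrm{ind}(V_1V_2)=\mathrm{ind}\,V_1+\mathrm{ind}\,V_2$. Hence every monomial in the generators has the form $\chi^{\mathrm{ind}\,V}(\alpha)\,V$ with $V\in S_{red}^*$.

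Next I would handle a generalized polynomial $P(\alpha)$ in the generators, i.e. a finite linear combination of such monomials. Grouping the summands by the value of the index, one obtains $P(\alpha)=\sum_{a\in F}\chi^{a}(\alpha)P_a$ with $F\subset\Gamma$ finite and each $P_a$ a finite linear combination of operator monomials of index $a$, so $P_a\in\mathfrak{A}_a\subset C_{red}^*(S)$. By the uniqueness of the Fourier expansion recorded before the lemma, $P_a$ coincides with the $a$-th Fourier coefficient $\int_G\chi^{-a}(\alpha)P(\alpha)\,d\mu(\alpha)$; in particular the claim of the lemma holds for all generalized polynomials lying in $C^{*}(G,S)$.

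Finally I would pass to an arbitrary $A(\alpha)\in C^{*}(G,S)$ by density. Since $C^{*}(G,S)$ is by definition the closed $*$-subalgebra generated by the $\widehat{T_a}$, $\widehat{T_a}^{\,*}$, and $*$-polynomials in these generators are precisely the generalized polynomials treated above, there are $P_n(\alpha)$ of that type with $\|P_n-A\|_\infty\to0$. The $a$-th Fourier coefficient map $B\mapsto\int_G\chi^{-a}(\alpha)B(\alpha)\,d\mu(\alpha)$ is contractive, being an average of $B$ against a unimodular function, so $(P_n)_a$ is Cauchy and converges to $A_a$; as $(P_n)_a\in\mathfrak{A}_a$ for every $n$ and $\mathfrak{A}_a$ is norm-closed, $A_a\in\mathfrak{A}_a$. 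Together with the formal Fourier representation $A(\alpha)\simeq\sum_{a\in\Gamma}\chi^{a}(\alpha)A_a$ already available in $C(G,C_{red}^*(S))$ and the grading of Theorem~\ref{gr}, this yields the asserted decomposition $A(\alpha)=\bigoplus_{a\in\Gamma}\chi^{a}A_a$ with $A_a\in\mathfrak{A}_a$.

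The only delicate point I anticipate is the first step, namely the bookkeeping that identifies the character exponent of a monomial in the generators with the index of the corresponding operator monomial; once that is in place, the rest is a standard ``polynomials are dense, coefficient functionals are continuous, graded pieces are closed'' argument.
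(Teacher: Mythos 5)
Your argument is correct and follows essentially the same route as the paper: factor each monomial in the generators as $\chi^{\mathrm{ind}V}(\alpha)V$, conclude that generalized polynomials have coefficients in the graded pieces $\mathfrak{A}_a$, and pass to the closure. The paper leaves the final density step implicit (relying on the Fourier-coefficient discussion preceding the lemma), whereas you spell it out via contractivity of the coefficient functionals and closedness of $\mathfrak{A}_a$; this is a welcome clarification, not a deviation.
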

   \begin{proof}
   A finite product $\widehat{V}(\alpha)$ of functions $T_a(\alpha)$ and $T_b^*(\alpha)$, $a,b\in S$ can be represented in a form
   $$\widehat{V}(\alpha)=\chi^c(\alpha)V,$$
   where $V$ is a monomial, i.e. $\widehat{V}$ without $\chi$, and $c=\mathrm{ind}V$. Hence, each generalized polynomial $P(\alpha)$ in $P(G,S)=P(G,C^*(G,S))\bigcap C^*(G,S)$ is represented in a form
   $$P(\alpha)=\sum\limits_{i=1}^{n}\chi^{a_i}(\alpha)A_{a_i},$$
   where $A_{a_i}\in\mathfrak{A}_{a_i}$.  \end{proof}
   
   \begin{theorem}
   \label{din} There exists a *-isomorphism between $C^\ast$-algebras $C^*(G,S)$ and $C_{red}^*(S)$.
   \end{theorem}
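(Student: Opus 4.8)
The plan is to realise the isomorphism concretely, using the $\Gamma$-grading $C_{red}^*(S)=\bigoplus_{c\in\Gamma}\mathfrak{A}_c$ from Theorem \ref{gr} together with the ``gauge'' action of $G$ that it carries. For $\alpha\in G$ let $U_\alpha$ be the unitary on $l^2(S)$ given by $U_\alpha e_a=\chi^a(\alpha)e_a$; a one-line computation on basis vectors gives $U_\alpha T_a U_\alpha^{*}=\chi^a(\alpha)T_a$ for every $a\in S$. Hence $\gamma_\alpha:=\mathrm{Ad}\,U_\alpha$ is a $*$-automorphism of $B(l^2(S))$ sending each generator $T_a$, $T_a^{*}$ of $C_{red}^*(S)$ to a scalar multiple of itself, so it restricts to a $*$-automorphism of $C_{red}^*(S)$; on a monomial $V$ it acts by $\gamma_\alpha(V)=\chi^{\mathrm{ind}\,V}(\alpha)V$, i.e.\ by multiplication by $\chi^c(\alpha)$ on $\mathfrak{A}_c$.

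Next I would define $\Phi\colon C_{red}^*(S)\to C(G,C_{red}^*(S))$ by $\Phi(A)(\alpha)=\gamma_\alpha(A)$ and check that it is the desired isomorphism onto $C^*(G,S)$. Since the operations in $C(G,C_{red}^*(S))$ are pointwise and each $\gamma_\alpha$ is a $*$-homomorphism, $\Phi$ is a $*$-homomorphism as soon as it is well defined, i.e.\ as soon as each $\Phi(A)$ is a norm-continuous $C_{red}^*(S)$-valued function. For $A$ in the dense subalgebra $P(S)$ one has $\Phi(A)(\alpha)=\sum_i\chi^{c_i}(\alpha)A_i$ with $A_i\in\mathfrak{A}_{c_i}$, which is visibly continuous in $\alpha$; for general $A$, choose $A_n\to A$ in $P(S)$ and use that $\gamma_\alpha$ is isometric, so $\|\Phi(A_n)-\Phi(A)\|_\infty=\|A_n-A\|\to 0$ and $\Phi(A)$ is a uniform limit of continuous functions, hence continuous. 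The same estimate shows $\|\Phi(A)\|_\infty=\sup_{\alpha}\|U_\alpha AU_\alpha^{*}\|=\|A\|$, so $\Phi$ is isometric, in particular injective. (Injectivity can also be read off by evaluating at the neutral element of $G$, where $\gamma_e=\mathrm{id}$.)

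It then remains to identify the image. By construction $\Phi(T_a)(\alpha)=\chi^a(\alpha)T_a=\widehat{T_a}(\alpha)$ and $\Phi(T_a^{*})(\alpha)=\chi^{-a}(\alpha)T_a^{*}=\widehat{T_a}^{*}(\alpha)$, so the image of $\Phi$ contains all generators of $C^*(G,S)$; conversely, being a $*$-homomorphic image of a $C^*$-algebra, $\Phi(C_{red}^*(S))$ is a $C^*$-subalgebra of $C(G,C_{red}^*(S))$ generated by exactly these elements, whence $\Phi(C_{red}^*(S))=C^*(G,S)$. Therefore $\Phi$ is a $*$-isomorphism of $C_{red}^*(S)$ onto $C^*(G,S)$.

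I expect the only genuine point to be the well-definedness of $\Phi$, namely that conjugation by $U_\alpha$ depends norm-continuously on $\alpha$ after passing to the closure — everything else (the isometry bound, surjectivity onto $C^*(G,S)$, and the $*$-homomorphism property) is formal once the gauge action is in hand. A more hands-on alternative would be to define $\Phi$ first on $P(S)$ by $V\mapsto\widehat V$, establish $\|\sum_i\lambda_i\widehat V_i\|_\infty=\|\sum_i\lambda_i V_i\|$ directly, and extend by continuity; this is the same estimate organised differently, and it also makes transparent why Lemma \ref{fr} is the right normal-form statement for elements of $C^*(G,S)$.
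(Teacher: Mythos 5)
Your argument is correct, but it is organized in the opposite direction from the paper's. The paper defines the evaluation map $\widehat{e}\colon C^*(G,S)\to C_{red}^*(S)$, $A(\alpha)\mapsto A(e)$, notes that it is obviously a surjective $*$-homomorphism, and then proves injectivity by appealing to the Fourier normal form of Lemma \ref{fr} together with the uniqueness of the graded coefficients (Lemma \ref{uq}, i.e.\ the conditional expectations $Q_c$). You instead build the inverse map $\Phi(A)(\alpha)=U_\alpha AU_\alpha^{*}$ explicitly, with $U_\alpha e_a=\chi^a(\alpha)e_a$, and get isometry for free because each $\gamma_\alpha=\mathrm{Ad}\,U_\alpha$ is a spatially implemented automorphism; the only nontrivial point, as you say, is norm-continuity of $\alpha\mapsto\gamma_\alpha(A)$, which your density-plus-uniform-limit argument handles correctly (the verification $U_\alpha T_aU_\alpha^{*}=\chi^a(\alpha)T_a$ checks out, and the image is identified correctly since a $*$-homomorphic image of a $C^*$-algebra is closed). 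What your route buys is independence from the grading machinery: you never need Lemma \ref{uq} or the maps $Q_c$, only the existence of the diagonal unitaries implementing the gauge action. What the paper's route buys is brevity given its earlier investment: once Lemmas \ref{fr} and \ref{uq} are in place, injectivity of $\widehat{e}$ is a two-line consequence. The two proofs are genuinely linked — your unitaries are exactly what underlie the paper's $Q_c$ (one has $Q_c(A)=\int_G\chi^{-c}(\alpha)U_\alpha AU_\alpha^{*}\,d\mu(\alpha)$) and your $\Phi$ is the two-sided inverse of the paper's $\widehat{e}$ since $\gamma_e=\mathrm{id}$ — but as written they rest on different technical pillars, and both are sound.
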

   \begin{proof}
   Let $e$ be a neutral element of $G$. Define a map $\widehat{e}\colon C(G,S)\to C_{red}^*(S)$ by $\widehat{e}(A(\alpha))=A(e)$. Obviously, $\widehat{e}$ is a surjective *-homomorphism. Suppose $A(\alpha)\in Ker\widehat{e}$ and $A\neq 0$. Then 
   $$A(\alpha)=\sum\limits_{a\in\Gamma}\chi^a(\alpha)A_a,$$
   where $A_a\in\mathfrak{A}_a$. Since $A(e)=0$, by virtue of Lemma \ref{uq}, $A_a=0$ for any $a\in\Gamma$. Therefore, $A(\alpha)=0$, which shows that $\widehat{e}$ is a *-isomorphism.  \end{proof}
   
   Consider a shift $\widehat{\alpha}\colon C(G,C_{red}^*(S))\to C(G,C_{red}^*(S))$, $\widehat{\alpha}(f)=f_\alpha$, where $f_\alpha(\beta)=f(\alpha\beta)$. Obviously, it is a *-automorphism on algebra $C(G,C^*(S))$ for all $\alpha\in G$. Then one can define a representation $\tau\colon G\to Aut(C(G,C_{red}^*(S)))$ of $G$ into *-automorphism group of $C^\ast$-algebra $C(G,C_{red}^*(S))$. Due to $G$-invariance of algebra $P(G,S)$, its closure $C^*(G,S)$ is invariant as well. This implies that representation $\tau$ is also a reresentation of $G$ into group $Aut(C^*(G,S))$. Hence, a triple $(C^*(G,S),G,\tau)$ is a $C^\ast$-dynamical system. Finally, we obtain the following statement.
   
   \begin{theorem}
   \label{pred} Mapping $\tau$ is a representation of group $G$ in $Aut(C_{red}^*(S))$.
   \end{theorem}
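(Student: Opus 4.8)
The plan is to transport the $C^\ast$-dynamical system $(C^*(G,S),G,\tau)$ constructed in the paragraph preceding the statement to the algebra $C_{red}^*(S)$ by means of the $\ast$-isomorphism supplied by Theorem \ref{din}. Write $\varphi=\widehat{e}\colon C^*(G,S)\to C_{red}^*(S)$ for that isomorphism, with inverse $\varphi^{-1}$. First I would define, for each $\alpha\in G$,
$$\widetilde{\tau}_\alpha=\varphi\circ\tau_\alpha\circ\varphi^{-1}\colon C_{red}^*(S)\to C_{red}^*(S).$$
Since $\varphi$ is a $\ast$-isomorphism and each $\tau_\alpha$ is a $\ast$-automorphism of $C^*(G,S)$ (already established, using the $G$-invariance of $P(G,S)$ and hence of its closure $C^*(G,S)$), $\widetilde{\tau}_\alpha$ is a $\ast$-automorphism of $C_{red}^*(S)$.

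Next I would verify the homomorphism property. Because $\alpha\mapsto\tau_\alpha$ is a representation of $G$, we have $\tau_{\alpha\beta}=\tau_\alpha\tau_\beta$ and $\tau_e=\mathrm{id}$; conjugating by $\varphi$,
$$\widetilde{\tau}_{\alpha\beta}=\varphi\tau_\alpha\tau_\beta\varphi^{-1}=(\varphi\tau_\alpha\varphi^{-1})(\varphi\tau_\beta\varphi^{-1})=\widetilde{\tau}_\alpha\widetilde{\tau}_\beta,\qquad \widetilde{\tau}_e=\mathrm{id},$$
so $\widetilde{\tau}$ is a representation of $G$ in $\mathrm{Aut}(C_{red}^*(S))$. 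Identifying $C^*(G,S)$ with $C_{red}^*(S)$ through $\varphi$, this $\widetilde{\tau}$ is precisely the map $\tau$ named in the statement. To make the action concrete and double-check, I would compute its effect on generators: since $\varphi(\widehat{T_a})=\widehat{T_a}(e)=\chi^a(e)T_a=T_a$ and $\tau_\beta(\widehat{T_a})(\alpha)=\widehat{T_a}(\beta\alpha)=\chi^a(\beta)\chi^a(\alpha)T_a=\chi^a(\beta)\widehat{T_a}(\alpha)$, we get $\widetilde{\tau}_\beta(T_a)=\chi^a(\beta)T_a$ and $\widetilde{\tau}_\beta(T_a^*)=\overline{\chi^a(\beta)}\,T_a^*$; more generally, by Lemma \ref{fr} and Theorem \ref{gr}, $\widetilde{\tau}_\beta$ multiplies the homogeneous component $\mathfrak{A}_c$ by $\chi^c(\beta)$, i.e. it is the natural gauge action of $G$ on the $\Gamma$-graded $C^\ast$-algebra.

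I do not expect a genuine obstacle: the content is entirely formal once Theorem \ref{din} is in hand. The only points demanding a line of care are (i) confirming that $\tau_\alpha$ really preserves the subalgebra $C^*(G,S)$, so that conjugation by $\varphi$ is legitimate — this is the $G$-invariance noted just before the theorem; and (ii) if one wishes $(C_{red}^*(S),G,\widetilde{\tau})$ to be a bona fide $C^\ast$-dynamical system, observing that strong continuity of $\alpha\mapsto\widetilde{\tau}_\alpha$ is inherited from that of $\alpha\mapsto\tau_\alpha$ on $C(G,C_{red}^*(S))$, since $\varphi$ is an isometric isomorphism.
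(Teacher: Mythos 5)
Your proof is correct and follows essentially the same route as the paper: the paper establishes the shift action $\tau$ on $C(G,C_{red}^*(S))$, notes the $G$-invariance of $P(G,S)$ and hence of $C^*(G,S)$, and then (implicitly) transports the resulting dynamical system to $C_{red}^*(S)$ via the isomorphism $\widehat{e}$ of Theorem \ref{din} — exactly your conjugation argument. Your explicit computation $\widetilde{\tau}_\beta(T_a)=\chi^a(\beta)T_a$ also matches the gauge-action formula the paper uses later.
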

   
   Thus, \emph{there exists a non-trivial $C^\ast$-dynamical system} $(C_{red}^*(S),G,\tau)$.
   
   \subsection{Ideals in the algebra $C_{red}^*(S)$}
   
      Consider a representation of $C(G)$ on a Hilbert space $L^2(G,d\mu)$:
      $$f\mapsto T_f, \ T_fg=f\cdot g.$$  This map realizes a *-isomorphism between $C(G)$ and a commutative subalgebra in the algebra $B(L^2(G,d\mu))$ of all bounded linear operators on $L^2(G,d\mu)$. One can easily verify that $\| T_f\|=\|f\|_\infty$.

   Denote by $H^2(S)$ subspace in $L^2(G,d\mu)$ of functions with spectrum in $S$, i.e. $f$ lies in $H^2(S)$ if and only if $f=\sum\limits_{a\in S}C_a^f\chi^a$. The Fourier transform maps $H^2(S)$ on $l^2(S)$. Let $P_S\colon L^2(G,d\mu)\to H^2(S)$ be an orthonormal projection on $H^2(S)$.
   
   \begin{lemma}
   \label{ps1} For any $f\in C(G)$ we have
   $$\|P_ST_fP_S\|=\|f\|_\infty. $$
   \end{lemma}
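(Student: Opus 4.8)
The plan is to prove the inequality in two directions. Since $P_S$ is a projection, $\|P_S T_f P_S\| \le \|T_f\| = \|f\|_\infty$ follows immediately, so the whole content is the reverse inequality $\|P_S T_f P_S\| \ge \|f\|_\infty$. The key idea is to exploit the fact that $G$ is a \emph{group}, hence translation-invariant: multiplication by the character $\chi^a$ on $L^2(G,d\mu)$ shifts the spectrum by $a$, and a large enough shift moves the support of any fixed trigonometric polynomial entirely into $S$. More precisely, let me first treat the case where $f$ is a trigonometric polynomial, $f = \sum_{a \in F} C_a \chi^a$ with $F \subset \Gamma$ finite. Because $S$ generates $\Gamma$ and $S$ is directed with respect to the natural order (the net property recalled just before Lemma~\ref{mon}), there exists $b \in S$ such that $a + b \in S$ for every $a \in F$; indeed write each $a = s_a - t_a$ with $s_a,t_a\in S$ and take $b = \sum_{a\in F} t_a$.

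Next I would observe that conjugating $T_f$ by the unitary $M_{\chi^b}$ of multiplication by $\chi^b$ gives $M_{\chi^b} T_f M_{\chi^b}^* = T_f$ (multiplication operators commute), but conjugating the \emph{projection} is what matters: $M_{\chi^b} H^2(S)$ is the space of $L^2$-functions with spectrum in $S + b \subseteq S$... which is not quite right, so instead I argue on vectors. Take any $g \in H^2(S)$ with $\|g\|_2 = 1$; then $\chi^b g$ has spectrum in $S + b \subseteq S$ only if... no. The correct move: note $T_f P_S T_f^* $ and compute $\| T_f g\|_2$ for a well-chosen $g$. Pick $g = \chi^b h$ where $h$ has spectrum in a small finite set $E$ around $0$ and $\|h\|_2=1$; then $g\in H^2(S)$ for $b$ large enough, $T_f g = f \chi^b h$ has spectrum in $F + b + E$, which lies in $S$ once $b$ is chosen large enough relative to $F$ and $E$, so $P_S T_f g = T_f g$. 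Hence $\|P_S T_f P_S\| \ge \|T_f g\|_2 = \|f \chi^b h\|_2 = \|f h\|_2$. Letting $h$ run over an approximate identity concentrated near a point $\alpha_0 \in G$ where $|f(\alpha_0)|$ is close to $\|f\|_\infty$ forces $\|f h\|_2 \to \|f\|_\infty$, giving $\|P_S T_f P_S\| \ge \|f\|_\infty$.

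Finally I would pass from trigonometric polynomials to general $f \in C(G)$ by density: trigonometric polynomials are uniformly dense in $C(G)$ (Stone--Weierstrass, since $G$ is compact abelian and the characters separate points), and both sides of the claimed identity are continuous in $\|\cdot\|_\infty$ — the left side because $f \mapsto T_f$ is an isometry so $f\mapsto \|P_S T_f P_S\|$ is $1$-Lipschitz, and the right side trivially. So the identity extends from the dense set to all of $C(G)$.

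The main obstacle is the bookkeeping in the lower-bound step: choosing the shift $b \in S$ so that $F + b + E \subseteq S$ simultaneously, and arranging the approximate-identity functions $h$ to have spectrum in a prescribed small set $E$ while concentrating in $L^2$-mass near $\alpha_0$. The first is handled by the directedness of $(S,\prec)$; the second is the standard construction of an approximate identity on a compact abelian group by truncating a Fejér-type kernel — one must check that a finite-spectrum approximate identity exists, which it does precisely because $G$ is compact so $\widehat{G} = \Gamma$ is discrete and the Fejér means are trigonometric polynomials. Everything else is routine.
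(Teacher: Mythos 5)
Your proof is correct and follows essentially the same route as the paper: establish the lower bound for trigonometric polynomials $f$ by choosing a unit-norm trigonometric polynomial $g$ with $\|fg\|_2$ close to $\|f\|_\infty$, multiplying by $\chi^b$ for a suitable $b\in S$ so that both $\chi^b g$ and $f\chi^b g$ have spectrum in $S$, and then extending by density. The only cosmetic difference is that the paper gets the near-maximizing vector $g$ directly from $\|T_f\|=\|f\|_\infty$ together with density of polynomials in $L^2(G)$, rather than building it from an approximate identity — which also spares you the (inessential) worry about Fej\'er kernels on a general compact abelian dual $\widehat{\Gamma}$.
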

   \begin{proof}
    Assume that $f$ is a polynomial:
    $$f=\sum\limits_{a_i\in\Gamma,i=1}^{n}C_{a_i}^f\chi^{a_i}.$$
    Since $\|T_f\|=\|f\|_\infty$ for any $\varepsilon>0$ there exists $$g=\sum\limits_{j=1}^{m} C_{b_j}^g\chi^{b_j}$$
    in $L^2(G,d\mu)$ such that $\|g\|_2=1$ and  $$\|T_f g\|_2>\|f\|_\infty-\varepsilon. $$
    Take $a$ in $S$ such that $a+b_j$ and $a+a_i+b_j$ lie in $S$ for any $1\leq i\leq n$ and $1\leq j\leq m$. Then functions $\chi^ag$ and $\chi^afg$ lie in $H^2(S)$. Obviously, $\|\chi^ag\|_2=\|g\|_2=1$ and $$P_ST_fP_S\chi^ag=\chi^afg.$$
    Hence, $$ \|P_ST_fP_S\chi^ag\|_2=\|fg\|_2>\|f\|_\infty-\varepsilon.$$
      \end{proof}
    Note that Fourier transform maps $P_ST_{\chi^c}P_S$ to $T_c$ in $H^2(S)$, and $P_ST_{\chi^{-c}}P_S$ to $T_a^*$ in $l^2(S)$ for any $c\in\Gamma$. If $c=b-a$, where $a,b\in S$, then $$P_ST_{\chi^c}P_S=P_ST_{\chi^{-a}}P_ST_{\chi^b}P_S=T_a^*T_b,$$
   $$(P_ST_{\chi^a}P_S)^*=P_ST_{\chi^{-a}}P_S.$$
  
  In what follows for $c=b-a$ we denote by $T_c$ operator $T_a^*T_b$.
  
  Denote by $\mathbb{T}_f$ a Fourier transform of $P_ST_fP_S$, and call $f$ \emph{a symbol} of operator $\mathbb{T}_f$.
  
   \begin{corollary}\label{c1}
    Assume that $f\in C(G)$ is represented as $$f\simeq \sum\limits_{c\in\Gamma}C_c^f\chi^c.$$ Then the Fourier transform of  $P_ST_fP_S$ is represented as $$\mathbb{T}_f\simeq\sum\limits_{c\in\Gamma}C_c^fT_c\mbox{ with } \|\mathbb{T}_f\|=\|f\|\infty.$$
    \end{corollary}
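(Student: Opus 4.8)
The plan is to reduce everything to Lemma~\ref{ps1} together with the identification, recorded just above, of $P_ST_{\chi^c}P_S$ with $T_c$ under the Fourier transform, and then to propagate the stated expansion from trigonometric polynomials to an arbitrary $f\in C(G)$ by a density and continuity argument.

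First I would dispose of the norm statement. The Fourier transform restricts to a unitary $\mathcal F\colon H^2(S)\to l^2(S)$, and by definition $\mathbb T_f=\mathcal F\,(P_ST_fP_S)\,\mathcal F^{-1}$. Hence $\|\mathbb T_f\|=\|P_ST_fP_S\|$, and Lemma~\ref{ps1} gives $\|P_ST_fP_S\|=\|f\|_\infty$. This already settles $\|\mathbb T_f\|=\|f\|_\infty$ with no further work; in particular $f\mapsto\mathbb T_f$ is a linear isometry, so it is norm continuous.

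Next I would establish the Fourier expansion. The ``formal series'' notation for an element of $C_{red}^*(S)$ refers to its grading from Theorem~\ref{gr}, i.e. $A\simeq\sum_{c\in\Gamma}A_c$ means $A_c=Q_c(A)\in\mathfrak A_c$ for every $c$; so it suffices to show $Q_c(\mathbb T_f)=C_c^fT_c$ for each $c\in\Gamma$. Choose trigonometric polynomials $f_n$ with $\|f_n-f\|_\infty\to0$ (for instance the Ces\`aro/Fej\'er means on the compact abelian group $G$); writing $f_n=\sum_c C_c^{f_n}\chi^c$, uniform convergence forces $C_c^{f_n}\to C_c^f$ for each fixed $c$, since it implies $L^2$-convergence and hence convergence of the individual Fourier coefficients. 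Because $\|T_{f_n}-T_f\|=\|f_n-f\|_\infty$ and $P_S$ is a contraction, $P_ST_{f_n}P_S\to P_ST_fP_S$ in norm, whence $\mathbb T_{f_n}\to\mathbb T_f$ in norm. By linearity of $f\mapsto P_ST_fP_S$ and the identification $\mathcal F(P_ST_{\chi^c}P_S)\mathcal F^{-1}=T_c$ we have $\mathbb T_{f_n}=\sum_c C_c^{f_n}T_c$, a finite linear combination of monomials; its $c$-th grading component is $C_c^{f_n}T_c$ and the others vanish (Lemma~\ref{qc}). The coefficient map $Q_c$ is a norm-one projection of $C_{red}^*(S)$ onto $\mathfrak A_c$ --- the families $\{P_{a+c}\}_{a\in S}$ and $\{P_a\}_{a\in S}$ consist of pairwise orthogonal projections --- hence norm continuous, so
\[
Q_c(\mathbb T_f)=\lim_{n\to\infty}Q_c(\mathbb T_{f_n})=\lim_{n\to\infty}C_c^{f_n}T_c=C_c^fT_c .
\]
Since $T_c=T_a^*T_b$ with $c=b-a$ has index $c$, it lies in $\mathfrak A_c$, and this is exactly the assertion $\mathbb T_f\simeq\sum_{c\in\Gamma}C_c^fT_c$ (the same computation for all $c$ simultaneously also shows $\mathbb T_f\in C_{red}^*(S)$, being a norm limit of elements of $P(S)$).

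I expect the only genuinely technical point --- the main obstacle --- to be the norm continuity (in fact contractivity) of the coefficient maps $Q_c$ on $C_{red}^*(S)$ and the bookkeeping that makes the ``$\simeq$'' in the statement literally the grading of Theorem~\ref{gr}; the remainder is a routine density/linearity argument. One could even bypass $Q_c$ entirely: since $f\mapsto\mathbb T_f$ is linear and, by the norm identity, isometric, it is the unique continuous extension of $\chi^c\mapsto T_c$ from trigonometric polynomials to $C(G)$, which already pins down $\mathbb T_f$; invoking the grading only serves to interpret the formal series rigorously.
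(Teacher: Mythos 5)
Your proof is correct and follows exactly the route the paper intends: the corollary is stated without proof, being meant to follow from Lemma~\ref{ps1}, the identification $P_ST_{\chi^c}P_S\mapsto T_c$ recorded just above it, and a linearity/density argument, which is precisely what you carry out (with the useful extra observation that the coefficient maps $Q_c$ are contractive, so the formal series can be read off via the grading of Theorem~\ref{gr}).
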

   \begin{corollary}
   The $C^\ast$-subalgebra in $B(l^2(S))$, generated by operators $\mathbb{T}_f$, $f\in C(G)$ coincides with $C_{red}^*(S)$.
   \end{corollary}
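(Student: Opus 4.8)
The plan is to prove the two inclusions separately. Write $\mathcal{T}$ for the $C^\ast$-subalgebra of $B(l^2(S))$ generated by the operators $\mathbb{T}_f$, $f\in C(G)$. From Corollary~\ref{c1} and Lemma~\ref{ps1} the assignment $f\mapsto\mathbb{T}_f$ is a linear isometry from $C(G)$ into $B(l^2(S))$, so a uniform limit $f_n\to f$ in $C(G)$ yields $\mathbb{T}_{f_n}\to\mathbb{T}_f$ in operator norm; this continuity is the main tool.

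First I would show $\mathcal{T}\subseteq C_{red}^*(S)$. For a trigonometric polynomial $f=\sum_{i=1}^{n}C_{a_i}^f\chi^{a_i}$ with $a_i\in\Gamma$, the remark preceding Corollary~\ref{c1} gives $\mathbb{T}_{\chi^{a_i}}=T_{a_i}$, where $T_{a_i}=T_b^*T_c$ whenever $a_i=c-b$; hence by linearity $\mathbb{T}_f=\sum_i C_{a_i}^f T_{a_i}$ is a finite linear combination of monomials, so $\mathbb{T}_f\in P(S)\subseteq C_{red}^*(S)$. The characters $\chi^a$, $a\in\Gamma$, form a unital, self-adjoint (since $\overline{\chi^a}=\chi^{-a}$) subalgebra of $C(G)$ that separates points of the compact abelian group $G$, so by Stone--Weierstrass the trigonometric polynomials are uniformly dense in $C(G)$. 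Therefore, given an arbitrary $f\in C(G)$, I pick trigonometric polynomials $f_n\to f$ and conclude $\mathbb{T}_f=\lim_n\mathbb{T}_{f_n}\in C_{red}^*(S)$ because $C_{red}^*(S)$ is norm-closed. Thus every generator of $\mathcal{T}$ lies in $C_{red}^*(S)$, and the inclusion follows.

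For the reverse inclusion $C_{red}^*(S)\subseteq\mathcal{T}$, recall that $C_{red}^*(S)$ is generated by $T_a$ and $T_a^*$, $a\in S$. For each such $a$ the function $\chi^a$ belongs to $C(G)$, and again by the identification recalled above $\mathbb{T}_{\chi^a}=T_a$ and $\mathbb{T}_{\chi^{-a}}=T_a^*$. Hence every generator of $C_{red}^*(S)$ already lies in $\mathcal{T}$, so $C_{red}^*(S)\subseteq\mathcal{T}$. Combining the two inclusions gives $\mathcal{T}=C_{red}^*(S)$.

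The proof is mostly bookkeeping on top of Corollary~\ref{c1}; the only steps deserving a line of justification are the density of trigonometric polynomials in $C(G)$ (immediate from Stone--Weierstrass, valid because $G$ is compact and its characters separate points) and the precise equalities $\mathbb{T}_{\chi^a}=T_a$, $\mathbb{T}_{\chi^{-a}}=T_a^*$, which are exactly the content of the remark just before Corollary~\ref{c1}. I do not anticipate a genuine obstacle.
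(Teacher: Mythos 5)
Your proof is correct and follows essentially the argument the paper intends: the identifications $\mathbb{T}_{\chi^{a}}=T_{a}$, $\mathbb{T}_{\chi^{-a}}=T_a^*$ give one inclusion on generators, and the isometry $\|\mathbb{T}_f\|=\|f\|_\infty$ of Lemma~\ref{ps1} together with Stone--Weierstrass density of trigonometric polynomials gives the other. The paper states the corollary without proof as an immediate consequence of Corollary~\ref{c1}, and your write-up is just that consequence spelled out.
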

     Thus, \emph{ mapping $f\to \mathbb{T}_f$ is an isometric embedding of $C(G)$ into $C_{red}^*(S)$.}
   
   Note that $\mathbb{T}_f\cdot \mathbb{T}_g=\mathbb{T}_{f\cdot g}$ if and only if the spectrum $sp(f)$ and $sp(g)$ of functions $f$ and $g$ is contained in $S$.
   
   \emph{A commutator ideal} $K$ of algebra $C_{red}^*(S)$ is an ideal, generated by commutators $T_cT_d-T_dT_c$, where $d,c\in\Gamma$. Note that $K$ is a non-unital $C^\ast$-algebra. 
   
  A restriction of representation $\tau\colon G\to Aut(C_{red}^*(S))$ onto $K$ generates representation $\tau_K\colon G\to Aut K$:
  $$\tau(\alpha)(T_cT_d-T_dT_c)=\chi^{c+d}(\alpha)(T_cT_d-T_dT_c).$$
   
   Consequently, dynamical system $(C_{red}^*(S),G,\tau)$ implies two other dynamical systems $(K,G,\tau)$ and $(C_{red}^*(S)/K,G,\tau)$, where $C_{red}^*(S)/K$ is a quotient algebra.
   
   \begin{lemma}\label{naidetsa}
    Пусть $A\in C_{red}^*(S)$. Тогда найдется такой $f\in C(G)$, что
    $$\mathbb{T}_f=\varinjlim\limits_{c\in S}T_c^*AT_c.$$
   \end{lemma}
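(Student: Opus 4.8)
The plan is to first verify the claim on the dense $*$-subalgebra $P(S)$ of finite linear combinations of monomials, and then to reach a general $A$ by approximation, the crucial ingredient being that the symbol map $g\mapsto\mathbb{T}_g$ is isometric (Corollary~\ref{c1}). Here $\varinjlim_{c\in S}$ denotes the limit of a net indexed by the directed set $(S,\prec)$, and throughout I use that $T_c$ is an isometry, so that $\|T_c^*XT_c\|\le\|X\|$ for every $X\in B(l^2(S))$ and every $c\in S$.

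First I would treat a single monomial $V\in S_{red}^*$: by Lemma~\ref{mon}, $\varinjlim_{c\in S}T_c^*VT_c=T_a^*T_b$ for suitable $a,b\in S$, and in the notation $T_{b-a}:=T_a^*T_b$ this equals $T_{\mathrm{ind}\,V}$. Hence for $A=\sum_{i=1}^{k}\lambda_iV_i\in P(S)$ the net $T_c^*AT_c$ converges to $\sum_{i=1}^{k}\lambda_iT_{\mathrm{ind}\,V_i}=\sum_{d\in\Gamma}\mu_dT_d$, where $\mu_d=\sum_{\{i\,:\,\mathrm{ind}\,V_i=d\}}\lambda_i$ and only finitely many $\mu_d$ are nonzero. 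By Corollary~\ref{c1} this limit equals $\mathbb{T}_{f_A}$, where $f_A=\sum_{d}\mu_d\chi^d$ is a trigonometric polynomial and so lies in $C(G)$; this settles the lemma for $A\in P(S)$.

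Now take an arbitrary $A\in C_{red}^*(S)$ and choose $A_n\in P(S)$ with $\|A_n-A\|\to0$; put $f_n:=f_{A_n}$, so that $\mathbb{T}_{f_n}=\varinjlim_{c\in S}T_c^*A_nT_c$. Combining the contractivity of $X\mapsto T_c^*XT_c$ with the isometry $\|\mathbb{T}_g\|=\|g\|_\infty$ of Corollary~\ref{c1} gives $\|f_n-f_m\|_\infty=\|\mathbb{T}_{f_n}-\mathbb{T}_{f_m}\|\le\sup_{c\in S}\|T_c^*(A_n-A_m)T_c\|\le\|A_n-A_m\|$, so $(f_n)$ is Cauchy in $C(G)$, converging to some $f\in C(G)$, and $\mathbb{T}_{f_n}\to\mathbb{T}_f$. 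It remains to prove that the net $X_c:=T_c^*AT_c$ converges and that its limit is $\mathbb{T}_f$. For convergence I would run an $\varepsilon/3$-argument: given $\varepsilon>0$, pick $n$ with $\|A-A_n\|<\varepsilon/3$; then $\|X_c-T_c^*A_nT_c\|<\varepsilon/3$ for all $c$, while $(T_c^*A_nT_c)_{c\in S}$ converges (a finite sum of nets each convergent by Lemma~\ref{mon}) and hence is Cauchy, so $\|X_c-X_{c'}\|<\varepsilon$ for $c,c'$ beyond some index; since $C_{red}^*(S)$ is complete, $X:=\varinjlim_{c\in S}X_c$ exists. Passing to the limit in $\|X_c-T_c^*A_nT_c\|\le\|A-A_n\|$ yields $\|X-\mathbb{T}_{f_n}\|\le\|A-A_n\|$, and letting $n\to\infty$ gives $X=\mathbb{T}_f$.

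The step I expect to be the main obstacle is this last one. A priori the net limit $\varinjlim_{c\in S}T_c^*AT_c$ is merely some element of $C_{red}^*(S)$; to name it $\mathbb{T}_f$ one must know it is the image of a \emph{continuous} function on $G$, not just of a bounded function or of a formal Fourier series. This is exactly what the isometric embedding $g\mapsto\mathbb{T}_g$ of $C(G)$ into $C_{red}^*(S)$ provides: it transfers the Cauchy property of $(A_n)$ in $C_{red}^*(S)$ to the symbols $(f_n)$ in $C(G)$, forcing the limiting symbol to be continuous.
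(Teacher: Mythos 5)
Your proof is correct and follows the same route as the paper: reduce to the dense subalgebra $P(S)$, where Lemma~\ref{mon} together with Corollary~\ref{c1} gives $\varinjlim_{c\in S}T_c^*AT_c=\mathbb{T}_{f_A}$ for a trigonometric polynomial $f_A=\sum_i C_i\chi^{\mathrm{ind}V_i}$. The only difference is that the paper dismisses the general case with the bare assertion that it suffices to treat finite linear combinations of monomials, whereas you actually justify that reduction via the contractivity of $X\mapsto T_c^*XT_c$ and the isometry $\|\mathbb{T}_g\|=\|g\|_\infty$, which is a welcome completion rather than a departure.
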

   \begin{proof}
    Достаточно доказать это утверждение для конечной линейной комбинации мономов из $C_{red}^*(S)$. Пусть $$A=\sum\limits_{i=1}^{n}C_i^A V_i,\ C_i^A\in\CC,\ V_i\in S_{red}^*.$$
    Тогда $$\varinjlim\limits_{c\in S}T_c^*AT_c=\sum\limits_{i=1}^{n}C_iT_{d_i},$$
    где $d_i=\mathrm{ind}V_i,\ i=1,2,...n$. Отсюда согласно следствию \ref{c1},
    $$\sum\limits_{i=1}^{n}C_iT_{d_i}=\mathbb{T}_g,\ g=\sum\limits_{i=1}^{n}C_i\chi^{d_i}.$$
      \end{proof}

   \begin{lemma}
   \label{fakt} The quotient algebra $C_{red}^*(S)/K$ is isomorphic to $C(G)$.
   \end{lemma}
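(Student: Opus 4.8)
The plan is to factor the quotient map $C_{red}^*(S)\to C_{red}^*(S)/K$ through an explicit \emph{symbol homomorphism} $\varphi\colon C_{red}^*(S)\to C(G)$ and then to show that $\varphi$ is surjective with kernel exactly $K$. First I would define $\varphi$ using Lemma \ref{naidetsa}: for $A\in C_{red}^*(S)$ the net $(T_c^*AT_c)_{c\in S}$ converges — for $A\in P(S)$ this is Lemma \ref{naidetsa}, and for general $A$ one approximates in norm by polynomials $P_n$, observes $\|T_c^*AT_c-T_c^*P_nT_c\|\le\|A-P_n\|$ uniformly in $c$, and deduces that the net is Cauchy, hence convergent — and its limit is $\mathbb{T}_f$ for a unique $f\in C(G)$, uniqueness because $g\mapsto\mathbb{T}_g$ is isometric (Corollary \ref{c1}). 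Put $\varphi(A):=f$. Since $T_c$ is an isometry, $\|\varphi(A)\|\le\|A\|$, and the computation in the proof of Lemma \ref{naidetsa} gives $\varphi\big(\sum_i C_i V_i\big)=\sum_i C_i\chi^{\mathrm{ind}\,V_i}$ for $\sum_i C_iV_i\in P(S)$. By additivity of the index and the identity $\mathrm{ind}(V^*)=-\mathrm{ind}\,V$ (both following from Lemma \ref{mon}), this formula is multiplicative and $*$-preserving on the dense subalgebra $P(S)$; since multiplication and involution are continuous, $\varphi$ is a $*$-homomorphism on all of $C_{red}^*(S)$.

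Two properties then follow at once. As $\varphi(T_a)=\chi^a$ and $\varphi(T_a^*)=\chi^{-a}$ for $a\in S$, the range of $\varphi$ contains every character $\chi^c$, $c\in\Gamma$; their span is dense in $C(G)$ by the Stone–Weierstrass theorem, and the image of a $*$-homomorphism of $C^*$-algebras is closed, so $\varphi$ is surjective. Since $C(G)$ is commutative, $\varphi(T_cT_d-T_dT_c)=0$, so $\ker\varphi$ is an ideal containing the generators of $K$; hence $K\subseteq\ker\varphi$ and $\varphi$ descends to a surjective $*$-homomorphism $\bar\varphi\colon C_{red}^*(S)/K\to C(G)$.

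To finish I would exhibit the inverse. Let $q\colon C_{red}^*(S)\to C_{red}^*(S)/K$ be the quotient map and $u_a:=q(T_a)$. From $T_a^*T_a=I$ and $T_aT_a^*-T_a^*T_a\in K$ (a generator of $K$, with $c=-a$, $d=a$), each $u_a$ is unitary; the relations in $K$ force the $u_a$ to commute, and $u_au_b=u_{a+b}$, so $a\mapsto u_a$ extends to a unitary representation $c\mapsto u_c$ of the discrete abelian group $\Gamma$ in $C_{red}^*(S)/K$. Either by the universal property of $C^*(\Gamma)\cong C(G)$, or directly by setting $\psi\big(\sum_c C_c\chi^c\big)=\sum_c C_c u_c$ on trigonometric polynomials and extending by continuity, one obtains a $*$-homomorphism $\psi\colon C(G)\to C_{red}^*(S)/K$; it is surjective because $u_a=q(T_a)$ and $u_a^*=q(T_a^*)$ generate $C_{red}^*(S)/K$. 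Evaluating on generators (and using that $\varphi$ is a homomorphism, so $\varphi(T_c)=\chi^c$ for all $c\in\Gamma$) gives $\bar\varphi\psi(\chi^c)=\chi^c$ and $\psi\bar\varphi(q(T_a))=\psi(\chi^a)=u_a=q(T_a)$; by continuity $\bar\varphi\psi=\mathrm{id}$ and $\psi\bar\varphi=\mathrm{id}$, so $\bar\varphi$ is a $*$-isomorphism and $C_{red}^*(S)/K\cong C(G)$.

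The main obstacle is the construction of $\varphi$ in the first paragraph — in particular, extending Lemma \ref{naidetsa} from polynomials to arbitrary $A$ via the uniform-in-$c$ Cauchy argument, and checking that multiplicativity, transparent on $P(S)$ through additivity of the index, survives passage to the norm closure. Once $\varphi$ is available, the genuinely substantive point, $K=\ker\varphi$, comes essentially for free: the inclusion $K\subseteq\ker\varphi$ is immediate, and the reverse inclusion follows because $\bar\varphi$ admits the honest two-sided inverse $\psi$. Surjectivity of $\varphi$ is a routine Stone–Weierstrass argument.
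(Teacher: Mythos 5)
Your proof is correct, and it is worth noting that it is both a different route and a more complete one than the paper's own argument. The paper works in one direction only: it observes that $[V]=[T_{\mathrm{ind}V}]$ in the quotient, deduces that every class $[A]$ equals $[\mathbb{T}_f]$ for some $f\in C(G)$, checks that $f\mapsto[\mathbb{T}_f]$ is a surjective $*$-homomorphism with $\|[\mathbb{T}_f]\|\le\|f\|_\infty$, and then simply asserts that this map is an isomorphism; injectivity (equivalently the reverse estimate $\|\mathbb{T}_f+A\|\ge\|f\|_\infty$ for $A\in K$) is not established there, and in the paper it only appears later, in Theorem \ref{ktp}, whose proof relies on Lemma \ref{k}, which in turn cites the present lemma --- a circularity your argument avoids. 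You instead build the symbol homomorphism $\varphi\colon A\mapsto f$, where $\varinjlim_c T_c^*AT_c=\mathbb{T}_f$ (extending Lemma \ref{naidetsa} from $P(S)$ to all of $C_{red}^*(S)$ by the uniform estimate $\|T_c^*(A-P_n)T_c\|\le\|A-P_n\|$), show $K\subseteq\ker\varphi$ because $C(G)$ is commutative, and then produce the inverse $\psi$ from the universal property of $C^*(\Gamma)\cong C(G)$ applied to the commuting unitaries $u_a=q(T_a)$ (unitarity holding because $I-T_aT_a^*=T_{-a}T_a-T_aT_{-a}\in K$). The identity $\psi\bar\varphi=\mathrm{id}$ on the generators $q(T_a)$, $q(T_a^*)$ then gives injectivity of $\bar\varphi$ outright. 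What the paper's approach buys is brevity; what yours buys is a self-contained proof of injectivity and, as a by-product, the splitting estimate needed for Theorem \ref{ktp}. All the individual steps you flag as delicate (convergence of the net for general $A$, multiplicativity of $\varphi$ passing to the closure, well-definedness of $u_{b-a}=u_a^*u_b$ on $\Gamma=S-S$) do go through as you describe.
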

   \begin{proof}
   Denote by $[T_c]$ equivalence class of monomial $T_c$ by ideal $K$. Obviously,
   $$[T_c][T_d]=[T_{c+d}].$$
   Hence, for a monomial $V$ with $c=\mathrm{ind}V$, we have $[T_c]=[V]$. It implies that for $$A=\sum\limits_{i=1}^{n}C_i^AV_i, $$
   we have the following relation for equivalence classes
   $$[A]=\sum\limits_{i=1}^{n}C_i^A[T_{d_i}], $$
   where $d_i=\mathrm{ind}V_i$. Therefore, $[A]=[\mathbb{T}_f]$, $f=\sum\limits_{i=1}^{n}C_i^A\chi^{d_i}$. Finite linear combinations of monomials are dense in $C_{red}^*(S)$. Thus, for any $A\in C_{red}^*(S)$ there exists $f\in C(G)$ such that $[A]=[\mathbb{T}_f]$.
   
   Note that if $[A]=[\mathbb{T}_f]$, $[B]=[\mathbb{T}_g]$, то $[A]+[B]=[\mathbb{T}_{f+g}]$ and $[A\cdot B]=[\mathbb{T}_{f\cdot g}]$ with $\|[\mathbb{T}_f]\|\leq \|\mathbb{T}_f\|=\|f\|_\infty$. Consequently, mapping $f\to[\mathbb{T}_f]$ is a *-isomorphism between $C^\ast$-algebras $C(G)$ and $C_{red}^*(S)/K$.  \end{proof}

   \begin{lemma}
   \label{k}An element $A\in C_{red}^*(S)$ lies in $K$ if and only if
   $$\varinjlim\limits_{c\in S}T_c^*AT_c=0.$$
   \end{lemma}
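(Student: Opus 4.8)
The plan is to realize the map $\Phi(A):=\varinjlim_{c\in S}T_{c}^{*}AT_{c}$ as the composition of the quotient map onto $C_{red}^{*}(S)/K\cong C(G)$ with the isometric embedding $C(G)\hookrightarrow C_{red}^{*}(S)$; then $K=\ker\Phi$, which is exactly the statement of the lemma. Concretely, write $\iota\colon C(G)\to C_{red}^{*}(S)$, $\iota(g)=\mathbb{T}_{g}$, for the isometric embedding afforded by Corollary~\ref{c1}, let $q\colon C_{red}^{*}(S)\to C_{red}^{*}(S)/K$ be the quotient map, and let $j\colon C(G)\to C_{red}^{*}(S)/K$, $j(g)=[\mathbb{T}_{g}]$, be the isomorphism of Lemma~\ref{fakt}; set $\rho:=j^{-1}\circ q$, a surjective *-homomorphism with $\ker\rho=\ker q=K$. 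Everything then reduces to the identity $\Phi=\iota\circ\rho$: since $\iota$ is injective, $\ker\Phi=\ker(\iota\circ\rho)=\ker\rho=K$, i.e.\ $A\in K$ if and only if $\varinjlim_{c\in S}T_{c}^{*}AT_{c}=0$.

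First I would record that $\Phi$ is a well-defined linear contraction with range inside $\iota(C(G))$. By Lemma~\ref{naidetsa} the net $c\mapsto T_{c}^{*}AT_{c}$ converges in norm for every $A\in C_{red}^{*}(S)$, and its limit is $\mathbb{T}_{f}$ for some $f\in C(G)$; since each $T_{c}$ is an isometry we have $\|T_{c}^{*}AT_{c}\|\le\|A\|$, hence $\|\Phi(A)\|\le\|A\|$, while linearity of $\Phi$ is clear because $B\mapsto T_{c}^{*}BT_{c}$ is linear and norm limits are linear.

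Next I would verify the identity $\Phi=\iota\circ\rho$ on the dense subalgebra $P(S)$ of finite linear combinations of monomials; density together with continuity of both sides then promotes it to all of $C_{red}^{*}(S)$. Let $V\in S_{red}^{*}$ be a monomial with $d=\mathrm{ind}V$. By Lemma~\ref{mon}, $\Phi(V)=T_{d}$. On the other hand, modulo $K$ every monomial is equivalent to $T_{\mathrm{ind}V}$ (as shown inside the proof of Lemma~\ref{fakt}), so $q(V)=[T_{d}]=[\mathbb{T}_{\chi^{d}}]=j(\chi^{d})$, whence $\rho(V)=\chi^{d}$ and $\iota\rho(V)=\mathbb{T}_{\chi^{d}}=T_{d}$. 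Thus $\Phi$ and $\iota\circ\rho$ agree on every monomial, hence by linearity on $P(S)$, and finally on $C_{red}^{*}(S)$.

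With $\Phi=\iota\circ\rho$ established, the two implications follow immediately: if $\Phi(A)=0$ then $A\in\ker\rho=K$, and conversely if $A\in K=\ker\rho$ then $\Phi(A)=\iota(\rho(A))=0$. I expect the only genuinely delicate point to be the one already packaged into Lemma~\ref{naidetsa}: that the net $T_{c}^{*}AT_{c}$ converges in norm for an \emph{arbitrary} $A\in C_{red}^{*}(S)$, and not only on $P(S)$ where it is eventually constant. Once that is granted, the continuity of $\Phi$ needed to pass to the closure costs nothing, and everything else is routine bookkeeping with the $\Gamma$-grading and with the quotient by $K$.
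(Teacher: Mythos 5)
Your proof is correct and rests on exactly the same ingredients the paper uses: Lemma \ref{naidetsa} for the norm convergence of the net $T_c^*AT_c$ to some $\mathbb{T}_f$, Lemma \ref{fakt} for the identification $C_{red}^*(S)/K\cong C(G)$, and the index computation of Lemma \ref{mon} on monomials. The paper proves the forward implication by contradiction in the quotient and calls the converse obvious, while you package both directions at once into the factorization $\Phi=\iota\circ\rho$ and take kernels; this is the same route, merely organized more cleanly.
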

   \begin{proof}
   Suppose $\varinjlim\limits_{c\in S}T_c^*AT_c=0 $ herewith $A\notin K$. Then $[A]=\mathbb{T}_f\neq 0$ for some function $f$ with the following relation $$[T_c^*AT_c]=[T_c^*T_fT_c]=[\mathbb{T}_f].$$
   The last relation follows from Lemma \ref{naidetsa}. This causes a contradiction. The reverse statement is obvious. 
   \end{proof}
   
   \begin{theorem}
   \label{ktp} A short exact sequence
   $$ 0\to K \xrightarrow{\mathrm{id}}C_{red}^*(S)\xrightarrow{\xi}C(G)\to 0,$$
   where $\mathrm{id}$ is an embedding and $\xi$ is a quotient map, splits by means of involution-preserving map $\rho\colon C(G)\to C_{red}^*(S)$ such that $\xi\circ\rho=\mathrm{id}$.
     \end{theorem}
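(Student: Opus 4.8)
The plan is to exhibit an explicit set-theoretic section and verify it is involution-preserving and splits $\xi$. The natural candidate is $\rho = (f \mapsto \mathbb{T}_f)$, the isometric embedding of $C(G)$ into $C_{red}^*(S)$ constructed just before the statement via $f \mapsto P_S T_f P_S$ and Fourier transform. By Corollary~\ref{c1} this map is a well-defined linear isometry, and by the relations recorded there it is involution-preserving: $(\mathbb{T}_f)^* = \mathbb{T}_{\bar f}$, since $P_S T_f P_S$ has adjoint $P_S T_{\bar f} P_S$. So the only substantive thing to check is $\xi \circ \rho = \mathrm{id}_{C(G)}$.

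First I would unwind what $\xi$ does. By Lemma~\ref{fakt}, the quotient $C_{red}^*(S)/K$ is identified with $C(G)$ precisely through the correspondence $[\mathbb{T}_g] \leftrightarrow g$; that is, the isomorphism realizing $C_{red}^*(S)/K \cong C(G)$ sends the class $[\mathbb{T}_g]$ to $g$. Under this identification $\xi$ is the composite of the quotient projection $C_{red}^*(S) \to C_{red}^*(S)/K$ with that isomorphism. Hence for $f \in C(G)$ we have $\xi(\rho(f)) = \xi(\mathbb{T}_f) = [\mathbb{T}_f] \mapsto f$, which is exactly $f$. So $\xi \circ \rho = \mathrm{id}$ follows directly once the identification in Lemma~\ref{fakt} is spelled out. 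To make this airtight I would note that the map $f \mapsto [\mathbb{T}_f]$ is injective on $C(G)$: if $[\mathbb{T}_f] = 0$ then $\mathbb{T}_f \in K$, so by Lemma~\ref{k} $\varinjlim_{c\in S} T_c^* \mathbb{T}_f T_c = 0$; but by Lemma~\ref{naidetsa} (or Corollary~\ref{c1}) this limit equals $\mathbb{T}_f$ itself when $f$ is a polynomial, forcing $f=0$, and the general case follows by density and the isometry $\|\mathbb{T}_f\| = \|f\|_\infty$. Thus $f \mapsto [\mathbb{T}_f]$ is the inverse of the quotient description, and $\xi(\mathbb{T}_f) = f$ genuinely holds.

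It then remains only to assemble the pieces into the language of split exact sequences: $\rho \colon C(G) \to C_{red}^*(S)$ is linear, bounded (indeed isometric), satisfies $\rho(\bar f) = \rho(f)^*$, and $\xi \circ \rho = \mathrm{id}_{C(G)}$, so the sequence $0 \to K \to C_{red}^*(S) \to C(G) \to 0$ splits as a sequence of Banach spaces compatibly with involution. I would be careful to state only what is claimed — $\rho$ is \emph{not} asserted to be multiplicative, which is consistent with the earlier remark that $\mathbb{T}_f \mathbb{T}_g = \mathbb{T}_{fg}$ holds only when $sp(f), sp(g) \subseteq S$, so $\rho$ is a section, not a $C^\ast$-algebra splitting.

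The main obstacle is essentially bookkeeping rather than a deep point: one must be sure that the identification $C_{red}^*(S)/K \cong C(G)$ used to define $\xi$ is literally the map $[\mathbb{T}_f] \mapsto f$ and not some other isomorphism, and that $\rho = \mathbb{T}_{(\cdot)}$ lands on the right representatives. Once Lemma~\ref{fakt} and Lemma~\ref{k} are invoked in the form above, everything collapses to a one-line verification; the care needed is only in confirming well-definedness of $\rho$ on all of $C(G)$ (not just polynomials), which is exactly the content of the isometry estimate $\|\mathbb{T}_f\| = \|f\|_\infty$ from Corollary~\ref{c1} together with density of trigonometric polynomials in $C(G)$.
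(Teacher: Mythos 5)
Your proposal is correct and follows essentially the same route as the paper: both take $\rho(f)=\mathbb{T}_f$ as the section and rely on Corollary~\ref{c1}, Lemma~\ref{naidetsa}, Lemma~\ref{k} and the identification of Lemma~\ref{fakt}. The only difference in emphasis is that the paper records the norm estimate $\|\mathbb{T}_f+A\|\geq\varinjlim_{c\in S}\|T_c^*(\mathbb{T}_f+A)T_c\|=\|\mathbb{T}_f\|$ for $A\in K$ to exhibit the topological direct sum $C_{red}^*(S)=\rho(C(G))\oplus K$, whereas you verify $\xi\circ\rho=\mathrm{id}$ directly; these amount to the same thing.
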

   \begin{proof}
   Define an isometric map $\rho\colon C(G)\to C_{red}^*(S)$, which maps $f\to \mathbb{T}_f$. For any $A\in K$ we have
   $$\|\mathbb{T}_f+A\|\geq\|\mathbb{T}_f\|.$$
   Indeed, $$\|\mathbb{T}_f+A\|\geq \varinjlim\limits_{c\in S}\|T_c^*(\mathbb{T}_f+A)T_c\|=\|\mathbb{T}_f\|.$$
   Therefore, $C_{red}^*(S)=\rho(C(G))\bigoplus K$.\end{proof}

 \section{Quantization of semigroup algebras}
 
\subsection{Compact quantum semigroup}

   Let $K$ be a compact semigroup and denote by $C(K)$ an algebra of continuous functions on $K$. It was shown in introduction that this algebra is endowed with a comultiplication $\Delta$ such that $(C(K),\Delta)$ reflects the structure of a compact semigroup $K$. But not every pair $(C(K),\Delta)$ with a coassociative *-homomorphism $\Delta\colon C(K)\to C(K)\otimes C(K) $ encodes the structure of $K$. One can see this for comultiplication defined by $\Delta(f)=f\otimes 1$. This motivates us to search for a quantization of semigroup which reflects the structure and properties of initial semigroup.
   
   The main aim of this section is to put in correspondence a compact quantum semigroup for any abelian cancellative semigroup. For this purpose we shall use the theory of $C^\ast$-algebras $C^*_{red}(S)$ and results of the previous section. But in order to construct compact quantum semigroups we introduce a bit different definition.
    
 Let $\mathcal{A}$ be a $C^\ast$-алгебра, and $\mathcal{A}\odot\mathcal{A}$ algebraic tensor product. Faithful representations $\pi_1\colon\mathcal{A}\to B(H_1)$ and $\pi_2\colon \mathcal{A}\to B(H_2)$ generate representation $\pi=\pi_1\otimes\pi_2\colon \mathcal{A}\odot\mathcal{A}\to B(H_1)\otimes B(H_2)$. Closure of $\pi(\mathcal{A}\odot\mathcal{A})$ in operator norm is a tensor product $\mathcal{A}\otimes_\pi\mathcal{A}$, generated by representation $\pi$. 
 
 Consider a dual space $(\mathcal{A}\otimes_\pi\mathcal{A})^*$ to the $C^\ast$-algebra $\mathcal{A}\otimes_\pi\mathcal{A}$, i.e. a space of all linear continuous functionals on $\mathcal{A}\otimes_\pi\mathcal{A}$. For any $\xi,\eta\in (\mathcal{A})^*$ there exists a unique functional $\xi\otimes\eta$ on $\mathcal{A}\otimes_\pi\mathcal{A}$, such that
 $$(\xi\otimes\eta)(A\otimes B)=\xi(A)\eta(B),\ A,B\in\mathcal{A}.$$
 Therefore one can define $(\mathcal{A})^*\otimes_\pi(\mathcal{A})^*$ as a closure of $(\mathcal{A})^*\odot(\mathcal{A})^*$ in Banach space $(\mathcal{A}\otimes_\pi\mathcal{A})^*$. Note that $(\mathcal{A})^*\otimes_\pi(\mathcal{A})^*=(\mathcal{A}\otimes_\pi\mathcal{A})^*$ if and only if $\mathcal{A}$ is a finite-dimensional $C^\ast$-algebra.
 
 Every embedding  $\Delta\colon\mathcal{A}\to\mathcal{A}\otimes_\pi\mathcal{A}$ induces a Banach algebra structure on $\mathcal{A}^*$: \emph{ product of $\xi,\eta\in \mathcal{A}^*$ is a functional } $\xi\times\eta\in\mathcal{A}^*$ such that
 $$(\xi\times\eta)(A)=(\xi\otimes\eta)\Delta(A).$$ 
 Note that $$\|\xi\times\eta\|\leq \|\xi\|\cdot\|\eta\|.$$
 Generally, Banach algebra $\mathcal{A}^*$ need not to be associative. 
 
 If $\Delta\colon\mathcal{A}\to\mathcal{A}\otimes_\pi\mathcal{A}$ is not only an embedding but also a *-homomorphism and $\mathcal{A}^*$ is an associative Banach algebra, then we call $\Delta$ a $\pi_1\otimes\pi_2$-comultiplication, and pair $(\mathcal{A},\Delta)$ is called \emph{a compact quantum semigroup}.
 
 If any functionals commute, $\xi\times\eta=\eta\times\xi$, then quantum semigroup $(\mathcal{A},\Delta)$ is called \emph{cocommutative}. Now the coassociativity condition for $\Delta$ makes no sense: $$(I\otimes\Delta)\Delta=(\Delta\otimes I)\Delta.$$
 \begin{proposition}
 \label{p511} Let $\Delta\colon\mathcal{A}\to\mathcal{A}\otimes_\pi\mathcal{A}$ be a comultiplication. If $\pi_1',\pi_2'$ are faithful *-representations  of algebra $\mathcal{A}$, then there exists comultiplication $\Delta'\colon\mathcal{A}\to\mathcal{A}\otimes_\pi'\mathcal{A}$, where $\pi'=\pi_1'\otimes\pi_2'$.
 \end{proposition}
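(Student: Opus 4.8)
\emph{Proof proposal.} The plan is to reduce everything to the classical fact that the spatial ($=$ minimal) $C^\ast$-tensor norm on $\mathcal{A}\odot\mathcal{A}$ does not depend on the choice of faithful $\ast$-representations. First I would recall that, since $\pi_1,\pi_2$ (and $\pi_1',\pi_2'$) are faithful, the closure of $(\pi_1\otimes\pi_2)(\mathcal{A}\odot\mathcal{A})$ in $B(H_1\otimes H_2)$ is precisely the minimal tensor product $\mathcal{A}\otimes\mathcal{A}$, and by Takesaki's theorem the induced $C^\ast$-norm on $\mathcal{A}\odot\mathcal{A}$ agrees with the one coming from $\pi_1'\otimes\pi_2'$. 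Hence the identity map of $\mathcal{A}\odot\mathcal{A}$ is isometric for the two norms and extends to a canonical $\ast$-isomorphism $\Phi\colon\mathcal{A}\otimes_\pi\mathcal{A}\to\mathcal{A}\otimes_{\pi'}\mathcal{A}$ with $\Phi(A\otimes B)=A\otimes B$ for all $A,B\in\mathcal{A}$.

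Next I would set $\Delta'=\Phi\circ\Delta$. Since $\Delta$ is an embedding and a $\ast$-homomorphism and $\Phi$ is a $\ast$-isomorphism, $\Delta'\colon\mathcal{A}\to\mathcal{A}\otimes_{\pi'}\mathcal{A}$ is again an embedding and a $\ast$-homomorphism. It then remains to verify that the Banach algebra structure that $\Delta'$ induces on $\mathcal{A}^*$ is associative. Here I would observe that for $\xi,\eta\in\mathcal{A}^*$ the functionals $\xi\otimes\eta$ on $\mathcal{A}\otimes_\pi\mathcal{A}$ and on $\mathcal{A}\otimes_{\pi'}\mathcal{A}$ are both uniquely determined by the value $\xi(A)\eta(B)$ on elementary tensors; since $\Phi$ fixes elementary tensors, this gives $(\xi\otimes\eta)_{\pi'}\circ\Phi=(\xi\otimes\eta)_{\pi}$ on all of $\mathcal{A}\otimes_\pi\mathcal{A}$. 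Consequently, writing $\times'$ for the product induced by $\Delta'$, one gets $(\xi\times'\eta)(A)=(\xi\otimes\eta)_{\pi'}(\Delta'(A))=(\xi\otimes\eta)_{\pi'}(\Phi(\Delta(A)))=(\xi\otimes\eta)_{\pi}(\Delta(A))=(\xi\times\eta)(A)$, so the product on $\mathcal{A}^*$ induced by $\Delta'$ coincides with the one induced by $\Delta$. As $(\mathcal{A},\Delta)$ is a compact quantum semigroup, that product is associative, and therefore $\Delta'$ is a $\pi_1'\otimes\pi_2'$-comultiplication and $(\mathcal{A},\Delta')$ is a compact quantum semigroup.

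The only genuinely nontrivial ingredient is the independence of the spatial tensor norm on the choice of faithful representations, i.e.\ the existence of the $\ast$-isomorphism $\Phi$; everything after that is bookkeeping. The one step that deserves care is the identification $(\xi\otimes\eta)_{\pi'}\circ\Phi=(\xi\otimes\eta)_{\pi}$, since it is exactly this that transports associativity of $\mathcal{A}^*$ from $\Delta$ to $\Delta'$ rather than forcing one to re-establish it from scratch.
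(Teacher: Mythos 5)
Your proposal is correct and follows essentially the same route as the paper: the paper likewise obtains the canonical isomorphism $I\otimes I\colon\mathcal{A}\otimes_\pi\mathcal{A}\to\mathcal{A}\otimes_{\pi'}\mathcal{A}$ induced by the identity on $\mathcal{A}$, sets $\Delta'=(I\otimes I)\Delta$, and reduces everything to checking $(\xi\otimes\eta)\Delta'=(\xi\otimes\eta)\Delta$. Your write-up merely makes explicit the two points the paper leaves implicit, namely the appeal to the independence of the spatial norm on the choice of faithful representations and the transport of associativity of the product on $\mathcal{A}^*$.
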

 \begin{proof}
 An identity map $\mathrm{id}$ on $\mathcal{A}$ induces a canonical isomorphism $I\otimes I\colon \mathcal{A}\otimes_\pi\mathcal{A}\to \mathcal{A}\otimes_\pi'\mathcal{A}$.  Define $\Delta'=(I\otimes I)\Delta$. It is sufficient to show $(\xi\otimes\eta)\Delta'=(\xi\otimes\eta)\Delta$ for any $\xi,\eta\in \mathcal{A}^*$. Indeed, we have $$(\xi\otimes\eta)\Delta'(A)=(\xi\otimes\eta)(I\otimes I)\Delta(A)=(\xi\otimes\eta)\Delta(A).$$
 \end{proof}
This proposition implies the following statement, which shows the equivalence of two definitions.
 \begin{corollary}\label{eq}
 Following conditions are equivalent:
 \begin{enumerate}
 \item There exists a (standard) comultiplication $\Delta\colon\mathcal{A}\to \mathcal{A}\otimes_{min}\mathcal{A}$
 \item For any faithful *-representations $\pi_1,\pi_2$ of the $C^\ast$-algebra $\mathcal{A}$ there exists a comultiplication $\Delta'\colon\mathcal{A}\to \mathcal{A}\otimes_\pi\mathcal{A}$, where $\pi=\pi_1\otimes\pi_2$.
 \end{enumerate}
 
 \end{corollary}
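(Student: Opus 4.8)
The plan is to obtain both implications as essentially immediate consequences of Proposition \ref{p511}, the only extra ingredient being the classical fact that the spatial ($=$ minimal) $C^\ast$-tensor product norm on $\mathcal{A}\odot\mathcal{A}$ is independent of the faithful representations used to build it. First I would record this in the notation of the section: if $\pi_1^0,\pi_2^0$ are faithful $\ast$-representations of $\mathcal{A}$ and $\pi^0=\pi_1^0\otimes\pi_2^0$, then $\mathcal{A}\otimes_{\pi^0}\mathcal{A}$ coincides, as a $C^\ast$-algebra, with $\mathcal{A}\otimes_{min}\mathcal{A}$; and for any other faithful $\pi_1,\pi_2$ the identity of $\mathcal{A}\odot\mathcal{A}$ extends to the canonical $\ast$-isomorphism $I\otimes I\colon\mathcal{A}\otimes_{\pi^0}\mathcal{A}\to\mathcal{A}\otimes_\pi\mathcal{A}$ with $\pi=\pi_1\otimes\pi_2$. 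This is precisely the map used in the proof of Proposition \ref{p511}.

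For $(1)\Rightarrow(2)$ I would regard a standard comultiplication $\Delta\colon\mathcal{A}\to\mathcal{A}\otimes_{min}\mathcal{A}$ as a $\pi_1^0\otimes\pi_2^0$-comultiplication, since $\mathcal{A}\otimes_{min}\mathcal{A}=\mathcal{A}\otimes_{\pi^0}\mathcal{A}$ and $\pi_1^0,\pi_2^0$ are faithful. Then, for arbitrary faithful $\pi_1,\pi_2$, Proposition \ref{p511} applied with $\pi_1'=\pi_1$, $\pi_2'=\pi_2$ produces $\Delta'=(I\otimes I)\Delta\colon\mathcal{A}\to\mathcal{A}\otimes_\pi\mathcal{A}$. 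As shown there, $\Delta'$ is again a $\ast$-homomorphism and an embedding, and it induces on $\mathcal{A}^\ast$ the same product $\xi\times\eta=(\xi\otimes\eta)\Delta$, hence the same associative Banach algebra structure; so $\Delta'$ is a comultiplication into $\mathcal{A}\otimes_\pi\mathcal{A}$, which is (2).

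For $(2)\Rightarrow(1)$ I would simply specialise the hypothesis to the faithful representations $\pi_1=\pi_1^0$, $\pi_2=\pi_2^0$ that define the minimal tensor product: (2) then yields a comultiplication $\Delta'\colon\mathcal{A}\to\mathcal{A}\otimes_{\pi^0}\mathcal{A}=\mathcal{A}\otimes_{min}\mathcal{A}$, i.e. a standard comultiplication, which is (1).

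I do not expect a real obstacle here, as Proposition \ref{p511} already carries out the transport of $\Delta$ and the verification that the product on $\mathcal{A}^\ast$ is preserved. The one point worth stating carefully is the identification $\mathcal{A}\otimes_{min}\mathcal{A}=\mathcal{A}\otimes_{\pi^0}\mathcal{A}$, and, more broadly, that the canonical $\ast$-isomorphism $I\otimes I$ between $\mathcal{A}\otimes_\pi\mathcal{A}$ and $\mathcal{A}\otimes_{\pi'}\mathcal{A}$ genuinely exists for all pairs of faithful representations; this is exactly the independence of the spatial $C^\ast$-norm from the choice of faithful representations, and it is also what makes the quantifier ``for any faithful $\ast$-representations'' in clause (2) well posed.
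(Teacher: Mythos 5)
Your proposal is correct and follows exactly the route the paper intends: the corollary is stated there as an immediate consequence of Proposition \ref{p511}, with $(1)\Rightarrow(2)$ obtained by transporting $\Delta$ along the canonical isomorphism $I\otimes I$ and $(2)\Rightarrow(1)$ by specialising to the faithful representations realising the minimal tensor product. Your explicit remark that the independence of the spatial $C^\ast$-norm from the choice of faithful representations is what makes $I\otimes I$ exist is the right point to flag, and nothing further is needed.
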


  We provide this definition with following examples.
  
   Let $C(S^1)$ denote the $C^\ast$-algebra of all continuous functions on a unit circle $S^1$, and $M(S^1)=(C(S^1))^*$ -- a space of all regular Borel measures on $S^1$.
   
   \begin{example}
   \label{prim321} Define $\Delta\colon C(S^1)\to C(S^1)\otimes C(S^1)$ by setting $\Delta(e^{in\theta})=e^{in\theta}\otimes e^{in\tau}$, $n\in\mathbb{Z}$. By virtue of Grothendieck Theorem, we have $C(S^1)\otimes C(S^1)=C(S^1\times S^1)$. Due to this equation, the map $\Delta$ has the following form:
   $$\Delta(f)(e^{i\theta},e^{i\tau})=f(e^{i\theta}\cdot e^{i\tau}).$$
   In this case a pair $(C(S^1),\Delta)$ is a compact quantum group and $M(S^1)$ is an associative Banach algebra with respect to convolution.
   \end{example}
   
   \begin{example}
   \label{prim322} Define $\Delta_2\colon C(S^1)\to C(S^1)\otimes C(S^1)$, which maps $e^{i\theta}\to e^{i2\theta}\otimes e^{i2\tau}$, i.e. $\Delta(f)(e^{i\theta,e^{i\tau}})=f(e^{i2\theta}\cdot e^{i2\tau})$. Then $(C(S^1),\Delta)$ is not a compact quantum semigroup. The space $M(S^1)$ is a non-associative Banach algebra with a zero divisor in this case.
   \end{example}
 
 Recall that a compact quantum semigroup $(\mathcal{A},\Delta)$ is called \emph{a compact quantum group}, if linear spaces $(\mathcal{A}\otimes_{min} I)\Delta(\mathcal{A})$ and $(I\otimes_{min} \mathcal{A})\Delta(\mathcal{A})$ are dense in $\mathcal{A}\otimes_{min}\mathcal{A}$.   
 
 \subsection{Quantization of the semigroup $S$}\label{copr}
 
 In this section we give a comutiplication $\Delta\colon C^*_{red}(S)\to C^*_{red}(S)\otimes C^*_{red}(S)$ and show some properties of a compact quantum semigroup $(C^*_{red}(S),\Delta)$.
 
 Cartesian product $\Gamma^2=\Gamma\times\Gamma$ of the group $\Gamma$ is endowed with a natural group structure: $(a,b)+(c,d)=(a+c,b+d)$. Obviously, semigroup $S^2=S\times S$ generates group $\Gamma$. Let $C^*_{red}(S^2)$ be a $C^\ast$-algebra on $l^2(S^2)$, generated by a regular representation of $S^2$.
 
 \begin{lemma}
 \label{s2} The following equations hold
 $$l^2(S^2)=l^2(S)\otimes l^2(S),$$
 $$C^*_{red}(S^2)=C^*_{red}(S)\otimes C^*_{red}(S).$$
 
 \end{lemma}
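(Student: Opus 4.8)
The plan is to prove both identities by first fixing an explicit unitary identification of the underlying Hilbert spaces and then tracking the generating shift operators through it. For the first equality, the assignment $e_{(a,b)}\mapsto e_a\otimes e_b$ sends the orthonormal basis $\{e_{(a,b)}\}_{(a,b)\in S^2}$ of $l^2(S^2)$ bijectively onto the orthonormal basis $\{e_a\otimes e_b\}_{(a,b)\in S^2}$ of $l^2(S)\otimes l^2(S)$, hence extends to a unitary $U\colon l^2(S^2)\to l^2(S)\otimes l^2(S)$; this is the first equation, and from here on I identify the two spaces via $U$.

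Next I would compute the images of the generators of $C^*_{red}(S^2)$. The regular representation of $S^2$ satisfies $T_{(a,b)}e_{(c,d)}=e_{(a+c,\,b+d)}$, so under $U$ the operator $T_{(a,b)}$ is carried to $T_a\otimes T_b$, where $T_a,T_b$ denote the shift operators generating $C^*_{red}(S)$ on $l^2(S)$; consequently $T_{(a,b)}^*$ is carried to $T_a^*\otimes T_b^*$. Thus $C^*_{red}(S^2)$ is exactly the $C^\ast$-subalgebra of $B(l^2(S)\otimes l^2(S))$ generated by the family $\{T_a\otimes T_b,\ T_a^*\otimes T_b^*: a,b\in S\}$. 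Each generator lies in the algebraic tensor product $C^*_{red}(S)\odot C^*_{red}(S)$, whose norm closure in $B(l^2(S)\otimes l^2(S))$ is, by the concrete description of the minimal tensor product relative to these faithful representations, $C^*_{red}(S)\otimes C^*_{red}(S)$. This already gives the inclusion $C^*_{red}(S^2)\subseteq C^*_{red}(S)\otimes C^*_{red}(S)$.

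For the reverse inclusion I would invoke the standing hypothesis $0\in S$, so that $T_0$ is the identity operator $I$ on $l^2(S)$. Then $T_a\otimes I=T_a\otimes T_0$ and $I\otimes T_b=T_0\otimes T_b$, together with their adjoints, all belong to $C^*_{red}(S^2)$. Hence, for any monomial $V$ in the operators $T_a,T_a^*$ and any monomial $W$ in the operators $T_b,T_b^*$, both $V\otimes I$ and $I\otimes W$ lie in $C^*_{red}(S^2)$, and therefore so does $V\otimes W=(V\otimes I)(I\otimes W)$. Taking finite linear combinations shows $P(S)\odot P(S)\subseteq C^*_{red}(S^2)$; since $P(S)$ is dense in $C^*_{red}(S)$ and the minimal norm is a cross norm, $P(S)\odot P(S)$ is dense in $C^*_{red}(S)\otimes C^*_{red}(S)$, and as $C^*_{red}(S^2)$ is norm-closed we obtain $C^*_{red}(S)\otimes C^*_{red}(S)\subseteq C^*_{red}(S^2)$. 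Combining the two inclusions yields the second equation.

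The only delicate point — the nearest thing to an obstacle — is the passage from ``the $C^\ast$-algebra generated by the balanced elementary tensors $T_a\otimes T_b$'' to ``the full minimal tensor product''. A priori the former could be a proper subalgebra (it obviously contains all $V\otimes W$ with $\mathrm{ind}\,V$ and $\mathrm{ind}\,W$ suitably related, but one must produce arbitrary pairs $V\otimes W$). This is precisely where $0\in S$ is used: it lets one split off the factors $V\otimes I$ and $I\otimes W$ separately, and hence recover all of $P(S)\odot P(S)$ rather than only the subalgebra generated by the $T_a\otimes T_b$. Everything else is routine density bookkeeping.
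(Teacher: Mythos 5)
Your proof is correct and follows essentially the same route as the paper's: the unitary $U$ induced by $e_{(a,b)}\mapsto e_a\otimes e_b$ together with the identification $UT_{(a,b)}U^*=T_a\otimes T_b$. The paper simply asserts that this correspondence extends to a $*$-isomorphism onto $C^*_{red}(S)\otimes C^*_{red}(S)$; your use of the standing hypothesis $0\in S$ (so $T_0=I$) to recover $V\otimes I$ and $I\otimes W$, and hence all of $P(S)\odot P(S)$, supplies exactly the surjectivity argument that the paper leaves implicit.
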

 \begin{proof}
 Mapping $e_{(a,b)}\to e_a\otimes e_b$ extends to a linear isometric map $U\colon l^2(S^2)\to l^2(S)\otimes l^2(S)$, taking the basis $\{e_{(a,b)} \}_{(a,b)\in S^2}$  of the space $l^2(S^2)$ to the basis $\{e_{(a,b)} \}_{(a,b)\in S^2}$ in $l^2(S)\otimes l^2(S)$.
 
 This induces an operator $T_{(a,b)}\to T_a\otimes T_b$, $(a,b)\in S^2$. Obviously,  $$T_{(a,b)}=U^* T_a\otimes T_b U.$$
 Therefore the map $T_{(a,b)}\to T_a\otimes T_b$ extends to an *-isomorphism $C^*_{red}(S^2)\to C^*_{red}(S)\otimes C^*_{red}(S)$.  \end{proof}
 
 Note that $(S_{red}^2)^*=S_{red}^*\times S_{red}^*$, where $(S_{red}^2)^*$ is an inverse semigroup generated by monomials in $C^*_{red}(S^2)$ (see section \ref{algebra}), and $P(S^2)=P(S)\odot P(S)$.
 
 We start a construction of quantum semigroup on $C^*_{red}(S)$ with its subalgebra $P(S)$. Define $\Delta\colon P(S)\to P(S)\otimes P(S)$ by relation
 $$\Delta(\sum\limits_{i=1}^{n}\lambda_i V_i)=\sum\limits_{i=1}^{n} \lambda_i V_i\otimes V_i, $$
 where $V_i$ is a monomial. 
 
 Now our aim is to show that this comultiplication extends to an embedding $\Delta\colon C^*_{red}(S)\to C^*_{red}(S)\otimes C^*_{red}(S)$. To this end, divide the basis $\{e_c\otimes e_d\}_{c,d\in S}$ of the Hilbert space $l^2(S)\otimes l^2(S)$ to equivalence classes in the following way.
 
 We say that elements $e_c\otimes e_d$ and $e_l\otimes e_k$ of the basis in $l^2(S)\otimes l^2(S)$  \emph{equivalent} ($e_c\otimes e_d \sim e_l\otimes e_k$), if $e_{c+a}\otimes e_{d+a}=e_{l+b}\otimes e_{k+b}$ for some $a,b\in S$. One can easily see that  $e_c\otimes e_d\sim e_l\otimes e_k$ and $e_l\otimes e_k\sim e_m\otimes e_n$ imply $e_c\otimes e_d\sim e_m\otimes e_n$. Note that $e_c\otimes e_d\sim e_l\otimes e_k$ if and only if $d-c=k-l$. Denote by $H_c$ a Hilbert subspace in $l^2(S)\otimes l^2(S)$, generated by family $\{e_l\otimes e_k| k-l=c,\ k,l\in S\}$. Obviously, $l^2(S)\otimes l^2(S)=\bigoplus\limits_{c\in\Gamma}H_c$. Each of the spaces $H_c$ is invariant under $C^\ast$-algebra $\Delta(P(S))$. Indeed, let $V$ be a monomial, $e_l\otimes e_k\in H_c$ and $(V\otimes V)e_l\otimes e_k\neq 0$. Due to Lemma \ref{l441}, 
 $$(V\otimes V)(e_l\otimes e_k)=e_{l+c}\otimes e_{k+c},$$
 where $c=\mathrm{ind}V$.
 \begin{lemma}\label{cred}
  Operator $\Delta\colon P(S)\to P(S)\otimes P(S)$ extends to an embedding $\Delta\colon C^*_{red}(S)\to C^*_{red}(S)\otimes C^*_{red}(S)$.
 \end{lemma}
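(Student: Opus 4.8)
The plan is to show that $\Delta$, which is a $\ast$-homomorphism of the $\ast$-algebra $P(S)$ (on monomials $V\mapsto V\otimes V$ is multiplicative, $(V_1\otimes V_1)(V_2\otimes V_2)=V_1V_2\otimes V_1V_2$, and $\ast$-preserving), is contractive in the norm of $C^*_{red}(S)$, so that it extends to a $\ast$-homomorphism $C^*_{red}(S)\to C^*_{red}(S)\otimes C^*_{red}(S)$, and that the extension is isometric — hence injective, i.e.\ an embedding. Since $l^2(S)\otimes l^2(S)=\bigoplus_{c\in\Gamma}H_c$ with each $H_c$ invariant under $\Delta(P(S))$, one has $\Delta(A)=\bigoplus_{c\in\Gamma}\Delta(A)|_{H_c}$ and $\|\Delta(A)\|=\sup_{c}\|\Delta(A)|_{H_c}\|$, so everything reduces to the fibrewise estimates $\|\Delta(A)|_{H_c}\|\le\|A\|$ for all $c\in\Gamma$ together with $\|\Delta(A)|_{H_0}\|=\|A\|$.

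First I would dispose of the fibre $H_0$. The assignment $e_l\otimes e_l\mapsto e_l$ extends to a unitary $U_0\colon H_0\to l^2(S)$, and by Lemma~\ref{ind} a monomial $V$ of index $d$ satisfies $(V\otimes V)(e_l\otimes e_l)=(Ve_l)\otimes(Ve_l)$, equal to $e_{l+d}\otimes e_{l+d}$ if $Ve_l=e_{l+d}$ and to $0$ if $Ve_l=0$; hence $U_0\Delta(V)|_{H_0}U_0^{*}=V$, and by linearity $U_0\Delta(A)|_{H_0}U_0^{*}=A$ for every $A\in P(S)$. Thus $(H_0,\Delta(\cdot)|_{H_0})$ is unitarily equivalent to the regular representation; in particular $\|\Delta(A)|_{H_0}\|=\|A\|$, which already gives $\|\Delta(A)\|\ge\|A\|$ and shows that any contractive extension of $\Delta$ restricts on $H_0$ to a copy of the identity representation of $C^*_{red}(S)$, hence is isometric.

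For a general fibre, set $S_c=\{l\in S:\ l+c\in S\}$, so that $\{e_l\otimes e_{l+c}:\ l\in S_c\}$ is an orthonormal basis of $H_c$; moreover $S_c$ is upward closed in $S$ and $S+S_c\subseteq S_c$, so $l^2(S_c)\subseteq l^2(S)$ is invariant under every $T_a$, $a\in S$. Under the unitary $U_c\colon H_c\to l^2(S_c)$, $e_l\otimes e_{l+c}\mapsto e_l$, Lemma~\ref{ind} turns $\Delta(T_a)|_{H_c}$ into the shift $e_l\mapsto e_{l+a}$ on $l^2(S_c)$ — that is, $T_a|_{l^2(S_c)}$ — and $\Delta(T_a)^{*}|_{H_c}$ into its adjoint $P_{S_c}T_a^{*}|_{l^2(S_c)}$. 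Hence $(H_c,\Delta(\cdot)|_{H_c})$ is unitarily equivalent to the isometric representation $a\mapsto T_a|_{l^2(S_c)}$ of $S$, i.e.\ to the compression of the regular representation to the $S$-invariant subspace $l^2(S_c)$, and the only thing left is to show this compressed representation is weakly contained in the regular one.

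When the natural order on $S$ is total (equivalently $\Gamma=S\cup(-S)$) this is quick: the compressed representation above is isometric and non-unitary (as is the regular representation itself, unless $S$ is a group), so by Theorem~\ref{usl} it generates a $C^*$-algebra canonically isomorphic to $C^*_{red}(S)$ and $\|\Delta(A)|_{H_c}\|=\|A\|$. Equivalently, the isometry $W\colon e_s\otimes e_t\mapsto e_s\otimes e_{s+t}$ of $l^2(S)\otimes l^2(S)$ satisfies $\Delta(A)W=W(A\otimes 1)$ — checked on the generators $T_a,T_a^{*}$ and extended by multiplicativity — and has range $\bigoplus_{c\in S}H_c$, whence $\|\Delta(A)|_{\bigoplus_{c\in S}H_c}\|=\|A\otimes 1\|=\|A\|$, while $W'\colon e_s\otimes e_t\mapsto e_{s+t}\otimes e_t$ gives the same bound on $\bigoplus_{c\in -S}H_c$, and the two families of fibres exhaust $\Gamma$. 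For an arbitrary abelian cancellative $S$ this last point is the real obstacle: one must show that the compression of the regular representation to the $S$-invariant subspace $l^2(S_c)$ is still weakly contained in $\lambda$. I would attack it by decomposing $S_c$ into translates $v+S$ of $S$, noting that on each such translate the compressed $S$-representation dilates back to the regular representation, and recombining these to recover $\|\Delta(A)|_{H_c}\|\le\|A\|$ from the totally ordered case just treated. Granting this, $\Delta$ extends to a $\ast$-homomorphism $C^*_{red}(S)\to C^*_{red}(S)\otimes C^*_{red}(S)$ which, by the $H_0$ computation, is isometric, hence the desired embedding.
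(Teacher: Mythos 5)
Your overall strategy is the paper's: decompose $l^2(S)\otimes l^2(S)=\bigoplus_{c\in\Gamma}H_c$ into the $\Delta(P(S))$-invariant fibres, get the lower bound $\|\Delta(A)\|\geq\|A\|$ from the fibre $H_0$ (where $\Delta$ is unitarily equivalent to the identity representation, which also settles well-definedness of $\Delta$ on $P(S)$), and reduce the upper bound to the fibrewise estimate $\|\Delta(A)|_{H_c}\|\leq\|A\|$. Your treatment of the individual fibres is in fact more careful than the paper's: you correctly identify $\Delta(\cdot)|_{H_c}$ with the representation generated by the compressed isometries $T_a|_{l^2(S_c)}$ (words evaluated letter by letter in the compressed generators), not with the compression $A\mapsto P_{S_c}AP_{S_c}$ of whole elements; these differ for monomials such as $T_2^*T_3T_3^*T_2$ in $S=\{0,2,3,\dots\}$, which is exactly the point the paper's diagram (\ref{qdiag}) glosses over. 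Your intertwining isometries $W$ and $W'$ are correct and give $\|\Delta(A)|_{H_c}\|=\|A\|$ for every $c\in S\cup(-S)$, for an arbitrary abelian $S$.

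The gap is the one you flag yourself: the fibres $H_c$ with $c\in\Gamma\setminus(S\cup(-S))$, which are nonzero precisely when the natural order is not total --- e.g. $c=(1,-1)$ for $S=\mathbb{Z}_+^2$, or $c=-1$ for $S=\{0,2,3,\dots\}$ --- and the lemma is asserted, and later used, for such $S$. Your proposed repair, ``decompose $S_c$ into translates $v+S$ and recombine,'' does not go through as stated: the translates overlap (for $S=\{0,2,3,\dots\}$ one has $S_{-1}=\{3,4,5,\dots\}=(3+S)\cup(4+S)$, not a disjoint union), so $l^2(S_c)$ is not an orthogonal sum of the corresponding invariant subspaces and no sup-of-norms argument is available; worse, $\pi_c(T_a)^*$ is the adjoint taken relative to $l^2(S_c)$, not relative to any single translate, so $\pi_c$ does not restrict to a copy of the regular representation on any $l^2(v+S)$. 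What is really needed is that the isometric representation $a\mapsto T_a|_{l^2(S_c)}$ is weakly contained in the regular one, and Theorem \ref{usl} is a warning that this cannot follow from isometry alone once the order fails to be total. A workable route is to check that $V\mapsto\pi_c(V)$ is a well-defined ${}^*$-representation of the inverse semigroup $S_{red}^*$ (since $\pi_c(V)e_l\neq0$ if and only if $Ve_l\neq0$ and $Ve_{l+c}\neq0$) and then invoke the coincidence of the full and reduced $C^\ast$-algebras of $S_{red}^*$, which holds because $\Gamma$ is abelian, hence amenable; without some such input your argument is complete only for totally ordered $S$.
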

 \begin{proof}
 
 Fixing $a\in S, Q\in P(S)$, we show the commutativity of the following diagram:

   \begin{equation}\label{qdiag}\begin{CD}
   H_a @>\rho_a>> l^2(S) \\
   @V\Delta_a(Q)=\Delta(Q)|_{H_a}VV @VVP_aQV \\
   H_a @>\rho_a>> l^2(S)
   \end{CD}
   \end{equation}
 where $\rho_a$ is a projection onto the first component:
 $$\rho_a(\sum\limits_{i=1}^{n}\lambda_i e_{a_i}\otimes e_{b_i})=\sum\limits_{i=1}^{n} \lambda_i e_{a_i}, $$
 $P_a\colon l^2(S)\to\rho_a(H_a)\subset l^2(S)$ is an orthogonal projection. Obviously, $\rho_a$ is an isometric embedding of $H_a$ into $l^2(S)$. It is sufficient to check \ref{qdiag} for basic elements $e_c\otimes e_d$ in $H_a$. Suppose $Q=\sum\limits_{i=1}^{n}\lambda_i V_i $. Then
 $$ \Delta(Q)(e_c\otimes e_d)=\sum\limits_{i=1}^{n}\lambda_i' e_{c+a_i}\otimes e_{d+a_i}, $$
 where $a_i=\mathrm{ind} V_i $ and
 $$ \lambda_i'=\left\{ \begin{array}{c}
 \lambda_i, \mbox{ if } c+a_i,d+a_i\in S\\ 0,\mbox{ if either } c+a_i\notin S,\mbox{ or } d+a_i\notin S.
 \end{array}\right. $$
 Consequently, 
 $$\rho_a(\Delta)(Q)(e_c\otimes e_d)=\sum\limits_{i=1}^{n}\lambda_i' e_{c+a_i}.  $$
 On the other hand, $$ ((P_aQ)\rho_a)(e_c\otimes e_d)=P_a(Q(e_c))=P_a(\sum\limits_{i=1}^{n}\lambda_i e_{c+a_i})=\sum\limits_{i=1}^{n}\lambda_i'e_{c+a_i}. $$
 And commutativity of diagram (\ref{qdiag}) is proved. Further, $\|\rho_a\Delta_a(Q)\| =\|P_aQ\rho_a\| $.
 Due to the fact that  $\rho_a$ is embedding and $P_a$ is projection, we have $\|\Delta_a(Q)\|\leq\|Q\|$ for all $a\in\Gamma$. 
 
 Note that the space $H_0$ is generated by $e_a\otimes e_a$ for all $a\in S$. Operator $\rho_0\colon H_0\to l^2(S)$ is a bijection and $P_0$ is an identity. Therefore, $\|\Delta_0(Q)\|=\|Q\|$ for all $Q\in P(S)$. Thus, operator $\Delta\colon P(S)\to P(S)\otimes P(S)$ extends to a continuous embedding $\Delta\colon C^*_{red}(S)\to C^*_{red}(S)\otimes C^*_{red}(S)$.
 
 \end{proof}

 \begin{corollary}\label{aea}
  Suppose $A\in C^*_{red}(S)$ and $Ae_a=\sum\limits_{i=1}^{\infty}\lambda_i e_{a_i}$. Then 
  $$ \Delta(A)(e_a\otimes e_a)=\sum\limits_{i=1}^{\infty} \lambda_i (e_{a_i}\otimes e_{a_i}). $$
 \end{corollary}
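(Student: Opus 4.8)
The plan is to reduce the statement for a general $A\in C^*_{red}(S)$ to the case of monomials, for which the identity is essentially the definition of $\Delta$ together with Lemma \ref{l441}, and then pass to the limit using the continuity of $\Delta$ established in Lemma \ref{cred}. First I would recall that finite linear combinations of monomials, i.e. the algebra $P(S)$, are dense in $C^*_{red}(S)$, so there is a sequence $A^{(k)}=\sum_{i}\lambda^{(k)}_i V^{(k)}_i$ in $P(S)$ converging to $A$ in operator norm. For each monomial $V$ with $\mathrm{ind}V=c$, Lemma \ref{l441} gives $Ve_a=e_{a+c}$ or $Ve_a=0$, and by the definition of $\Delta$ on $P(S)$ we have $\Delta(V)=V\otimes V$, hence $\Delta(V)(e_a\otimes e_a)=Ve_a\otimes Ve_a$ equals $e_{a+c}\otimes e_{a+c}$ or $0$ accordingly; in either case $\Delta(V)(e_a\otimes e_a)=\sum_i\mu_i(e_{a_i}\otimes e_{a_i})$ whenever $Ve_a=\sum_i\mu_i e_{a_i}$ (a one-term sum). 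By linearity this extends to every $Q\in P(S)$: if $Qe_a=\sum_i\lambda_i e_{a_i}$ then $\Delta(Q)(e_a\otimes e_a)=\sum_i\lambda_i(e_{a_i}\otimes e_{a_i})$.

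The second step is the limiting argument. Since $A^{(k)}\to A$ in norm, $A^{(k)}e_a\to Ae_a$ in $l^2(S)$, so the $a$-th coordinate of $A^{(k)}e_a$ with respect to the basis $\{e_b\}$, namely $(A^{(k)}e_a,e_b)$, converges to $(Ae_a,e_b)=\lambda_b$ for every $b\in S$ (with $\lambda_b=0$ unless $b=a_i$ for some $i$). On the other hand, by Lemma \ref{cred}, $\Delta(A^{(k)})\to\Delta(A)$ in $C^*_{red}(S)\otimes C^*_{red}(S)$, so $\Delta(A^{(k)})(e_a\otimes e_a)\to\Delta(A)(e_a\otimes e_a)$ in $l^2(S)\otimes l^2(S)$. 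Taking the inner product with the basis vector $e_b\otimes e_b$ and using the first step, $(\Delta(A^{(k)})(e_a\otimes e_a),e_b\otimes e_b)=(A^{(k)}e_a,e_b)$, which converges to $\lambda_b$. Hence $(\Delta(A)(e_a\otimes e_a),e_b\otimes e_b)=\lambda_b$ for every $b\in S$.

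Finally I would argue that $\Delta(A)(e_a\otimes e_a)$ has no component outside the set $\{e_b\otimes e_b:b\in S\}$. This follows because each $\Delta(A^{(k)})(e_a\otimes e_a)$ lies in $H_0$ (the closed span of $\{e_b\otimes e_b:b\in S\}$, which by the discussion preceding Lemma \ref{cred} is invariant under $\Delta(P(S))$), and $H_0$ is closed, so the limit $\Delta(A)(e_a\otimes e_a)$ lies in $H_0$ as well. Therefore $\Delta(A)(e_a\otimes e_a)=\sum_{b\in S}\big(\Delta(A)(e_a\otimes e_a),e_b\otimes e_b\big)\,e_b\otimes e_b=\sum_i\lambda_i(e_{a_i}\otimes e_{a_i})$, which is the claim. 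The only point requiring a little care — the main obstacle — is the bookkeeping of the infinite sum $Ae_a=\sum_{i=1}^\infty\lambda_i e_{a_i}$: one must make sure the interchange of the limit in $k$ with the (now infinite) expansion in the basis is legitimate, which is exactly what the $l^2$-convergence of $\Delta(A^{(k)})(e_a\otimes e_a)$ in the closed subspace $H_0$ provides, with the isometry $\rho_0\colon H_0\to l^2(S)$ of Lemma \ref{cred} identifying the two expansions coefficientwise.
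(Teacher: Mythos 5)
Your proposal is correct and follows essentially the same route as the paper: verify the identity on $P(S)$ via the explicit action of monomials (Lemma \ref{l441} and the definition of $\Delta$), then pass to the limit by density. You simply spell out the limiting step (convergence in the closed subspace $H_0$ and coefficientwise identification) more carefully than the paper, which just invokes density.
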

 \begin{proof}
  By virtue of definition of $\Delta$ on the subalgebra $P(S)$ of algebra $C^*_{red}(S)$, we have
  $$ \Delta(\sum\limits_{i=1}^{n}\lambda_i V_i)=\sum\limits_{i=1}^{n}\lambda_i V_i\otimes V_i, \ \sum\limits_{i=1}^{n}\lambda_i V_i\in P(S). $$
  Further, $$\sum\limits_{i=1}^{n}\lambda_i V_i e_a=\sum\limits_{i=1}^{n}\lambda_i' e_{a_i},$$
  where $\lambda_i'=\lambda_i$ in case $a+\mathrm{ind}V_i\in S$, and $\lambda_i'=0$ otherwise. This implies that $$ (\sum\limits_{i=1}^{n}\lambda_i V_i\otimes V_i)(e_a\otimes e_a)=\sum\limits_{i=1}^{n}\lambda_i' e_{a_i}\otimes e_{a_i}. $$
  Since $P(S)$ is dense in $C^*_{red}(S)$, we get the required statement. \end{proof}

 \begin{theorem}
 \label{posle} The pair $(C^*_{red}(S),\Delta)$ is a compact quantum semigroup.
 \end{theorem}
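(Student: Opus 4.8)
The plan is to check the three conditions in the paper's definition of a compact quantum semigroup for the pair $(C^*_{red}(S),\Delta)$: that $\Delta$ is an embedding, that it is a $*$-homomorphism, and that the induced multiplication $\times$ on the dual Banach space $C^*_{red}(S)^*$ is associative. Here $\pi_1=\pi_2$ is the regular representation, so by Lemma~\ref{s2} the tensor product $C^*_{red}(S)\otimes C^*_{red}(S)$ is the spatial one. The injectivity and boundedness of $\Delta$ are exactly the content of Lemma~\ref{cred}, so what remains is two short arguments, each of which reduces to the dense $*$-subalgebra $P(S)$ of monomials and then invokes continuity.

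First I would make the $*$-homomorphism property explicit. On $P(S)$ it is immediate from $\Delta(V)=V\otimes V$ for monomials: since $VW\in S_{red}^*$ and $V^*\in S_{red}^*$, one has
$$\Delta(V)\Delta(W)=(V\otimes V)(W\otimes W)=VW\otimes VW=\Delta(VW),\qquad \Delta(V)^*=V^*\otimes V^*=\Delta(V^*),$$
and by linearity these identities hold on all of $P(S)$. As $\Delta$ is bounded (Lemma~\ref{cred}) and $P(S)$ is dense in $C^*_{red}(S)$, $\Delta$ extends to a $*$-homomorphism of $C^*_{red}(S)$; being injective, it is automatically isometric.

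Next I would establish associativity of $(C^*_{red}(S)^*,\times)$, which plays the role of coassociativity in this framework. Fix $\xi,\eta,\zeta\in C^*_{red}(S)^*$; by the estimate $\|\mu\times\nu\|\le\|\mu\|\,\|\nu\|$ noted before the definition, both $(\xi\times\eta)\times\zeta$ and $\xi\times(\eta\times\zeta)$ are bounded linear functionals on $C^*_{red}(S)$. Evaluating on a monomial $V$, using $\Delta(V)=V\otimes V$ and the defining relation $(\mu\otimes\nu)(A\otimes B)=\mu(A)\nu(B)$, gives
$$\bigl((\xi\times\eta)\times\zeta\bigr)(V)=(\xi\times\eta)(V)\,\zeta(V)=\xi(V)\eta(V)\zeta(V)=\xi(V)\,(\eta\times\zeta)(V)=\bigl(\xi\times(\eta\times\zeta)\bigr)(V).$$
Hence the two functionals agree on all monomials, so by linearity on $P(S)$, and since both are bounded and $P(S)$ is dense, they agree on $C^*_{red}(S)$. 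Thus $\times$ is associative, $\Delta$ is a $\pi_1\otimes\pi_2$-comultiplication, and $(C^*_{red}(S),\Delta)$ is a compact quantum semigroup.

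The genuine difficulty sits entirely inside Lemma~\ref{cred} — showing that the naive formula $\sum\lambda_iV_i\mapsto\sum\lambda_iV_i\otimes V_i$ is bounded and injective, via the $\Delta(P(S))$-invariant decomposition $l^2(S)\otimes l^2(S)=\bigoplus_{c\in\Gamma}H_c$ and the fact that $\rho_0\colon H_0\to l^2(S)$ is a bijection. Given that, the only point here demanding care is to avoid the false identity $\Delta(A)=A\otimes A$ for a general $A\in C^*_{red}(S)$: all computations must be carried out on monomials (where it does hold) and then extended by density and the continuity of $\Delta$ and of the product $\times$.
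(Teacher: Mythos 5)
Your proposal is correct and follows the same route as the paper: it invokes Lemma~\ref{cred} for the embedding and verifies associativity of $\times$ on the dual by the computation $(\xi\times(\eta\times\zeta))(V)=\xi(V)\eta(V)\zeta(V)=((\xi\times\eta)\times\zeta)(V)$ on monomials. You merely spell out two points the paper leaves implicit — that $\Delta$ is a $*$-homomorphism on $P(S)$ and that the density/continuity argument extends the monomial identity to all of $C^*_{red}(S)$ — which is a welcome clarification but not a different proof.
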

 \begin{proof}
 Let us show that multiplication induced by comultiplication $\Delta$ by Lemma \ref{cred} on a Banach algebra $(C^*_{red}(S))^*$ is accosiative. Indeed, for any monomial $V$, we have the following
 $$ (\xi\times(\eta\times\zeta))(V)=(\xi\otimes(\eta\times\zeta))\Delta(V)=$$
 $$=\xi(V)\cdot(\eta\times \zeta)(V)=\xi(V)\cdot\eta(V)\cdot \zeta(V)=((\xi\times\eta)\times\zeta)(V). $$
  \end{proof}
 
 \subsection{Weak Hopf algebra }
  In 1998 F.Li \cite{Li} defined a notion, naturally generalizing a Hopf algebra. This generalization is based on weakening an antipode condition, and called \emph{a weak Hopf algebra}. Let $A$ be an algebra with a comuliplication $\Delta$. Denote by $m$ a three-argument multiplication map in algebra $A$, $m(a\otimes b\otimes c)=abc$, the same as a product of two elements.
    
   Suppose there exists a linear map $T\colon A\to A$ such that
   \begin{equation}
   \label{wha1} m(\mathrm{id}\otimes T\otimes \mathrm{id})(\Delta\otimes\mathrm{id})\Delta=\mathrm{id}
   \end{equation}
   \begin{equation}
    \label{wha2} m(T\otimes\mathrm{id}\otimes T)(\Delta\otimes\mathrm{id})\Delta=T
    \end{equation}
    Then $A$ is called \emph{a weak Hopf algebra}, and $T$ is \emph{a weak antipode}. Note that weak Hopf algebra is not bialgebra, since a counit is not required. Obviously, $T(1)=1$. Every Hopf algebra admits a unique antipode. However, this is not true for a weak Hopf algebra. Other weak Hopf algebra properties are shown in \cite{Li1}.
    
    The analogous connection between a Hopf algebra and a group takes place here as well. Weak Hopf algebras are closely related to inverse semigroups.
    
   Consider an inverse semigroup $M$ and semigroup algebra $\CC[M]$. Algebra $\CC[M]$ admits comultiplication and weak antipode:
   $$\Delta(x)=x\otimes x,\ \Delta(\sum\limits_{i}\lambda_i x_i)= \sum\limits_{i}\lambda_i x_i\otimes x_i,$$
   $$T(x)=x^*.$$
   Clearly, equations (\ref{wha1}, \ref{wha2}) hold and  $\CC[M]$ is a weak Hopf algebra, called \emph{an inverse semigroup bialgebra}.
   
   Let $A$ be a weak Hopf algebra with a weak antipode $T$. Then a set of group-like elements in $A$ forms a regular semigroup, where inverse of any $x$ is $T(x)$. Thus, every inverse semigroup is associated  with a classical weak Hopf algebra, and every weak Hopf algebra contains an inverse semigroup. 
   
   Another example is algebra $M_n(\CC)$, generated by matrix units $E_{i,j}$. Comultiplication and weak antipode are defined on generators:
   $$\Delta(E_{i,j})=E_{i,j}\otimes E_{i,j},$$
   
   $$T(E_{i,j})=E_{j,i}.$$  
   Matrix units form an inverse semigroup. Therefore, this weak Hopf algebra turns out to be an inverse semigroup bialgebra.
   
   A.M. Vershik in \cite{Vershik1} proves result, which shows the connection between inverse semigroup bialgebras and finite-dimensional bialgebras:
   \begin{theorem}[\cite{Vershik1}]
   Semigroup bialgebra of a finite inverse semigroup $S$ with a unit, convolution multiplication and pointwise comultiplication is a semi-simple cocommutative involutive bialgebra. Conversely, every finite semi-simple cocommutative involutive bialgebra is isomorphic (as an involutive bialgebra) to a semigroup algebra of some finite inverse semigroup with unit.
   \end{theorem}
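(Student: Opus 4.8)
The plan is to prove the two implications separately; the forward one is essentially formal, and the whole difficulty sits inside one structural step of the converse.

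\emph{Forward direction.} Let $S$ be a finite inverse monoid and equip $\CC[S]$ with the convolution product, the counit $\varepsilon(x)=1$ for $x\in S$ (extended linearly), the conjugate-linear involution extending the semigroup inverse $x\mapsto x^\ast$, and the comultiplication $\Delta(x)=x\otimes x$. Coassociativity, counitality and cocommutativity of $\Delta$ are immediate from $\Delta(x)=x\otimes x$, and $\Delta$ is $\ast$-preserving since $\Delta(x^\ast)=x^\ast\otimes x^\ast=(\ast\otimes\ast)\Delta(x)$; as recalled before the theorem, $(\CC[S],\Delta)$ with $T(x)=x^\ast$ is even a weak Hopf algebra. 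The only nontrivial point is semisimplicity: because $S$ is a finite inverse semigroup, $\CC[S]$ is isomorphic to a finite direct sum of matrix algebras over the group algebras of the maximal subgroups of $S$ (Munn's structure theorem), and each such group algebra is semisimple by Maschke's theorem; hence $\CC[S]$ is semisimple. Thus $\CC[S]$ is a finite semisimple cocommutative involutive bialgebra.

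\emph{Converse: setup.} Now let $A$ be a finite semisimple cocommutative involutive bialgebra, with comultiplication $\Delta$, counit $\varepsilon$ and involution $\ast$. By Wedderburn--Artin, $A\cong\bigoplus_k M_{n_k}(\CC)$; the involution, being a conjugate-linear algebra anti-automorphism, permutes the minimal two-sided ideals, and $\Delta$, being a unital algebra homomorphism, sends the minimal central idempotent $z_k$ of the $k$-th block to a sum $\Delta(z_k)=\sum_{(i,j)\in U_k}z_i\otimes z_j$ of minimal central idempotents of $A\otimes A$, with the sets $U_k$ partitioning the set of pairs. Cocommutativity forces each $U_k$ symmetric, coassociativity makes the induced operation on indices associative, and the counit axiom $(\varepsilon\otimes\mathrm{id})\Delta=\mathrm{id}=(\mathrm{id}\otimes\varepsilon)\Delta$ singles out a one-dimensional block $z_0A\cong\CC$ on which $\varepsilon$ is the evaluation, with $(0,k),(k,0)\in U_k$ and the corresponding projections of $\Delta$ equal to the canonical identifications $M_{n_k}(\CC)\cong\CC\otimes M_{n_k}(\CC)\cong M_{n_k}(\CC)\otimes\CC$. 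Dualizing, the finite-dimensional algebra $A^\ast$ with convolution product $(\varphi\psi)(a)=(\varphi\otimes\psi)\Delta(a)$ is commutative (by cocommutativity) and unital with unit $\varepsilon$, and carries the conjugate-linear involution $\varphi^\ast(a)=\overline{\varphi(a^\ast)}$.

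\emph{Converse: the crux.} The key step --- and the one I expect to be the main obstacle --- is to show that the commutative algebra $A^\ast$ is semisimple, equivalently that $A$ is cosemisimple as a coalgebra, equivalently that $A$ is linearly spanned by its set $G=G(A)$ of group-like elements. That cocommutativity alone is not enough is already visible from examples (the complex algebra of a noninverse finite monoid is cocommutative with grouplike comultiplication but is typically not semisimple), so both hypotheses must be used together: one analyses the unital algebra homomorphisms $M_{n_k}(\CC)\to M_{n_in_j}(\CC)$ obtained by projecting $\Delta$ onto the $(i,j)$-summand, constrains their multiplicities through coassociativity, and uses the involution to force the comultiplication on each block $M_{n_k}(\CC)$ to be conjugate to the grouplike one $\Delta(E_{pq})=E_{pq}\otimes E_{pq}$ on a suitable system of matrix units, so that these matrix units are group-like and span the block. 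This is the place where I would follow Vershik's argument in \cite{Vershik1} most closely. Once it is established, $G$ is finite (group-like elements being linearly independent), it is stable under $\ast$, and it contains $1$.

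\emph{Converse: conclusion.} Since $\Delta(gh)=\Delta(g)\Delta(h)=gh\otimes gh$, the product in $A$ of two group-like elements is again group-like or is $0$, so $S:=G\cup\{0\}$ is a finite monoid with zero and $1\in S$. By the facts recalled before the theorem, the group-like elements of a weak Hopf algebra form a regular semigroup with inverse $g\mapsto T(g)$; here $\ast$ plays the role of the weak antipode (the weak-antipode identities, restricted to group-like elements, read $gg^\ast g=g$, $g^\ast gg^\ast=g^\ast$), and semisimplicity of $A=\operatorname{span}G$ forces inverses to be unique --- equivalently, the group-like idempotents to commute --- so $S$ is an inverse monoid and $g^\ast$ is the semigroup inverse of $g$. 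Finally, since $A=\operatorname{span}G$ and the product, comultiplication, counit and involution of $A$ restrict on $G\cup\{0\}$ to the convolution product, grouplike comultiplication, augmentation and inverse of (the contracted, when $S$ has a zero) semigroup algebra of $S$, the identity map on $G$ extends to an isomorphism $A\cong\CC[S]$ of involutive bialgebras, completing the proof.
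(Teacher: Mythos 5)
This theorem is quoted from \cite{Vershik1}; the paper supplies no proof of its own, so there is nothing internal to compare your argument against --- it has to stand on its own. Your forward direction is fine (Munn's structure theorem plus Maschke is the standard route to semisimplicity of $\CC[S]$). The converse, however, contains a genuine gap exactly where you flag it: the claim that a finite semisimple cocommutative involutive bialgebra is \emph{spanned by its group-like elements} is not a technical lemma one can wave at --- it is essentially the whole content of Vershik's theorem. Cocommutativity over $\CC$ only tells you that the simple subcoalgebras are one-dimensional and spanned by group-likes; it does not give cosemisimplicity, and your own counterexample (non-inverse finite monoids) shows the hypotheses must interact nontrivially. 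The paragraph in which you "analyse the unital homomorphisms $M_{n_k}(\CC)\to M_{n_in_j}(\CC)$, constrain multiplicities through coassociativity, and use the involution to force the grouplike form on a system of matrix units" is a plan, not an argument: you never exhibit the system of matrix units, never explain how the involution is used, and explicitly defer to \cite{Vershik1}. As written, the converse is not proved.

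Two further points. First, the step ``semisimplicity of $A=\operatorname{span}G$ forces inverses to be unique, so $S$ is inverse'' is asserted without justification. What the weak-antipode identities give you a priori is that $G$ is a regular ${}^\ast$-monoid ($gg^\ast g=g$, $(gh)^\ast=h^\ast g^\ast$), and regular ${}^\ast$-semigroups need not be inverse (rectangular bands with $e_{ij}^\ast=e_{ji}$ are the standard example); you must actually show that such configurations produce a nonzero radical in $\CC[G]$, which is true but requires an argument. Second, the detour through $S=G\cup\{0\}$ and contracted semigroup algebras is unnecessary and slightly distorts the statement: since every group-like satisfies $\varepsilon(g)=1$, one has $\varepsilon(gh)=1\neq 0$, so a product of group-likes is never $0$ and $G$ itself is already a monoid with $A=\CC[G]$, matching the theorem as stated (semigroup algebra of an inverse semigroup \emph{with unit}, not with zero).
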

     
 Recall that algebra $C^*_{red}(S)$ can be regarded as a $C^\ast$-algebra generated by isometric operators $T_a$, $T_b^*$ for all $a,b\in S$. Since the regular representation is inverse, semigroup $\Sigma$ generated by such operators is also inverse (see \cite{AT} for details). The inverse for $T_a$ in this semigroup is $T_a^*$. Thus, we obtain the following.
  
  \begin{theorem}
  \label{hop} Compact quantum semigroup $(C^*_{red}(S),\Delta)$ contains a dense weak Hopf algebra $(\CC[\Sigma],\Delta, ^*)$.
  \end{theorem}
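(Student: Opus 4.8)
The plan is to verify directly that the triple $(\CC[\Sigma],\Delta,{}^*)$ satisfies all the axioms of a weak Hopf algebra as stated in equations (\ref{wha1}) and (\ref{wha2}), and then to argue density. First I would recall what $\CC[\Sigma]$ is: the algebra of finite linear combinations of monomials from the inverse semigroup $\Sigma = S_{red}^*$ generated by the operators $T_a, T_b^*$ inside $C^*_{red}(S)$. By the Remark following Lemma \ref{ind} and the discussion preceding Theorem \ref{hop}, $\Sigma$ is indeed an inverse semigroup under the involution ${}^*$, with $(T_a)^* = T_a^*$. The comultiplication $\Delta$ restricted to $P(S) = \CC[\Sigma]$ is exactly the diagonal map $\Delta(\sum_i\lambda_i V_i) = \sum_i\lambda_i V_i\otimes V_i$ on monomials, introduced in Section \ref{copr}.

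The core of the argument is then the observation, already made in the excerpt for general inverse semigroup bialgebras, that for an inverse semigroup $M$ the semigroup algebra $\CC[M]$ with diagonal comultiplication and the map $T(x)=x^*$ is a weak Hopf algebra. I would carry this out concretely for $M = \Sigma$: on a generator (monomial) $V$ one computes
$$m(\mathrm{id}\otimes T\otimes\mathrm{id})(\Delta\otimes\mathrm{id})\Delta(V) = m(\mathrm{id}\otimes T\otimes\mathrm{id})(V\otimes V\otimes V) = V V^* V = V,$$
using the inverse-semigroup identity $VV^*V = V$, and similarly
$$m(T\otimes\mathrm{id}\otimes T)(\Delta\otimes\mathrm{id})\Delta(V) = V^* V V^* = V^* = T(V),$$
using $V^*VV^* = V^*$. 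Extending by linearity over all finite sums gives (\ref{wha1}) and (\ref{wha2}) on all of $\CC[\Sigma]$. I would also note $T(1) = 1$ since the unit of $C^*_{red}(S)$ (namely $T_0 = I$) is a self-adjoint idempotent in $\Sigma$. This establishes that $(\CC[\Sigma],\Delta,{}^*)$ is a weak Hopf algebra; it is an inverse semigroup bialgebra in the terminology of the preceding paragraphs.

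Finally, for density: $\CC[\Sigma] = P(S)$ is by definition (Section \ref{algebra}) the algebra of finite linear combinations of monomials, and the $C^*$-algebra $C^*_{red}(S)$ is the norm-closure of precisely this set of operators, so $\CC[\Sigma]$ is dense in $C^*_{red}(S)$. One should also remark that $\Delta$ as defined on $\CC[\Sigma]$ is the restriction of the comultiplication of the compact quantum semigroup $(C^*_{red}(S),\Delta)$ obtained in Lemma \ref{cred} and Theorem \ref{posle}, so the weak Hopf algebra structure is genuinely compatible with the ambient quantum semigroup structure. I do not expect a serious obstacle here; the only point requiring a little care is to confirm that $\Delta$ really maps $\CC[\Sigma]$ into $\CC[\Sigma]\odot\CC[\Sigma]$ (it does, since $V\mapsto V\otimes V$ and $V$ a monomial forces $V\otimes V$ to lie in $P(S)\odot P(S) = P(S^2)$ by the remark after Lemma \ref{s2}), and that the three-fold multiplication $m$ used in the axioms is the genuine associative product of the $C^*$-algebra restricted to the dense subalgebra, which it is.
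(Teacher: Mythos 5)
Your proof is correct and follows essentially the same route as the paper, which simply observes that $\Sigma$ is an inverse semigroup (because the regular representation is inverse) and invokes the inverse semigroup bialgebra construction described just before the theorem; you merely spell out the verification of (\ref{wha1}) and (\ref{wha2}) via $VV^*V=V$ and $V^*VV^*=V^*$ and the density of $P(S)=\CC[\Sigma]$ that the paper leaves implicit. The one convention worth flagging when you "extend by linearity" is that the weak antipode must be the \emph{linear} extension of $V\mapsto V^*$ to $\CC[\Sigma]$ (not the conjugate-linear $C^*$-involution), since otherwise $\mathrm{id}\otimes T\otimes \mathrm{id}$ is not well defined on the tensor product; on monomials the two agree, so this costs nothing.
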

 This result motivates to search for a certain category of quantum semigroups which admits an analogue of the well-known fact that every compact quantum group contains a dense ${}^\ast$-Hopf algebra \cite{Woronowicz}.
 
 \subsection{Compact quantum subgroup}

 In this section we extend a notion of compact quantum subgroup from quantum groups \cite{Soltan} to quantum semigroups. We start with classical (commutative) case, the algebra of functions on a semigroup.
  
  Let $P'\subset P$ be a compact subsemigroup in a compact semigroup $P$. Then there exists a natural ${}^\ast$-epimorphism $\pi\colon C(P)\to C(P')$, which is a restriction of functions from semigroup $P$ to $P'$. Denote corresponding comultiplications on this algebras by $\Delta$ and $\Delta'$. We get the following relation:
  \begin{equation}
  \label{sub}\Delta'\pi=(\pi\otimes\pi)\Delta. 
  \end{equation}
  
  Now take compact quantum semigroups $(\mathcal{A},\Delta)$, $(\mathcal{A}',\Delta')$. Following the classical case,  we call $(\mathcal{A}',\Delta')$ \emph{a compact quantum subsemigroup} in $(\mathcal{A},\Delta)$, if there exists a ${}^\ast$-epimorphism $\pi\colon \mathcal{A}\to \mathcal{A}'$, which verifies (\ref{sub}). Actually, coassociativity of $\Delta'$ follows from   relation (\ref{sub}) and coassociativity of $\Delta$. It is sufficient to require that $\Delta'$ is a unital ${}^\ast$-homomorphism. If moreover $(\mathcal{A},\Delta)$ is a compact quantum group, then $(\mathcal{A}',\Delta')$ is a compact quantum subgroup.
  
  Closed ideal $J\subset \mathcal{A}$ is called \emph{a coideal} in a compact quantum semigroup $(\mathcal{A},\Delta)$ if 
  $$\Delta(J)\subseteq J\otimes \mathcal{A}+\mathcal{A}\otimes J.$$
  \begin{theorem}
  \label{coid} Let $J$ be a coideal in $(\mathcal{A},\Delta)$. There exists comultiplication $\Delta_J$, turning $\mathcal{A}/J$ into a compact quantum subsemigroup in $(\mathcal{A},\Delta)$ by means of quotient map $\pi\colon \mathcal{A}\to \mathcal{A}/J$. Any compact quantum subsemigroup has this form.
  \end{theorem}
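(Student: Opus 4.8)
The plan is to produce $\Delta_J$ as the factorization of $(\pi\otimes\pi)\circ\Delta$ through the quotient map $\pi\colon\mathcal A\to\mathcal A/J$, and then to read the converse off the same factorization. First I would observe that $J\otimes\mathcal A+\mathcal A\otimes J$ is a closed ideal of $\mathcal A\otimes\mathcal A$ lying in the kernel of the $*$-homomorphism $\pi\otimes\pi\colon\mathcal A\otimes\mathcal A\to(\mathcal A/J)\otimes(\mathcal A/J)$, since $\pi\otimes\pi$ annihilates $J$ in either tensor leg. Consequently the coideal hypothesis $\Delta(J)\subseteq J\otimes\mathcal A+\mathcal A\otimes J$ forces $(\pi\otimes\pi)\circ\Delta$ to vanish on $J$, and the universal property of the quotient $C^\ast$-algebra yields a unique $*$-homomorphism $\Delta_J\colon\mathcal A/J\to(\mathcal A/J)\otimes(\mathcal A/J)$ with $\Delta_J\circ\pi=(\pi\otimes\pi)\circ\Delta$; this is precisely relation (\ref{sub}).

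Next I would verify the defining properties of a comultiplication for $\Delta_J$. Unitality and $*$-preservation are immediate from those of $\Delta$ and $\pi\otimes\pi$ together with surjectivity of $\pi$; for instance $\Delta_J(1)=\Delta_J(\pi(1))=(\pi\otimes\pi)\Delta(1)=1\otimes 1$, and likewise for products and involution. Coassociativity is the diagram chase mentioned in the text: applying $\pi\otimes\pi\otimes\pi$ to $(\Delta\otimes\mathrm{id})\Delta=(\mathrm{id}\otimes\Delta)\Delta$ and using (\ref{sub}) three times gives $(\Delta_J\otimes\mathrm{id})\Delta_J\circ\pi=(\mathrm{id}\otimes\Delta_J)\Delta_J\circ\pi$, and surjectivity of $\pi$ cancels it. For the Banach-algebra requirement I would identify $(\mathcal A/J)^{*}$ isometrically with the annihilator $J^{\perp}=\{\xi\in\mathcal A^{*}\colon\xi|_{J}=0\}$ and note that, for $\xi,\eta\in J^{\perp}$, the product induced on $(\mathcal A/J)^{*}$ by $\Delta_J$ is carried to the restriction of the product induced on $\mathcal A^{*}$ by $\Delta$, while $J^{\perp}$ is closed under the latter exactly because $\Delta(J)\subseteq J\otimes\mathcal A+\mathcal A\otimes J$; associativity is thus inherited from $\mathcal A^{*}$. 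This makes $(\mathcal A/J,\Delta_J)$ a compact quantum subsemigroup of $(\mathcal A,\Delta)$ via $\pi$; and if $(\mathcal A,\Delta)$ is a compact quantum group, applying the surjection $\pi\otimes\pi$ to the density of $(\mathcal A\otimes I)\Delta(\mathcal A)$ and $(I\otimes\mathcal A)\Delta(\mathcal A)$ shows the corresponding spaces for $\Delta_J$ are dense, so $\mathcal A/J$ is a compact quantum subgroup.

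For the converse, let $(\mathcal A',\Delta')$ be a compact quantum subsemigroup of $(\mathcal A,\Delta)$ given by a $*$-epimorphism $\pi\colon\mathcal A\to\mathcal A'$ satisfying (\ref{sub}), put $J=\ker\pi$ (a closed two-sided ideal) and identify $\mathcal A'\cong\mathcal A/J$. For $j\in J$ relation (\ref{sub}) gives $(\pi\otimes\pi)\Delta(j)=\Delta'(\pi(j))=0$, hence $\Delta(j)\in\ker(\pi\otimes\pi)$; invoking the identity $\ker(\pi\otimes\pi)=J\otimes\mathcal A+\mathcal A\otimes J$ then shows $\Delta(J)\subseteq J\otimes\mathcal A+\mathcal A\otimes J$, i.e. $J$ is a coideal. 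Finally, the uniqueness in the factorization above identifies the comultiplication built from this coideal with $\Delta'$ under $\mathcal A/J\cong\mathcal A'$, so every compact quantum subsemigroup is of the asserted form.

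I expect the only genuinely nonformal ingredient to be the identity $\ker(\pi\otimes\pi)=J\otimes\mathcal A+\mathcal A\otimes J$ used in the converse --- equivalently, that $-\otimes\mathcal A$ preserves exactness of $0\to J\to\mathcal A\to\mathcal A/J\to 0$ --- which holds for the minimal tensor product when $\mathcal A$ is exact, in particular for the nuclear algebras $C^*_{red}(S)$ with $S$ totally ordered that are the cases of interest. A secondary point to spell out is that $\Delta_J$ is again an embedding (so that $\mathcal A/J$ literally satisfies the definition of a compact quantum semigroup): this amounts to $\Delta^{-1}(J\otimes\mathcal A+\mathcal A\otimes J)=J$, which in the concrete situation can be checked from the explicit action of $\Delta$ on the monomials of $P(S)$ recorded in Corollary \ref{aea}.
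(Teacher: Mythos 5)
Your argument is correct and is essentially the proof the paper has in mind: the paper itself only cites \cite{Pinzari} for the quantum-group case and remarks that the density conditions can be dropped, and what you have written out is precisely that factorization argument (pass $(\pi\otimes\pi)\Delta$ through the quotient, inherit the $*$-homomorphism properties and the associativity of the dual product via $(\mathcal A/J)^*\cong J^\perp$, and run the same factorization backwards for the converse). The two caveats you flag --- that $\ker(\pi\otimes\pi)=J\otimes\mathcal A+\mathcal A\otimes J$ for the minimal tensor product requires exactness of $\mathcal A$ (harmless for the nuclear algebras $C^*_{red}(S)$, but not addressed in the general statement), and that injectivity of $\Delta_J$ is part of this paper's definition of comultiplication and is not automatic from the construction --- are genuine gaps that the paper glosses over, so keeping them explicit is the right call.
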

  \begin{proof} One can find the proof for the quantum group case in \cite{Pinzari}. Observing that the density conditions are not necessary in semigroup case, one can repeat the same here.
  \end{proof}

  \emph{An action} of a compact quantum semigroup $(\mathcal{A},\Delta)$ on a unital $C^\ast$-algebra $B$ is a unital *-homomorphism $\delta\colon B\to B\otimes \mathcal{A}$ such that $$(\delta\otimes \mathrm{id})\delta=(\mathrm{id} \otimes\Delta)\delta. $$
  Action $\delta$ is called \emph{ergodic}, if the fixed point algebra
  $$B^\delta=\{b\in B| \delta(b)=b\otimes I\}$$
  coincides with $\mathbb{C}$.
  
  Obviously, any compact quantum subsemigroup  $(\mathcal{A}',\Delta')$ in a compact quantum semigroup $(\mathcal{A},\Delta)$ naturally acts on the algebra $\mathcal{A}$:
  $$\delta=(\mathrm{id} \otimes\pi)\Delta.$$ 
  Then the fixed point algebra $\mathcal{A}^\delta$ is called \emph{a quantum left coset space}. Space $\mathcal{A}^\delta$ with $\Delta|_\mathcal{A}^\delta\colon \mathcal{A}^\delta\to \mathcal{A}\otimes \mathcal{A}^\delta$ is \emph{a quantum left quotient space} $(\mathcal{A},\Delta)/(\mathcal{A}',\Delta')$. 
 
  Let us show that $G$ is a compact (quantum) subgroup in $(C^*_{red}(S),\Delta)$. Further $K$ denotes the commutator ideal in algebra $C^*_{red}(S)$.
 
 \begin{theorem}
 \label{subc} Ideal $K$ is a coideal in $C^*_{red}(S)$. Compact quantum semigroup $(C^*_{red}(S),\Delta)$ contains a classical compact quantum subgroup $(C(G),\Delta_G)$.
 \end{theorem}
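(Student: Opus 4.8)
The plan is to verify the coideal condition on a generating set of $K$, then to feed this into Theorem~\ref{coid} and transport the resulting quotient comultiplication through the isomorphism $C^*_{red}(S)/K\cong C(G)$ of Lemma~\ref{fakt}.

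First I would use that $K$ is, by definition, the closed two-sided ideal generated by the commutators $T_cT_d-T_dT_c$, $c,d\in\Gamma$. Writing $c=b-a$, the operator $T_c=T_a^*T_b$ is a monomial, so $x:=T_cT_d$ and $y:=T_dT_c$ are monomials and, by additivity of the index, both carry index $c+d$. Hence $\Delta(x)=x\otimes x$, $\Delta(y)=y\otimes y$, and the elementary identity $x\otimes x-y\otimes y=(x-y)\otimes y+x\otimes(x-y)$ together with $x-y\in K$ gives
$$\Delta(T_cT_d-T_dT_c)\in K\otimes C^*_{red}(S)+C^*_{red}(S)\otimes K=:\mathcal J .$$
Since $K$ is an ideal, $K\otimes C^*_{red}(S)$ and $C^*_{red}(S)\otimes K$ are closed ideals of $C^*_{red}(S)\otimes C^*_{red}(S)$, and the sum of two closed ideals of a $C^\ast$-algebra is again closed, so $\mathcal J$ is a closed ideal. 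As $\Delta$ is a $^\ast$-homomorphism, $\Delta\big(A(T_cT_d-T_dT_c)B\big)=\Delta(A)\,\Delta(T_cT_d-T_dT_c)\,\Delta(B)\in\mathcal J$ for all $A,B\in C^*_{red}(S)$, so $\Delta$ carries the algebraic ideal generated by the commutators into $\mathcal J$; by continuity of $\Delta$ and closedness of $\mathcal J$ we conclude $\Delta(K)\subseteq\mathcal J$, i.e. $K$ is a coideal.

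Next I would invoke Theorem~\ref{coid}: the coideal $K$ equips $C^*_{red}(S)/K$ with a comultiplication $\Delta_K$ satisfying $\Delta_K\xi=(\xi\otimes\xi)\Delta$ and makes $(C^*_{red}(S)/K,\Delta_K)$ a compact quantum subsemigroup of $(C^*_{red}(S),\Delta)$ through the quotient map $\xi$. Transporting $\Delta_K$ along the $^\ast$-isomorphism $C^*_{red}(S)/K\cong C(G)$ of Lemma~\ref{fakt}, under which $[T_c]\leftrightarrow\chi^c$, one computes $\Delta_G(\chi^c)=(\xi\otimes\xi)\Delta(T_c)=[T_c]\otimes[T_c]\leftrightarrow\chi^c\otimes\chi^c$, that is, $\Delta_G(\chi^c)$ is the function $(\alpha,\beta)\mapsto\chi^c(\alpha\beta)$ on $G\times G$. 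Because $\Gamma=\widehat G$, the characters $\chi^c$ span a dense $^\ast$-subalgebra of $C(G)$ (Stone--Weierstrass), so $\Delta_G$ is forced to be the classical comultiplication $\Delta_G(f)(\alpha,\beta)=f(\alpha\beta)$ induced by the group law of $G$. Since $G$ is a compact group, the map $(\alpha,\beta)\mapsto(\alpha,\alpha\beta)$ is a homeomorphism of $G\times G$, which yields the density conditions; hence $(C(G),\Delta_G)$ is a classical compact quantum group and, via $\xi$, a compact quantum subgroup of $(C^*_{red}(S),\Delta)$.

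The main obstacle is the first step: one must exploit the precise presentation of $K$ as generated by commutators of the \emph{monomials} $T_c$ of equal index so that the identity $x\otimes x-y\otimes y=(x-y)\otimes y+x\otimes(x-y)$ applies verbatim, and then be careful that $K\otimes C^*_{red}(S)+C^*_{red}(S)\otimes K$ is genuinely a closed ideal so that the inclusion propagates from the generators to all of $K$ by continuity of $\Delta$. The second step is then routine, given Lemma~\ref{fakt}, Theorem~\ref{coid} and the density of characters in $C(G)$.
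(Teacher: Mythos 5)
Your proposal is correct and follows essentially the same route as the paper: the coideal property is checked on the commutators of monomials via the identity $x\otimes x-y\otimes y=(x-y)\otimes y+x\otimes(x-y)$ (the paper uses the mirror-image splitting) and then propagated to all of $K$, and the quotient comultiplication is identified with the classical $\Delta_G$ on the characters $\chi^a$. Your version merely adds detail the paper leaves implicit — the closedness of $K\otimes C^*_{red}(S)+C^*_{red}(S)\otimes K$, the detour through Theorem~\ref{coid}, and the density conditions making $(C(G),\Delta_G)$ a quantum \emph{group} — all of which are correct.
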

 \begin{proof}
 By Lemma \ref{fakt}, quotient map $\pi\colon C^*_{red}(S)\to C^*_{red}(S)/K$ is a *-epimorphism on algebra $C(G)$. Here we have $\pi(T_a)=\chi^a$, and $\chi^a$ is a character of group $G$. For any $a\in S$ we have
 $$(\pi\otimes\pi)\Delta(T_a)=(\pi\otimes\pi)(T_a\otimes T_a)=\chi^a\otimes\chi^a.$$
 On the other and,  classical comultiplication on $C(G)$ acts on $\chi^a$ in the following way:
 $$\Delta_G(\chi^a)(\alpha,\beta)=\chi^a(\alpha\beta)=\chi^a(\alpha)\chi^a(\beta)=(\chi^a\otimes\chi^a)(\alpha\otimes\beta).$$
 Since chaacters generate algebra $C(G)$, we obtain
 $$(\pi\otimes\pi)\Delta=\Delta_G\pi.$$
 It is sufficient to show that $K$ is a coideal. To this end, consider monomials $V,W$. Then we have
 $$\Delta(VW-WV)=VW\otimes VW- WV\otimes WV= $$ $$=(VW -WV)\otimes VW+ WV\otimes VW-WV\otimes WV=$$ $$=(VW -WV)\otimes VW+ WV\otimes (VW-WV)\in K\otimes C^*_{red}(S)+C^*_{red}(S)\otimes K. $$
 Since $K$ is a closed ideal and $\Delta$ is a *-homomorphism, the required injection holds for all elements. \end{proof}
 
  Thus, $(C(G),\Delta_G)$ is a compact quantum subsemigroup in $(C^*_{red}(S),\Delta)$. Formally, $G\subset QS(C^*_{red}(S))$. Moreover, $QS(C^*_{red}(S))$ is endowed with an action of $G$.  
  
  \begin{proposition}
  The natural action of compact quantum subgroup $(C(G),\Delta_G)$ on the algebra $C^*_{red}(S)$ is non-ergodic and coincides with representation  $\tau\colon G\to \mathrm{Aut}C^*_{red}(S)$, defined in section \ref{dinam}. The left coset space equals $\mathfrak{A}_0$. Compact quantum quotient space $(C^*_{red}(S),\Delta)/(C(G),\Delta_G)$ is a compact (classical) quantum semigroup $(\mathfrak{A}_0,\Delta|_{\mathfrak{A}_0})$.
  \end{proposition}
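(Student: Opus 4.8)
The plan is to compute the natural action $\delta=(\mathrm{id}\otimes\pi)\Delta$ explicitly on monomials and to recognize it as the dynamical system $\tau$ of Section~\ref{dinam}. Recall from Theorem~\ref{subc} that $\pi\colon C^*_{red}(S)\to C(G)$ is the quotient map with $\pi(T_a)=\chi^a$, so that, via the isomorphism of Lemma~\ref{fakt}, $\pi(V)=\chi^{\mathrm{ind}V}$ for every monomial $V$. Since $\Delta(V)=V\otimes V$ on monomials (Lemma~\ref{cred}), we get $\delta(V)=V\otimes\pi(V)=V\otimes\chi^{\mathrm{ind}V}$. Using the identification $C^*_{red}(S)\otimes C(G)=C(G,C^*_{red}(S))$ (valid because $C(G)$ is commutative, hence nuclear), the element $V\otimes\chi^{c}$ is exactly the function $\alpha\mapsto\chi^{c}(\alpha)V$, which by the description of $\tau$ in Section~\ref{dinam} (where $\widehat{T_a}(\alpha)=\chi^a(\alpha)T_a$) equals $\alpha\mapsto\tau(\alpha)(V)$. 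Since $P(S)$ is dense in $C^*_{red}(S)$ and both $\delta$ and $A\mapsto(\alpha\mapsto\tau(\alpha)(A))$ are norm-continuous (the latter because $(C^*_{red}(S),G,\tau)$ is a $C^\ast$-dynamical system, Theorem~\ref{pred}), they agree everywhere: $\delta(A)(\alpha)=\tau(\alpha)(A)$. This proves that the natural action coincides with $\tau$.

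Next I would identify the fixed point algebra. By the previous paragraph, $A\in C^*_{red}(S)^\delta$ iff $\tau(\alpha)(A)=A$ for all $\alpha\in G$. Averaging against the normalized Haar measure $\mu$, set $E(A)=\int_G\tau(\alpha)(A)\,d\mu(\alpha)$; this is a well-defined norm-decreasing linear map, the integrand being norm-continuous. On a monomial $V$ with $\mathrm{ind}V=c$ one has $\tau(\alpha)(V)=\chi^c(\alpha)V$, hence $E(V)=\bigl(\int_G\chi^c\,d\mu\bigr)V$, which is $V$ if $c=0$ and $0$ otherwise by orthogonality of characters; thus $E$ is a projection of $C^*_{red}(S)$ onto $\mathfrak{A}_0$ (in fact $E$ is the conditional expectation $Q$ of Lemma~\ref{uslov}). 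Now if $A$ is $\tau$-invariant then $A=E(A)\in\mathfrak{A}_0$; conversely every element of $\mathfrak{A}_0$ is a norm-limit of index-$0$ monomials, each of which is $\tau$-invariant, so $\mathfrak{A}_0\subseteq C^*_{red}(S)^\delta$. Therefore $C^*_{red}(S)^\delta=\mathfrak{A}_0$, which is the quantum left coset space by definition. Since $\mathfrak{A}_0$ contains the non-scalar projection $T_a^*T_a$ for any $a\neq 0$, it is strictly larger than $\CC$, so the action is non-ergodic.

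Finally, for the quotient space I would check that $\Delta$ restricts to $\mathfrak{A}_0$. The closed linear span of index-$0$ monomials is $\mathfrak{A}_0$, and $\Delta(V)=V\otimes V\in\mathfrak{A}_0\otimes\mathfrak{A}_0$ whenever $\mathrm{ind}V=0$; by continuity of $\Delta$ we get $\Delta(\mathfrak{A}_0)\subseteq\mathfrak{A}_0\otimes\mathfrak{A}_0\subseteq C^*_{red}(S)\otimes\mathfrak{A}_0$, so $\Delta|_{\mathfrak{A}_0}$ has the form required in the definition of the quantum left quotient space. It is a unital $\ast$-homomorphism as a restriction of one, and the induced product on $\mathfrak{A}_0^*$ is associative by the same monomial computation as in Theorem~\ref{posle}; hence $(\mathfrak{A}_0,\Delta|_{\mathfrak{A}_0})$ is a compact quantum semigroup, and it is classical since $\mathfrak{A}_0$ is commutative by Lemma~\ref{acb}.

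As for the main difficulty: none of the steps is deep, but the one needing care is the identification $\delta=\tau$, i.e.\ matching the abstract minimal tensor product $C^*_{red}(S)\otimes C(G)$ with the concrete realization $C(G,C^*_{red}(S))$ used to define $\tau$ in Section~\ref{dinam}, together with the strong continuity of the $C^\ast$-dynamical system needed both to pass the identity from the dense subalgebra $P(S)$ to all of $C^*_{red}(S)$ and to make the Haar averaging $E$ meaningful.
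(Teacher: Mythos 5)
Your argument follows the same route as the paper's own (very terse) proof: identify $C^*_{red}(S)\otimes C(G)$ with $C(G,C^*_{red}(S))$ and check that $\delta(T_a)=T_a\otimes\chi^a$ coincides with $\tau(\cdot)(T_a)$ on generators; the paper does exactly this and then simply asserts the remaining claims. You supply the details the paper omits --- extending from $P(S)$ by density, the Haar-averaging projection $E$ showing the fixed-point algebra is no larger than $\mathfrak{A}_0$, and the verification that $\Delta$ restricts to $\mathfrak{A}_0$ --- and these are all correct. One slip: in the non-ergodicity step you invoke the ``non-scalar projection $T_a^*T_a$,'' but $T_a$ is an isometry, so $T_a^*T_a=I$ is scalar. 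The element you want is the range projection $T_aT_a^*$, which has index $0$, hence lies in $\mathfrak{A}_0$, and is a proper projection whenever $T_a$ is non-unitary (i.e.\ whenever $S$ is not a group); with that substitution the non-ergodicity claim is justified.
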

  
   \begin{proof} Take $a\in S$. We have $$\tau(\alpha)(T_a)=\chi^a(\alpha)T_a.$$
   This means that $$\tau(\cdot)(T_a)\in C(G,C^*_{red}(S))=C^*_{red}(S)\otimes C(G).$$
   Therefore, $$\tau(\cdot)(\cdot)\colon C^*_{red}(S)\to C^*_{red}(S)\otimes C(G), $$
  herewith $\tau(\cdot)(T_a)=T_a\otimes \chi^a$.  On the other hand,
  $$\delta(T_a)=(id\otimes\pi)(T_a\otimes T_a)=T_a\otimes\chi^a.$$
  As noted above, $\tau(\alpha)(V)=V$ for $V\in\mathfrak{A}_0$. Therefore the space $\mathfrak{A}_0$ is a left coset space with respect to the action $\delta$.\end{proof} 
 
 \subsection{Dual algebra and Haar functional}
 
 In the previous section we endowed the dual space $(C^*_{red}(S))^*$ with an associative Banach algebra structure. By cocommutativity of $\Delta$, this algebra is commutative. Futher recall that Haar functional is a state $h\in \mathcal{A}^*$, subject to relation $$h\times\phi=\phi\times h=\lambda_\phi\cdot h$$ for all $\phi\in\mathcal{A}^*$ herewith $\lambda_\phi\in\CC$.
 \begin{theorem}
 \label{shaar} Haar functional in $(C^*_{red}(S))^*$ is defined by relation $$h(A)=(Ae_0,e_0)$$ for all $A\in C^*_{red}(S)$.
 \end{theorem}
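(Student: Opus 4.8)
The plan is to verify directly that the vector functional $h=\omega_{e_0}$, $h(A)=(Ae_0,e_0)$, is a Haar functional. First I would observe that $h$ is automatically a state: as a vector functional it is positive, and $h(I)=(e_0,e_0)=1$. Next I compute $h\times\phi$ on the dense subalgebra $P(S)$. For $A=\sum_{i=1}^{n}\lambda_iV_i$ with monomials $V_i\in S^*_{red}$, the definition of $\Delta$ gives $\Delta(A)=\sum_i\lambda_i\,V_i\otimes V_i$, so
$$(h\times\phi)(A)=(h\otimes\phi)\Delta(A)=\sum_{i=1}^{n}\lambda_i\,h(V_i)\,\phi(V_i).$$
Thus everything reduces to computing $h(V)=(Ve_0,e_0)$ for a monomial $V$, and in particular to determining when it is nonzero.

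The key claim is: $h(V)\neq0$ if and only if $V=I$, in which case $h(V)=1$. If $\mathrm{ind}\,V=c\neq0$, then by Lemma~\ref{ind} $Ve_0$ is $e_c$ or $0$, hence $h(V)=0$. If $\mathrm{ind}\,V=0$, then by Lemma~\ref{ind} and the remark following it $V$ is an orthogonal projection whose range is the closed span of $\{e_a:a\in E_V\}$ for some $E_V\subseteq S$, and $h(V)\neq0$ exactly when $0\in E_V$. The main point is that $E_V$ is upward closed for the natural order: applying the word for $V$ in the generators $T_b,T_b^{*}$ to $e_a$ and to $e_{a+s}$ step by step, a factor $T_b^{*}$ annihilates $e_x$ only when $b\not\prec x$, and $b\prec x$ implies $b\prec x+s$; so the two orbits are killed or survive together, giving $a\in E_V\Rightarrow a+s\in E_V$ for every $s\in S$. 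Since $0\prec a$ for all $a\in S$, $0\in E_V$ forces $E_V=S$, i.e.\ $V=I$.

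Granting the claim, the displayed formula collapses: only the terms with $V_i=I$ survive, so $(h\times\phi)(A)=\big(\sum_{i:\,V_i=I}\lambda_i\big)\phi(I)=h(A)\,\phi(I)$ for all $A\in P(S)$. Both sides are norm-continuous in $A$ (using $\|\xi\times\eta\|\le\|\xi\|\,\|\eta\|$ for the left side and continuity of $h$ for the right), and $P(S)$ is dense in $C^*_{red}(S)$, so $h\times\phi=\phi(I)\cdot h$ on all of $C^*_{red}(S)$; by the cocommutativity of $\Delta$, which makes the dual algebra commutative, also $\phi\times h=\phi(I)\cdot h$. Hence $h$ is a Haar functional with $\lambda_\phi=\phi(I)$. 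The main obstacle in this argument is precisely the structural fact that an index-zero monomial fixing $e_0$ must be the identity: without it one only obtains the weaker relation $h\times h=h$ (which is immediate from Corollary~\ref{aea}), not the Haar property for an arbitrary $\phi$. Finally I would record uniqueness: for any Haar functional $h'$, evaluating $h'\times\phi=\lambda'_\phi h'$ at $I$ forces $\lambda'_\phi=\phi(I)$, and then $h'=h'\times h=h\times h'=h$ using commutativity of the dual algebra together with $h(I)=h'(I)=1$.
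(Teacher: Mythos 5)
Your proof is correct and follows the same route as the paper: everything reduces to the claim that $(Ve_0,e_0)\neq 0$ for a monomial $V$ forces $V=I$, after which $h\times\phi=\phi(I)\cdot h$ is computed on monomials and extended by linearity, density and cocommutativity. The paper dismisses that key claim with ``one can easily verify''; your upward-closedness argument for the fixed set $E_V$ of an index-zero monomial (using $0\prec a$ for all $a\in S$) is a correct and worthwhile filling-in of exactly that step.
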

 \begin{proof}
   One can easily verify that for any monomial $V$ in $C^*_{red}(S)$, the scalar $(Ve_0,e_0)$ is not equal to zero only in case $V=I$. Therefore, for any $\phi\in\mathcal{A}^*$ we have
   $$h\times\phi(V)=h(V)\cdot \phi(V)=\left\{ \begin{array}{cc}
   0,&V\neq I,\\ \phi(I),&V=I
   \end{array}\right. $$
   It follows that $h\times\phi=\phi(I)\cdot h$. \end{proof}

 Equation $C^*_{red}(S)=K\bigoplus \varrho(C(G))$ (see Theorem \ref{ktp}) implies
 $$(C^*_{red}(S))^*=K^\perp\bigoplus\varrho(C(G))^\perp.$$

Define mappings $$\Phi_c\colon \mathfrak{A}\to \mathfrak{A},\ A\mapsto T_c^*AT_c,\ c\in S$$
 $$\Phi_c^*\colon \mathfrak{A}^*\to\mathfrak{A}^*,\ (\Phi_c^*\xi)(A)=\xi(\Phi_c(A)).$$
 Obviously, these maps are continuous. 
 
 \begin{lemma}
For any $c\in S$ operator $\Phi_c^*$ is identical on $K^\perp$, and $\Phi_c^*\colon (\varrho(C(G)))^\perp\to (\varrho(C(G)))^\perp$.
 \end{lemma}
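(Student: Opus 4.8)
The plan is to reduce both halves of the statement to two facts about the $C^\ast$-algebra $C^*_{red}(S)$ itself: that $\Phi_c$ is the identity modulo the commutator ideal $K$, and that $\Phi_c$ fixes the copy $\varrho(C(G))$ of $C(G)$ pointwise. For the first fact, recall that the quotient map $\pi\colon C^*_{red}(S)\to C^*_{red}(S)/K$ is a $\ast$-homomorphism onto $C(G)$ with $\pi(T_c)=\chi^c$ (Lemma \ref{fakt} and the proof of Theorem \ref{subc}), and $\chi^c$ is a unitary element of the \emph{commutative} algebra $C(G)$. Hence for every $A\in C^*_{red}(S)$,
\[
\pi(\Phi_c(A))=\pi(T_c)^*\,\pi(A)\,\pi(T_c)=\pi(A)\,\pi(T_c)^*\pi(T_c)=\pi(A),
\]
using commutativity of $C(G)$ and $\pi(T_c)^*\pi(T_c)=(\chi^c)^*\chi^c=1$; in particular $\Phi_c(A)-A\in K$. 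Consequently, if $\xi\in K^\perp$ then $(\Phi_c^*\xi)(A)-\xi(A)=\xi\bigl(\Phi_c(A)-A\bigr)=0$ for all $A$, i.e. $\Phi_c^*\xi=\xi$, which is the first assertion. (Taking $A\in K$ in the display also gives $\Phi_c(K)\subseteq K$.)

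For the second fact I would work directly on $L^2(G,d\mu)$, realizing $\mathbb{T}_f$ as the compression $P_ST_fP_S$ restricted to $H^2(S)$ and $T_c$ as multiplication by $\chi^c$ on $H^2(S)$ (legitimate because $c\in S$, so $\chi^cH^2(S)\subseteq H^2(S)$), whose adjoint on $H^2(S)$ is $h\mapsto P_S(\chi^{-c}h)$. Then for $g\in H^2(S)$,
\[
T_c^*\mathbb{T}_fT_c\,g=P_S\bigl(\chi^{-c}\,P_S(f\chi^cg)\bigr)=P_S(fg)=\mathbb{T}_f\,g,
\]
where the middle equality holds because the discarded term $P_S\bigl(\chi^{-c}(I-P_S)(f\chi^cg)\bigr)$ vanishes: $(I-P_S)(f\chi^cg)$ has spectrum in $\Gamma\setminus S$, and multiplication by $\chi^{-c}$ with $c\in S$ keeps the spectrum in $\Gamma\setminus S$ since $\gamma-c\in S$ forces $\gamma=(\gamma-c)+c\in S$. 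Thus $\Phi_c(\mathbb{T}_f)=\mathbb{T}_f$ for all $f\in C(G)$. (Alternatively, use $\mathbb{T}_f=\sum_i C_i^f T_{d_i}$ for trigonometric polynomials $f$ together with $\Phi_c(T_a^*T_b)=T_{a+c}^*T_{b+c}=T_a^*T_b$, checked on the basis $\{e_x\}_{x\in S}$, and then pass to the norm limit using $\|\mathbb{T}_g\|=\|g\|_\infty$ and continuity of $\Phi_c$, cf. Corollary \ref{c1}.)

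Granting the two facts, the remaining half is immediate: for $\xi\in(\varrho(C(G)))^\perp$ and $f\in C(G)$ we have $(\Phi_c^*\xi)(\mathbb{T}_f)=\xi(\Phi_c(\mathbb{T}_f))=\xi(\mathbb{T}_f)=0$, so $\Phi_c^*\xi$ again annihilates $\varrho(C(G))$. The only step demanding real care is the second fact — that $\Phi_c$ fixes \emph{all} of $\varrho(C(G))$ rather than merely a dense subspace of it; this is precisely where the spectral bookkeeping on $H^2(S)$ (or, on the alternative route, the isometry property $\|\mathbb{T}_g\|=\|g\|_\infty$) does the work.
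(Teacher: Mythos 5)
Your proof is correct, and for the first assertion it takes a genuinely different route from the paper's. The paper works entirely at the operator level: $K$ is an ideal, so $\Phi_c(K)\subseteq K$, and $\Phi_c(T_a^*T_b)=T_{a+c}^*T_{b+c}=T_a^*T_b$, so $\Phi_c$ fixes the monomials spanning $\varrho(C(G))$; combined (implicitly) with the splitting $C^*_{red}(S)=\varrho(C(G))\oplus K$ from Theorem \ref{ktp} and a density/continuity step, this yields both claims. You instead push $\Phi_c$ through the quotient map of Lemma \ref{fakt} and use commutativity of $C(G)$ plus unitarity of $\chi^c$ to get $\Phi_c(A)-A\in K$ for \emph{every} $A\in C^*_{red}(S)$, which gives $\Phi_c^*\xi=\xi$ on $K^\perp$ in one line without ever invoking the splitting; this is cleaner and more robust, since it does not depend on how $A$ decomposes. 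For the invariance of $(\varrho(C(G)))^\perp$ you still need the pointwise identity $\Phi_c(\mathbb{T}_f)=\mathbb{T}_f$, which you establish by an explicit computation on $H^2(S)$; your spectral bookkeeping there is sound (the point being that $c\in S$ and $\gamma-c\in S$ force $\gamma\in S$, so $P_S\chi^{-c}(I-P_S)=0$ on the relevant terms). Your parenthetical alternative --- the monomial identity $\Phi_c(T_a^*T_b)=T_a^*T_b$ plus $\|\mathbb{T}_g\|=\|g\|_\infty$ and continuity of $\Phi_c$ --- is exactly the paper's argument with the limiting step made explicit where the paper leaves it tacit. In short: same key fact for the second half, a slicker quotient-map argument for the first.
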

 \begin{proof}
  Since $K$ is ideal, $\Phi_c\colon K\to K$. Any monomial $V$ in $\varrho(C(G))$ has a form $T_a^*T_b$ for some $a,b\in S$.  Therefore,
  $$\Phi_c(V)=T_c^*T_a^*T_bT_c=T_a^*T_b=V.$$ 
  Consequently, $\xi\in K^\perp$ if and only if $\Phi_c^*(\xi)=\xi$.
 \end{proof}

 Obviously, $(\varrho(C(G)))^\perp$ is an ideal in $\mathfrak{A}^*$ with respect to mulitplication $\times$. Denote by $M(G)$ the space of regular Borel measures on group $G$, dual of $\Gamma$.
 
 \begin{theorem}
 \label{kort} $K^\perp$ is a Banach algebra isomorphic to $M(G)$. The following exact sequence splits.
   $$ 0\rightarrow (\varrho(C(G)))^\perp \rightarrow \mathfrak{A}^*\rightarrow K^\perp=M(G)\rightarrow 0$$
  
 \end{theorem}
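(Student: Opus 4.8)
The plan is to dualise the direct–sum decomposition of Theorem~\ref{ktp} and then check that the resulting Banach–space splitting is compatible with the multiplication $\times$ and carries $K^\perp$ onto $M(G)$ with its convolution product.

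\textbf{Step 1 (the underlying split exact sequence of Banach spaces).} By Theorem~\ref{ktp}, $C^*_{red}(S)=K\oplus\varrho(C(G))$ is a topological direct sum, where $\varrho\colon C(G)\to C^*_{red}(S)$, $f\mapsto\mathbb{T}_f$, is an isometric section of the quotient map $\pi\colon C^*_{red}(S)\to C(G)$ (here $C^*_{red}(S)/K\cong C(G)$ by Lemma~\ref{fakt}). Dualising a topological direct sum yields $\mathfrak{A}^*=(C^*_{red}(S))^*=K^\perp\oplus(\varrho(C(G)))^\perp$, and the standard identification of the dual of a quotient gives $K^\perp\cong(C^*_{red}(S)/K)^*\cong C(G)^*=M(G)$ isometrically. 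Consequently the sequence
$$0\to(\varrho(C(G)))^\perp\hookrightarrow\mathfrak{A}^*\xrightarrow{\;P^*\;}K^\perp\to0$$
is exact, where $P=\varrho\circ\pi$ is the idempotent on $C^*_{red}(S)$ with range $\varrho(C(G))$ and kernel $K$; its adjoint $P^*$ has range $K^\perp$, kernel $(\varrho(C(G)))^\perp$, and acts as the identity on $K^\perp$, so the inclusion $K^\perp\hookrightarrow\mathfrak{A}^*$ is a bounded linear section.

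\textbf{Step 2 ($K^\perp$ is a subalgebra and $\times$ is convolution).} For $\xi,\eta\in K^\perp$ and $A\in K$, the coideal property of $K$ (Theorem~\ref{subc}), $\Delta(K)\subseteq K\otimes C^*_{red}(S)+C^*_{red}(S)\otimes K$, forces $(\xi\otimes\eta)\Delta(A)=0$ since $\xi$ and $\eta$ annihilate $K$; hence $\xi\times\eta\in K^\perp$, and together with $\|\xi\times\eta\|\le\|\xi\|\,\|\eta\|$ and closedness this makes $K^\perp$ a Banach subalgebra of $(\mathfrak{A}^*,\times)$. Let $\Phi\colon K^\perp\to M(G)$, $\Phi(\xi)=\mu_\xi$, be the isometric isomorphism of Step~1, so $\xi(A)=\int_G\pi(A)\,d\mu_\xi$ for all $A\in C^*_{red}(S)$. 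Using $\mathbb{T}_{\chi^c}=T_c$ (Corollary~\ref{c1}) and $[V]=[T_{\mathrm{ind}\,V}]$ in the quotient (Lemma~\ref{fakt}), one gets $\xi(V)=\int_G\chi^{\mathrm{ind}\,V}\,d\mu_\xi$ for every monomial $V$; since $\Delta$ acts diagonally on monomials, for $A=\sum_i\lambda_iV_i\in P(S)$ with $d_i=\mathrm{ind}\,V_i$ and $g_A=\sum_i\lambda_i\chi^{d_i}=\pi(A)$,
$$(\xi\times\eta)(A)=\sum_i\lambda_i\,\xi(V_i)\,\eta(V_i)=\sum_i\lambda_i\left(\int_G\chi^{d_i}\,d\mu_\xi\right)\left(\int_G\chi^{d_i}\,d\mu_\eta\right)=\int_G g_A\,d(\mu_\xi*\mu_\eta),$$
the last equality being the Fourier identity $\widehat{\mu*\nu}=\hat\mu\,\hat\nu$ for measures on the compact group $G$. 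As $P(S)$ is dense and both sides lie in $K^\perp$, we conclude $\mu_{\xi\times\eta}=\mu_\xi*\mu_\eta$, so $\Phi$ is an isometric isomorphism of Banach algebras $(K^\perp,\times)\cong(M(G),*)$, which is the first assertion.

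\textbf{Step 3 (splitting as Banach algebras) and the main obstacle.} As recorded just before the statement, $(\varrho(C(G)))^\perp$ is a closed two-sided ideal of $\mathfrak{A}^*$; hence for $\xi_j\in K^\perp$, $a_j\in(\varrho(C(G)))^\perp$ one has $(\xi_1+a_1)\times(\xi_2+a_2)\equiv\xi_1\times\xi_2$ modulo $(\varrho(C(G)))^\perp$, so $P^*$ is an algebra epimorphism, while by Step~2 the section $K^\perp\hookrightarrow\mathfrak{A}^*$ is an algebra homomorphism and a one-sided inverse of $P^*$; thus the sequence splits in the category of Banach algebras. The step I expect to require genuine care is Step~2: establishing that $K^\perp$ is stable under $\times$ (this is precisely where the coideal property of $K$, i.e. the fact that $G$ sits inside $QS(C^*_{red}(S))$ as a quantum subgroup, Theorem~\ref{subc}, is used) and, above all, that the canonical identification $K^\perp\cong M(G)$ intertwines the cocommutative product $\times$ with convolution — which hinges on the bookkeeping $\mathbb{T}_{\chi^c}=T_c$, the quotient description $C^*_{red}(S)/K\cong C(G)$, and the Fourier transform turning convolution into pointwise product. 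Everything else, including the splitting, is formal once the dual direct-sum decomposition of Theorem~\ref{ktp} is in hand.
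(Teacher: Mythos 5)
Your argument is correct. Note that the paper states Theorem~\ref{kort} with no proof at all, so there is nothing to compare against line by line; what you have written is a complete proof assembled from exactly the ingredients the paper has already prepared: the topological direct sum $C^*_{red}(S)=K\oplus\varrho(C(G))$ of Theorem~\ref{ktp} dualises to $\mathfrak{A}^*=K^\perp\oplus(\varrho(C(G)))^\perp$ with $K^\perp\cong(C^*_{red}(S)/K)^*\cong M(G)$ isometrically; the coideal property of $K$ from Theorem~\ref{subc} gives stability of $K^\perp$ under $\times$; and the ideal property of $(\varrho(C(G)))^\perp$ recorded just before the statement makes the splitting multiplicative. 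One remark on Step~2: your monomial-by-monomial Fourier computation is fine, but it can be compressed. Since $\xi\in K^\perp$ means $\xi=\mu_\xi\circ\pi$ with $\mu_\xi\in M(G)$, the intertwining relation $(\pi\otimes\pi)\Delta=\Delta_G\pi$ established in the proof of Theorem~\ref{subc} gives directly
$$(\xi\times\eta)(A)=(\mu_\xi\otimes\mu_\eta)(\pi\otimes\pi)\Delta(A)=(\mu_\xi\otimes\mu_\eta)\Delta_G(\pi(A))=(\mu_\xi*\mu_\eta)(\pi(A)),$$
so $\mu_{\xi\times\eta}=\mu_\xi*\mu_\eta$ without passing through $P(S)$ or the identity $\widehat{\mu*\nu}=\hat\mu\hat\nu$; this also makes it transparent that the isomorphism $K^\perp\cong M(G)$ is forced by $G$ being a quantum subgroup. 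Everything else in your write-up (exactness with $P^*=(\varrho\circ\pi)^*$ as the quotient map, $P^*|_{K^\perp}=\mathrm{id}$, the inclusion of $K^\perp$ as an algebra section) is accurate.
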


 \subsection{Category of quantum semigroups}
 
 Consider a category $\mathcal{S}_{ab}$ of discrete abelian cancellative semigroups, with arrows (morphisms) being semigroup morphisms. Denote by $\mathcal{QS}_{red}$ a category of compact quantum semigroups $(C^*_{red}(S),\Delta)$ constructed above, for all semigroups $S\in Obj(\mathcal{S}_{ab})$, herewith unital compact quantum semigroup morphisms. Recall that a *-homomorphism $\pi\colon C^*_{red}(S_1)\to C^*_{red}(S_2)$ is called \emph{ a unital morphism of the corresponding compact quantum semigroups}, if it is unital and satisfies following diagram.
 
   \begin{equation}\label{mor}\begin{CD}
    C^*_{red}(S_1) @>\pi>> C^*_{red}(S_2) \\
    @V\Delta_1 VV @VV\Delta_2 V \\
    C^*_{red}(S_1)\otimes C^*_{red}(S_1) @>\pi\otimes\pi >> C^*_{red}(S_2)\otimes C^*_{red}(S_2)
    \end{CD}
    \end{equation}
 Thus, morphisms in category $\mathcal{QS}_{red}$ are compact quantum semigroup morphisms. 

 \begin{lemma}
  \label{isom} Suppose that $A$ is an isometry in $C^*_{red}(S)$, such that $\Delta(A)=A\otimes A$. Then $A=T_c$, where $c\in S$. 
  \end{lemma}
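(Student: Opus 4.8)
The plan is to analyze the action of $A$ on the orthonormal basis $\{e_a\}_{a\in S}$ using the decomposition $C^*_{red}(S)=\bigoplus_{c\in\Gamma}\mathfrak{A}_c$ from Theorem \ref{gr}, together with Corollary \ref{aea} which describes $\Delta(A)$ on the diagonal vectors $e_a\otimes e_a$. First I would expand $A$ through its $c$-coefficients: write $Ae_0=\sum_{i}\lambda_i e_{a_i}$ with the $a_i\in S$ distinct, so that by Corollary \ref{aea} we get $\Delta(A)(e_0\otimes e_0)=\sum_i\lambda_i(e_{a_i}\otimes e_{a_i})$. On the other hand $\Delta(A)=A\otimes A$, so $(A\otimes A)(e_0\otimes e_0)=(Ae_0)\otimes(Ae_0)=\sum_{i,j}\lambda_i\lambda_j\,e_{a_i}\otimes e_{a_j}$. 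Comparing coefficients of the basis vectors $e_{a_i}\otimes e_{a_j}$ of $l^2(S)\otimes l^2(S)$ forces $\lambda_i\lambda_j=0$ whenever $i\neq j$, hence at most one $\lambda_i$ is nonzero; and the surviving term satisfies $\lambda=\lambda^2$, so $\lambda=1$ (it is nonzero because $A$ is an isometry, so $\|Ae_0\|=1$). Therefore $Ae_0=e_c$ for a single $c\in S$.

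Next I would propagate this to all basis vectors. Since $A$ is an isometry with $\Delta(A)=A\otimes A$, the same argument applied with Corollary \ref{aea} gives, for each $a\in S$, that $\Delta(A)(e_a\otimes e_a)=\sum_i\lambda_i^{(a)}(e_{a_i}\otimes e_{a_i})$ where $Ae_a=\sum_i\lambda_i^{(a)}e_{a_i}$; equating with $(Ae_a)\otimes(Ae_a)$ shows again that $Ae_a$ is a scalar multiple of a single basis vector, say $Ae_a=\mu_a e_{f(a)}$ for some function $f\colon S\to S$ and scalars $\mu_a$. Isometry of $A$ forces $|\mu_a|=1$ and injectivity of $f$. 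Now I would use the multiplicativity relation $T_b A = \,?\,$; more precisely, since $A\in C^*_{red}(S)$ and $A\otimes A=\Delta(A)$, consider how $A$ interacts with the grading. The cleanest route: the relation $Ae_0=e_c$ together with $A$ being a norm limit of elements of $P(S)$ and the commutativity of the diagram \eqref{qdiag} shows that $A$ lies in $\mathfrak{A}_c$ (its only nonzero coefficient is $A_c=Q_c(A)$), because $Q_c(A)e_0=e_c$ forces all other coefficients $Q_d(A)$ to vanish on $e_0$, and then — using that these coefficients are themselves isometry-related — on all of $l^2(S)$. Hence $A\in\mathfrak{A}_c$, and by Lemma \ref{l441} any element of $\mathfrak{A}_c$ that is an isometry acts by $e_a\mapsto e_{a+c}$ on its support; isometry means the support is all of $S$, i.e. $a+c\in S$ for every $a\in S$, which with $0\in S$ gives $c\in S$ and $Ae_a=e_{a+c}=T_c e_a$ for all $a$, so $A=T_c$.

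The main obstacle I anticipate is the middle step: cleanly deducing from $\Delta(A)=A\otimes A$ (a statement about diagonal vectors $e_a\otimes e_a$ only, via Corollary \ref{aea}) that $A$ is \emph{homogeneous}, i.e. concentrated in a single graded component $\mathfrak{A}_c$. Corollary \ref{aea} only controls $\Delta(A)$ on the subspace $H_0$, so one must argue that knowing $Ae_a=\mu_a e_{f(a)}$ for all $a$ already pins down $f(a)=a+c$ with a \emph{fixed} $c$ independent of $a$ — this is where I would invoke that $A$ is a limit of polynomials in the $T_b, T_b^*$ and that each such polynomial, restricted to its action on basis vectors, shifts indices by amounts lying in a finite subset of $\Gamma$, combined with the relation $f(a) - a = \mathrm{ind}(\text{relevant monomial})$ being forced constant by the isometry condition $\langle Ae_a, Ae_b\rangle = \delta_{a,b}$ together with compatibility $A T_b e_a = A e_{a+b}$. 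Once homogeneity is established the rest is routine via Lemma \ref{l441} and the isometry hypothesis. A secondary small point to handle carefully is ruling out $c\in\Gamma\setminus S$: if $c\notin S$ then $0+c\notin S$, so $Ae_0$ could not equal $e_c$, already contradicting the first step — so in fact $c\in S$ falls out automatically.
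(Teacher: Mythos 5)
Your first step is exactly the paper's: applying Corollary \ref{aea} to $\Delta(A)(e_a\otimes e_a)=(Ae_a)\otimes(Ae_a)$ forces $\lambda_i\lambda_j=0$ for $i\neq j$ and $\lambda=\lambda^2$, so $Ae_a=e_{f(a)}$ for a single basis vector. The gap is precisely where you say it is: showing $f(a)=a+c$ with $c$ independent of $a$. Neither of your two sketches closes it. The route via the coefficients $Q_d(A)$ is circular: $Q_d(A)e_a=P_{a+d}e_{f(a)}$ vanishes for all $d\neq c$ and all $a$ \emph{exactly when} $f(a)=a+c$ for all $a$, which is the statement to be proved; ``these coefficients are themselves isometry-related'' is not an argument. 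The route via ``isometry forces $f(a)-a$ constant'' is false as stated: an isometry sending each $e_a$ to some $e_{f(a)}$ only forces $f$ to be injective (a basis permutation is a unitary with non-constant $f(a)-a$), and being a norm limit of polynomials in the $T_b,T_b^*$ puts no finite bound on the possible shifts. Corollary \ref{aea} only sees the diagonal subspace $H_0$, and no amount of information on $H_0$ alone determines $f$ beyond injectivity.

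The missing idea in the paper is to use the hypothesis $\Delta(A)=A\otimes A$ on the \emph{off-diagonal} basis vectors. The decomposition $l^2(S)\otimes l^2(S)=\bigoplus_{l\in\Gamma}H_l$ introduced before Lemma \ref{cred} has each $H_l$ invariant under $\Delta(P(S))$, hence under $\Delta(C^*_{red}(S))$ by continuity. Now $e_a\otimes e_d\in H_{d-a}$, so $\Delta(A)(e_a\otimes e_d)=Ae_a\otimes Ae_d=e_{f(a)}\otimes e_{f(d)}$ must again lie in $H_{d-a}$, i.e.\ $f(d)-f(a)=d-a$ for all $a,d\in S$. Taking $a=0$ and $c=f(0)$ gives $f(d)=d+c$ for every $d$, so $Ae_d=e_{d+c}=T_ce_d$ and $A=T_c$; your final observation that $Ae_0=e_c\in l^2(S)$ forces $c\in S$ is correct. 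So the statement is true and your overall strategy is sound, but the homogeneity step needs the $H_l$-invariance argument, which your proposal does not supply.
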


 \begin{proof}
 Let us show that  the set $\{e_a\}_{a\in S}$ is $A$-invariant. Допустим, 
 $$Ae_a=\sum\limits_{i=1}^{\infty}\lambda_i e_{a_i}.$$ 
 Then we have
 $$ (A\otimes A)(e_a\otimes e_a)=Ae_a\otimes Ae_a=\sum\limits_{i,j=1}^{\infty}\lambda_i\lambda_j e_{a_i}\otimes e_{a_j}.$$
 On the other hand, by Corollary \ref{aea}, 
 $$ \Delta(A)(e_a\otimes e_a)=\sum\limits_{i=1}^{\infty}\lambda_i(e_{a_i}\otimes e_{a_i}). $$
 The family $\{e_a\otimes e_b \}_{a,b\in S}$ forms an orthonormal basis in $l^2(S)\otimes l^2(S)$. Therefore, $\lambda_i\lambda_j=0$ for $i\neq j$. Consequently, all $\lambda_j=0$ except for one. further, $Ae_a=\lambda e_b$. By the same reasoning, we obtain $\lambda=\lambda^2$. It follows that $\lambda=1$.
 
 Assume that $Ae_a=e_b$ with $b=c-a$. Let us verify $Ae_d=e_{d+c}$ for any $d\in S$. Suppose that $Ae_d=e_{d+k}$, $k\neq c$. Element  $e_a\otimes e_d$ lies in Hilbert space $H_l$, where $l=d-a$, i.e. the space generated by family $\{ e_{a_i}\otimes e_{b_i}  \}_{i=1}^\infty$ such that $b_i-a_i=l$. Since $\Delta(A)(e_a\otimes e_d)$ lies in $H_l$, we obtain
 $$\Delta(A)(e_a\otimes e_d)=e_{a+c}\otimes e_{d+k},$$
 with $(d+k)-(a+c)=l$. Consequently, $k=c$. Since $A$ is an isometry, $c\in S$ and $A=T_c$. \end{proof}

 \begin{theorem}\label{cat}
 There exists an injective functor which maps $\mathcal{QS}_{red}$ to the category $\mathcal{S}_{ab}$.
 \end{theorem}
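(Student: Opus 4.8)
The plan is to define the functor $F\colon\mathcal{QS}_{red}\to\mathcal{S}_{ab}$ on objects by $F\bigl((C^*_{red}(S),\Delta)\bigr)=S$ and on morphisms by reading off the action on the generators $T_a$, then to verify functoriality and injectivity on objects and on morphisms. The observation that makes the object assignment canonical is that $S$ is recovered \emph{intrinsically} from the pair $(C^*_{red}(S),\Delta)$: by Lemma \ref{isom} the set of group-like isometries $\{A\in C^*_{red}(S): A^*A=I,\ \Delta(A)=A\otimes A\}$ equals $\{T_c:c\in S\}$, and $c\mapsto T_c$ is a semigroup isomorphism of $S$ onto this set (a homomorphism because $T_cT_d=T_{c+d}$, injective because $T_ce_0=e_c$). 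Consequently isomorphic objects of $\mathcal{QS}_{red}$ come from isomorphic semigroups, so $F$ is injective on objects.

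For a unital morphism $\pi\colon C^*_{red}(S_1)\to C^*_{red}(S_2)$ — that is, a unital ${}^\ast$-homomorphism with $\Delta_2\pi=(\pi\otimes\pi)\Delta_1$ — I would analyze the images $\pi(T_a)$, $a\in S_1$. Each is an isometry, since $\pi(T_a)^*\pi(T_a)=\pi(T_a^*T_a)=\pi(I)=I$, and it is group-like, since $\Delta_2(\pi(T_a))=(\pi\otimes\pi)\Delta_1(T_a)=(\pi\otimes\pi)(T_a\otimes T_a)=\pi(T_a)\otimes\pi(T_a)$. Hence Lemma \ref{isom} yields a unique $\phi_\pi(a)\in S_2$ with $\pi(T_a)=T_{\phi_\pi(a)}$, and I set $F(\pi)=\phi_\pi$. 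That $\phi_\pi\colon S_1\to S_2$ is a unital semigroup morphism follows from $T_{\phi_\pi(a+b)}=\pi(T_aT_b)=T_{\phi_\pi(a)}T_{\phi_\pi(b)}=T_{\phi_\pi(a)+\phi_\pi(b)}$ together with injectivity of $c\mapsto T_c$ on $S_2$, and from $\pi(T_0)=\pi(I)=I=T_0$.

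Functoriality is then routine: the identity ${}^\ast$-homomorphism on $C^*_{red}(S)$ fixes every $T_a$, so $F(\mathrm{id})=\mathrm{id}_S$; and for composable $\pi,\pi'$ one gets $(\pi'\pi)(T_a)=\pi'(T_{\phi_\pi(a)})=T_{\phi_{\pi'}(\phi_\pi(a))}$, hence $F(\pi'\pi)=F(\pi')\circ F(\pi)$. Finally, $F$ is faithful: if $F(\pi_1)=F(\pi_2)$ then $\pi_1$ and $\pi_2$ agree on all $T_a$, hence — being ${}^\ast$-homomorphisms — on all $T_a^*$, hence on the $C^*$-algebra generated by $\{T_a,T_a^*:a\in S_1\}$, which is $C^*_{red}(S_1)$; so $\pi_1=\pi_2$.

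The entire argument rests on Lemma \ref{isom}; everything else is bookkeeping. The step I expect to need the most care is the matching of hypotheses: one must see that the morphism square (\ref{mor}) supplies exactly the two properties — $\pi(T_a)^*\pi(T_a)=I$ from ``unital ${}^\ast$-homomorphism'' and $\Delta_2(\pi(T_a))=\pi(T_a)\otimes\pi(T_a)$ from the intertwining of comultiplications — under which Lemma \ref{isom} forces $\pi(T_a)$ to be a generator $T_{\phi_\pi(a)}$ with $\phi_\pi(a)\in S_2$. Once this is in place, injectivity on objects and faithfulness on morphisms both follow from the generators determining the algebra.
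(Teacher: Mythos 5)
Your proof is correct and follows the same route as the paper: the whole argument rests on applying Lemma \ref{isom} to $\pi(T_a)$, which is an isometry because $\pi$ is a unital ${}^\ast$-homomorphism and group-like because of diagram (\ref{mor}). You go further than the paper's (very terse) proof by also verifying functoriality, injectivity on objects via the intrinsic characterization of $\{T_c\}$ as the group-like isometries, and faithfulness — all of which the theorem's word ``injective'' actually requires and the paper leaves implicit.
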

 \begin{proof}
 By commutativity of (\ref{mor}), we have $\Delta_2(\pi(T_a))=\pi(T_a)\otimes\pi(T_a)$. Operators $T_a$ and $\pi(T_a)$ are isometric. By Lemma \ref{isom}, $\pi(T_a)=T_{a'}$ for some $a'\in S$. Thus, $\pi$ generates a homomorphism $\phi\colon S_1\to S_2$, $a\mapsto a'$.  \end{proof}
 
 An automorphism  $\sigma$ of  algebra $C^*_{red}(S)$ is called \emph{a compact quantum semigroup automorphism} of $(C^*_{red}(S),\Delta)$, if $(\sigma\otimes\sigma)\Delta=\Delta\sigma$.
 
 Denote by $Aut((C^*_{red}(S),\Delta))$ the automorphism group of the compact quantum semigroup $(C^*_{red}(S),\Delta)$.
 
 \begin{corollary}
 $Aut((C^*_{red}(S),\Delta))$ is isomorphic to automorphism group $Aut(S)$. 
 \end{corollary}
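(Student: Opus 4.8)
The plan is to show that the semigroup homomorphism $\sigma\mapsto\phi_\sigma$ furnished by the functor of Theorem~\ref{cat} restricts to a group isomorphism onto $\mathrm{Aut}(S)$. The first step is to recognize that a compact quantum semigroup automorphism $\sigma$ of $(C^*_{red}(S),\Delta)$ is exactly an invertible unital morphism of $(C^*_{red}(S),\Delta)$ to itself in the sense of diagram~(\ref{mor}): the requirement $(\sigma\otimes\sigma)\Delta=\Delta\sigma$ is precisely the commutativity of that diagram with $S_1=S_2=S$ and $\pi=\sigma$. Hence, by the argument proving Theorem~\ref{cat} (which invokes Lemma~\ref{isom} to conclude $\sigma(T_a)=T_{a'}$ for an isometry), every such $\sigma$ determines a semigroup homomorphism $\phi_\sigma\colon S\to S$ characterized by $\sigma(T_a)=T_{\phi_\sigma(a)}$ for all $a\in S$.

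Next I would check that $\sigma\mapsto\phi_\sigma$ is a group homomorphism into $\mathrm{Aut}(S)$. For two such automorphisms one computes $(\sigma\tau)(T_a)=\sigma(T_{\phi_\tau(a)})=T_{\phi_\sigma(\phi_\tau(a))}$, whence $\phi_{\sigma\tau}=\phi_\sigma\circ\phi_\tau$ and $\phi_{\mathrm{id}}=\mathrm{id}_S$; applying this to $\tau=\sigma^{-1}$ shows $\phi_\sigma$ is invertible with inverse $\phi_{\sigma^{-1}}$, so indeed $\phi_\sigma\in\mathrm{Aut}(S)$. Injectivity is immediate: if $\phi_\sigma=\mathrm{id}_S$ then $\sigma$ fixes every $T_a$, hence every $T_a^\ast$, hence every element of the dense ${}^\ast$-subalgebra generated by the monomials, so $\sigma=\mathrm{id}$ by continuity.

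For surjectivity, given $\psi\in\mathrm{Aut}(S)$ I would define a unitary $U_\psi$ on $l^2(S)$ by $U_\psi e_a=e_{\psi(a)}$; this is well defined because $\psi$ permutes the orthonormal basis $\{e_a\}_{a\in S}$. A direct computation using that $\psi$ is a semigroup homomorphism gives $U_\psi T_a U_\psi^\ast=T_{\psi(a)}$ for every $a\in S$, so $\mathrm{Ad}(U_\psi)$ carries the generating set $\{T_a,T_a^\ast\}_{a\in S}$ onto itself and therefore restricts to a ${}^\ast$-automorphism $\sigma_\psi$ of $C^*_{red}(S)$ with $\sigma_\psi(T_a)=T_{\psi(a)}$. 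It then remains to verify $(\sigma_\psi\otimes\sigma_\psi)\Delta=\Delta\sigma_\psi$; this is done on the dense subalgebra $P(S)$, where $\sigma_\psi$ sends a monomial $V$ to another monomial, so the defining formula $\Delta(\sum\lambda_iV_i)=\sum\lambda_iV_i\otimes V_i$ applied to both $\sum\lambda_iV_i$ and $\sum\lambda_i\sigma_\psi(V_i)$ yields the intertwining relation on $P(S)$, and the general case follows by continuity of $\Delta$, $\sigma_\psi$ and $\sigma_\psi\otimes\sigma_\psi$. Since $\phi_{\sigma_\psi}=\psi$, the map $\sigma\mapsto\phi_\sigma$ is onto $\mathrm{Aut}(S)$.

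I expect the only genuinely delicate point to be the reduction of ``quantum semigroup automorphism'' to ``invertible arrow of $\mathcal{QS}_{red}$'' so that Theorem~\ref{cat} and Lemma~\ref{isom} apply verbatim; the intertwining verification for $\sigma_\psi$ and the group-theoretic bookkeeping are routine.
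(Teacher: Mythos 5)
Your proof is correct and follows exactly the route the paper intends: the paper states this corollary without proof as an immediate consequence of Theorem~\ref{cat} and Lemma~\ref{isom}, and your reduction of a quantum semigroup automorphism to an invertible arrow of $\mathcal{QS}_{red}$ (using that $\sigma(T_a)$ is an isometry satisfying $\Delta(\sigma(T_a))=\sigma(T_a)\otimes\sigma(T_a)$) is precisely how that consequence is drawn. Your surjectivity argument via the unitary $U_\psi e_a=e_{\psi(a)}$ and the verification of the intertwining relation on the dense subalgebra $P(S)$ correctly supply the one step the paper leaves implicit.
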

 
 Note that for non-negative real numbers $\mathbb{R}_+$ and non-negative rational numbers $\mathbb{Q}_+$ semigroups every morphism is an automorphism, and $Aut(\mathbb{R}_+)=\mathbb{R}$, $Aut(\mathbb{Q}_+)=\mathbb{Q}$. For non-negative integers $\mathbb{Z}_+$ we have that group $Aut(\mathbb{Z}_+)$ is trivial, and the morphism semigroup matches $\mathbb{Z}_+$.
 
 \subsection{Different comultiplications on the Toeplitz algebra}
 
 The structure of a compact quantum semigroup on the Toeplitz algebra $\mathcal{T}$ is given in \cite{AGL2}, denoted by $(\mathcal{T},\Delta)$. One can ask if there exists another comultiplication on $\mathcal{T}$. 
 
 The Toeplitz algebra is generated by the regular isometric representation of semigroup $\mathbb{Z}_+$, so $\mathcal{T}=C^*_{red}(\mathbb{Z}_+)$. Obviously, the comultiplication on $C^*_{red}(\mathbb{Z}_+)$ constructed in section \ref{copr} matches the comultiplication on the Toeplitz algebra. However, $\mathbb{Z}_+$ is not unique as an abelian semigroup which regular representation generates algebra $\mathcal{T}$.
 
 Consider semigroup $S=\{0,2,3,4,...\}$. Obviously, $S$ generates group $\mathbb{Z}$: $$\mathbb{Z}=S-S.$$ Then $C^*_{red}(S)$ is the Toeplitz agebra. Indeed, the following combination of monomials represents a right-shift operator on $l^2(S)$:
 $$T=(I-T_3^*T_2T_2^*T_3)T_2+T_2^*T_3.$$
 Futher, $(C^*_{red}(S),\Delta_S)$ is a compact quantum semigroup with $\Delta_S$ defined in Section \ref{copr}. But this comultiplication $\Delta_S$ differs from comultiplication $\Delta$:
 $$\Delta(T)=T\otimes T,$$ 
 $$\Delta_S(T)=T_2\otimes T_2 - T_3^*T_2T_2^*T_3T_2\otimes T_3^*T_2T_2^*T_3T_2+T_2^*T_3\otimes T_2^*T_3.$$
 
 Thus, the compact quantum semigroups $(C^*_{red}(S),\Delta_S)$ and $(\mathcal{T},\Delta)$ do not match. And the results of this paper show, that generally, the Toeplitz algebra can be given at least as many structures of compact quantum semigroups, as many semigroups generating $\mathbb{Z}$ exist. 
 
 \begin{lemma}
 A morphism taking $(C^*_{red}(S),\Delta_S)$ to $(\mathcal{T},\Delta)$ does not exist.
 \end{lemma}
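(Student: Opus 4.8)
The plan is to assume a morphism $\pi\colon C^*_{red}(S)\to\mathcal{T}$ exists, determine it completely on the generators $T_2,T_3$ of $C^*_{red}(S)$, and then contradict the ideal structure of the Toeplitz algebra. Since $T_2$ and $T_3$ are isometries in $C^*_{red}(S)$ with $\Delta_S(T_j)=T_j\otimes T_j$, commutativity of diagram (\ref{mor}) gives $\Delta(\pi(T_j))=(\pi\otimes\pi)\Delta_S(T_j)=\pi(T_j)\otimes\pi(T_j)$ for $j=2,3$, where $\pi(T_2),\pi(T_3)$ are isometries in $\mathcal{T}=C^*_{red}(\mathbb{Z}_+)$. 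Applying Lemma \ref{isom} inside $\mathcal{T}$ (whose generating monomials are the powers $T^c$ of the one-sided shift $T$) yields $\pi(T_2)=T^m$ and $\pi(T_3)=T^n$ for some $m,n\in\mathbb{Z}_+$.

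Next I would use the one numerical relation available. In $S$ we have $2+2+2=3+3$, so $T_2^3=T_3^2$ in $C^*_{red}(S)$; applying $\pi$ gives $T^{3m}=T^{2n}$, and since distinct powers of the non-unitary isometry $T$ are distinct (evaluate at $e_0$), this forces $3m=2n$. Hence $m=2t$, $n=3t$ for some $t\in\mathbb{Z}_+$; in particular $m\le n$.

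The contradiction comes from a single element. As $T_2T_2^*$ and $T_3T_3^*$ are the orthogonal projections of $l^2(S)$ onto $\overline{\operatorname{span}}\{e_b:b\in 2+S\}$ and $\overline{\operatorname{span}}\{e_b:b\in 3+S\}$, the element $x:=(I-T_2T_2^*)T_3T_3^*\in C^*_{red}(S)$ is the projection onto $\overline{\operatorname{span}}\{e_b:b\in(3+S)\setminus(2+S)\}$; since $3\in 3+S$ while $3\notin 2+S$, this set equals $\{3\}$, so $x$ is the rank-one projection onto $\CC e_3$ and is nonzero. On the other hand $\pi(x)=(I-T^mT^{*m})T^nT^{*n}=(I-P_{\ge m})P_{\ge n}$, with $P_{\ge k}$ the projection of $l^2(\mathbb{Z}_+)$ onto $\overline{\operatorname{span}}\{e_j:j\ge k\}$; as $m\le n$ this is $0$. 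Thus $\ker\pi$ is a nonzero closed ideal of $C^*_{red}(S)\cong\mathcal{T}$ containing the nonzero compact operator $x$, hence it contains the commutator ideal $K$ (which, being the ideal of compact operators in the Toeplitz picture, is simple). Then $\pi$ factors through $C^*_{red}(S)/K\cong C(G)$ by Lemma \ref{fakt}, so the range of $\pi$ is commutative; but $\pi(T_2)=T^{2t}$ is normal only for $t=0$, which forces $\pi(T_a)=I$ for all $a\in S$. This trivial homomorphism has range $\CC\subsetneq\mathcal{T}$, hence does not carry $(C^*_{red}(S),\Delta_S)$ onto $(\mathcal{T},\Delta)$, and we conclude that no such morphism exists.

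The comfortable part is the first two paragraphs: Lemma \ref{isom} together with $2+2+2=3+3$ pin $\pi$ down to the one-parameter family $T_2\mapsto T^{2t}$, $T_3\mapsto T^{3t}$. The main obstacle is the last step — checking that $x$ genuinely is a nonzero compact element of $C^*_{red}(S)$ (which uses the explicit identification $C^*_{red}(S)\cong\mathcal{T}$ and the action on the basis $\{e_b\}_{b\in S}$), and correctly disposing of the degenerate value $t=0$; should the lemma instead be read as asserting the nonexistence of an \emph{isomorphism}, that case disappears and the statement also follows at once from the corollary to Theorem \ref{cat}, since $S$ requires two semigroup generators whereas $\mathbb{Z}_+$ requires one.
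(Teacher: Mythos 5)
Your argument is correct and follows essentially the same route as the paper's proof: reduce via Lemma \ref{isom} (equivalently Theorem \ref{cat}) to a semigroup morphism $\phi\colon S\to\mathbb{Z}_+$ with $3\phi(2)=2\phi(3)$, then exhibit a nonzero finite-rank projection in $\ker\pi$ --- you use $(I-T_2T_2^*)T_3T_3^*$ (the projection onto $\CC e_3$) where the paper uses $I-T_3^*T_2T_2^*T_3$ (the projection onto $\CC e_0$) --- to conclude $K\subseteq\ker\pi$. You in fact go one step beyond the printed proof, which stops at that inclusion: you note that $\pi$ must then factor through the commutative quotient $C(G)$, forcing $\pi$ to be the trivial character and hence not surjective, and you rightly flag that the lemma has to be read as excluding surjective morphisms (or isomorphisms), since under the paper's literal definition of morphism the trivial unital character does satisfy diagram (\ref{mor}).
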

 \begin{proof}
  Suppose, $\pi$ is a morphism of the corresponding compact quantum semigroups. Then, by theorem \ref{cat}, there exists  a morphism $\phi\colon S\to\mathbb{Z}_+$. Take $m=\phi(2)$, $k=\phi(3)$. We have
  $$m+m+m=\phi(2+2+2)=\phi(3+3)=k+k.$$
  Therefore, $m$ is even and $k>m$. Consequently, $\pi(T_2^*T_3)$ is an isometric operator, which is not true for $T_2^*T_3$. This means that $\pi(I-T_3^*T_2T_2^*T_3)=0$, and the commutator ideal (the idea of compact operators) is a kernel of homomorphism $\pi$.
 \end{proof}
 
\vskip 0.5cm This research was partially supported by RFBR grant no 12-01-97016.
 
\bibliographystyle{amsalpha}\bibliography{marat}

\end{document}